\def\ppen{\penalty 300 }
\let\col=\colon
\def\colon{\col\ppen}
\theoremstyle{plain} 
\newtheorem{res}{Result}
\newtheorem{thm}{Theorem}[section]
\newtheorem{prop}[thm]{Proposition}
\newtheorem{lem}[thm]{Lemma}
\newtheorem{cor}[thm]{Corollary}
\theoremstyle{definition}
\newtheorem{defn}[thm]{Definition}
\newtheorem{rem}[thm]{Remark}
\newtheorem{ex}[thm]{Example}
\newtheorem{quest}[thm]{Question}
\numberwithin{equation}{section}
\renewcommand{\theta}{\vartheta}
\renewcommand{\phi}{\varphi}
\renewcommand{\epsilon}{\varepsilon}
\renewcommand{\subset}{\subseteq}
\renewcommand{\supset}{\supseteq}
\newcommand{\N}{\mathbb N}
\newcommand{\Z}{\mathbb Z}
\newcommand{\R}{\mathbb R}
\newcommand{\C}{\mathbb C}
\newcommand{\F}{\mathcal F}
\newcommand{\staralg}{\mathop{\rm\ast\mathchar `\-alg}}
\DeclareMathOperator{\Mor}{Mor}
\DeclareMathOperator{\id}{id}
\DeclareMathOperator{\Aut}{Aut}
\DeclareMathOperator{\spanlin}{span}
\DeclareMathOperator{\Tr}{Tr}
\newcommand{\Cat}{\mathscr{C}}
\newcommand{\RCat}{\mathfrak{C}}
\newcommand{\Lin}{\mathscr{L}}
\newcommand{\Pair}{\mathsf{Pair}}
\newcommand{\NCPair}{\mathsf{NCPair}}
\newcommand{\Part}{\mathsf{Part}}
\newcommand{\NCPart}{\mathsf{NCPart}}
\newcommand{\Bipart}{\mathsf{Bipart}}
\newcommand{\NCBipart}{\mathsf{NCBipart}}
\newcommand{\NCBipartEven}{\mathsf{NCBipartEven}}
\newcommand{\Had}{\mathsf{Had}}
\newcommand{\NCHad}{\mathsf{NCHad}}
\newcommand{\Mat}{\mathsf{Mat}}
\newcommand{\Hilb}{\mathsf{Hilb}}
\newcommand{\Neg}{\mathscr{N}}
\newcommand{\GHadamard}{\Graph{\draw (1,0) -- (1,0.5) \Gmapsc{} -- (1,1);}}
\newcommand{\GGr}{\mathfrak{G}}
\newcommand{\HGr}{\mathfrak{H}}
\newcommand{\Olg}{\mathscr{O}}
\newcommand{\Alg}{\mathscr{A}}
\newcommand{\CTr}{\mathop{\mathsf{Tr}}}
\newcommand{\Ctrans}{^\mathsf{T}}
\newcommand{\tiltimes}{\mathbin{\tilde\times}}
\newcommand{\spider}{\Graph{\draw (3,0) -- (3,0.5) node{} -- (3,1);\draw (1,0) -- (1,0.2) -- (3,0.5); \draw (2,0) -- (2,0.2) -- (3,0.5); \draw (5,0) -- (5,0.2) -- (3,0.5); \draw (1,1) -- (1,0.8) -- (3,0.5); \draw (2,1) -- (2,0.8) -- (3,0.5); \draw (5,1) -- (5,0.8) -- (3,0.5); \node[draw=none,fill=none] at (4,0.1) {$\dots$}; \node[draw=none,fill=none] at (4,0.9) {$\dots$};}}
\newcommand{\TLid}{
\Graph{
\draw (0.9,0) -- (0.9,1);
\draw (1.1,0) -- (1.1,1);
}
}
\newcommand{\TLfork}{
\Graph{
\draw (0.9,0) .. controls (0.9,0.5) and (1.4,0.5) .. (1.4,1);
\draw (2.1,0) .. controls (2.1,0.5) and (1.6,0.5) .. (1.6,1);
\draw (1.1,0) .. controls (1.1,0.5) and (1.9,0.5) .. (1.9,0);
}
}
\newcommand{\LTfork}{
\Graph{
\draw (1.1,0) .. controls (1.1,0.5) and (1.6,0.5) .. (1.6,1);
\draw (1.9,0) .. controls (1.9,0.5) and (1.4,0.5) .. (1.4,1);
\draw (0.9,0) .. controls (0.9,0.5) and (2.1,0.5) .. (2.1,0);
}
}
\newcommand{\TLconnecter}{
\Graph{
\draw (0.9,0) -- (0.9,1);
\draw (2.1,0) -- (2.1,1);
\draw (1.1,0) .. controls (1.1,0.5) and (1.9,0.5) .. (1.9,0);
\draw (1.1,1) .. controls (1.1,0.5) and (1.9,0.5) .. (1.9,1);
}
}
\newcommand{\LTconnecter}{
\Graph{
\draw (1.1,0) -- (1.1,1);
\draw (1.9,0) -- (1.9,1);
\draw (0.9,0) .. controls (0.9,0.5) and (2.1,0.5) .. (2.1,0);
\draw (0.9,1) .. controls (0.9,0.5) and (2.1,0.5) .. (2.1,1);
}
}
\newcommand{\TLpair}{
\Graph{
\draw (0.9,0) .. controls (0.9,0.6) and (2.1,0.6) .. (2.1,0);
\draw (1.1,0) .. controls (1.1,0.4) and (1.9,0.4) .. (1.9,0);
} 
}
\begin{document}
\title{Quantum symmetries of Hadamard matrices}
\author{Daniel Gromada}
\address{Czech Technical University in Prague, Faculty of Electrical Engineering, Department of Mathematics, Technická 2, 166 27 Praha 6, Czechia}
\email{gromadan@fel.cvut.cz}
\thanks{I would like to thank István Heckenberger for drawing my attention to the book \cite{CK17} (and its authors for writing it). I thank to Simon Schmidt for pointing out a mistake in an earlier version of the manuscript as well as for discussions about graph quantum isomorphisms. I also thank to the referee of this article for careful reading and pointing out several things to improve and correct. Finally, I thank to Moritz Weber for inspiring discussions about Hadamard matrices and their symmetries.}
\thanks{This work was supported by the project OPVVV CAAS CZ.02.1.01/0.0/0.0/16\_019/0000778}
\date{\today}
\subjclass{20G42 (Primary); 05C25, 18M25, 18M40 (Secondary)}
\keywords{Hadamard matrix, Hadamard graph, quantum group, complementary spiders}

\begin{abstract}
We define quantum automorphisms and isomorphisms of Hadamard matrices. We show that every Hadamard matrix of size $N\ge 4$ has quantum symmetries and that all Hadamard matrices of a fixed size are mutually quantum isomorphic. These results pass also to the corresponding Hadamard graphs. We also define quantum Hadamard matrices acting on quantum spaces and bring an example thereof over matrix algebras.
\end{abstract}

\maketitle
\section*{Introduction}
The original motivation for this work and the main tool used here is a certain diagrammatic category or diagrammatic calculus developed recently independently in several different contexts. In this article, the category is denoted by $\Bipart_N=\langle\Gfork,\Gwhite\Gfork,\crosspart,\pairpart\rangle$ and the generators $\Gfork$ and $\Gwhite\Gfork$ are called \emph{complementary spiders}. The diagrammatic calculus for complementary spiders was first developed by Coecke and Duncan in the area of categorical quantum mechanics \cite{CD07,CK17}. The main object of our focus is, however, the subcategory $\NCBipartEven_N=\langle\Gconnecter,\Gwhite\Gconnecter,\pairpart\rangle\subset\Bipart_N$, which was recently introduced in \cite{GD4} in an attempt to find a liberated quantum group analogue of Coxeter groups of type $D$. In \cite{GD4}, a supposedly new quantum group $D_4^+$ is defined, which is a non-classical liberation of $D_4$ (the Coxeter group of type $D$, not the dihedral group). Although this group looked new and somewhat free, we are actually going to show here that it is isomorphic to $SO_4^{-1}$ (the anticommutative deformation of $SO_4$). The representation category of $SO_4$ (which is isomorphic, but not monoidally isomorphic to the representation category of $SO_4^{-1}$) was also recently independently constructed in \cite{CE23}.

In this article we view diagrammatic categories from the perspective of quantum groups. Quantum groups form the analogue of groups in non-commutative geometry. According to the Woronowicz--Tannaka--Krein theorem \cite{Wor88}, representations of quantum groups form a certain monoidal $\dag$-category and, conversely, any such category gives rise to a quantum group. Our goal for this article is to interpret the diagrammatic category $\NCBipartEven_N$ in terms of quantum groups. More precisely, we are going to interpret the quantum groups associated to $\NCBipartEven_N$ as quantum symmetries of certain classical objects.

It turns out that these objects are Hadamard matrices. Moreover, note that there is a concept of Hadamard graphs, which are constructed in such a way that their symmetries correspond to symmetries of Hadamard matrices. It turns out that this also holds for quantum symmetries, so we can model these using $\NCBipartEven_N$ as well. The following summarizes the results of Sections \ref{secc.hadamard}, \ref{secc.graphs}.

\begin{res}\label{res.1}
Let $H$ be a Hadamard matrix of size $N$. The representation category of its quantum automorphism group can be modelled by $\NCBipartEven_N=\langle\Gconnecter,\Gwhite\Gconnecter,\pairpart\rangle$. We can say the same about the quantum automorphism group of the associated looped Hadamard graph. For $N\ge 4$, it is a proper quantum group, not a group, so Hadamard matrices and Hadamard graphs have genuine quantum symmetries.
\end{res}

As was pointed out to the author by Simon Schmidt and also the referee of this article, there are Hadamard matrices, whose classical automorphism group is\footnote{See http://neilsloane.com/hadamard/. For instance, matrices 28.7, 28.8, 28.9 have automorphism group $\Z_2$.} $\Z_2$. Our result shows that they must have quantum symmetries. There is a famous open problem in the field of quantum automorphisms of graphs whether there is a graph with trivial automorphism group, but non-trivial quantum automorphism group. This example brings us very close by solving \cite[Problem~3.10]{Web23}.

The word \emph{modelled} used in Result~\ref{res.1} means that there is an appropriate fibre functor $F_H$ for every Hadamard matrix $H$, which surjectively maps $\NCBipartEven_N$ to the representation category of the quantum symmetry group. Moreover, the diagrammatic category has enough ``reduction rules'' so that every closed diagram can be reduced to a number. As a result, every such fibre functor must be also injective up to negligible morphisms. This has a remarkable consequence: For fixed $N$, the representation categories of quantum automorphism groups of all Hadamard matrices are monoidally equivalent. In the language of quantum isomorphisms:

\begin{res}[Theorems \ref{T.hadamard}, \ref{T.graphs}]\leavevmode
\begin{enumerate}
\item All Hadamard matrices of a fixed size are mutually quantum isomorphic.
\item All Hadamard graphs of a fixed size are mutually quantum isomorphic.
\end{enumerate}
\end{res}

Shortly before this paper was finished, the result on quantum isomorphism of Hadamard graphs was independently obtained by Chan and Martin \cite{CM22}. In their work, it appears as a consequence of a more general result on association schemes.

The notion of \emph{quantum isomorphism} of graphs actually first came from quantum information theory \cite{AMR19} and it is currently quite a popular topic in both quantum information and quantum groups. Some examples of pairs of graphs, which are not isomorphic, but are quantum isomorphic are known already \cite{AMR19,RS22,Sch22,MRV19}. Nevertheless, as far as we know, this is the first known example of more than two graphs being mutually quantum isomorphic.

Motivated by the results above, we also define the quantum version of Hadamard matrices and Hadamard graphs. If $X$ is a finite quantum space (that is, a special Frobenius $*$-algebra or, equivalently, a finite-dimensional C*-algebra equipped with a certain state), then a quantum Hadamard matrix is a linear map $H\colon l^2(X)\to l^2(X)$ satisfying certain properties. We also bring an example of such a Hadamard matrix:

\begin{res}[Example~\ref{E.transpose}]
Consider the finite quantum space $M_n$ given by the algebra of all $n\times n$ matrices. Then the transposition $a\mapsto a^{\rm T}$ taken as a linear map $l^2(M_n)\to l^2(M_n)$ is a normalized quantum Hadamard matrix.
\end{res}

The motivation for introducing quantum Hadamard matrices is that our results for classical Hadamard matrices hold for the quantum ones as well. Their quantum automorphism group is again given by $\langle\Gconnecter,\Gwhite\Gconnecter,\pairpart\rangle$, we can again define the corresponding quantum Hadamard graphs and two Hadamard matrices/graphs are quantum isomorphic if and only if the underlying finite quantum spaces are quantum isomorphic.

We also studied the category $\NCBipartEven_N$ on its own and we were able to determine its structure as a certain cartesian product of Temperley--Lieb categories. As a consequence, we have that the fibre functor on $\NCBipartEven_N$ is not only injective up to negligible morphisms, but truly injective. This is because there are actually no negligible morphisms (if $N\ge 4$).

\begin{res}[Theorem~\ref{T.Catalan}, Corollary~\ref{C.NoNegligible}]
The category $\NCBipartEven_N$ is isomorphic (but not monoidally isomorphic) to $\NCPair_{\sqrt N}\times\NCPair_{\sqrt N}$, where $\NCPair$ denotes the Tem\-perley--Lieb category of all non-crossing pairings. Consequently, any fibre functor on $\NCBipartEven_N$ is injective for $N\ge 4$.
\end{res}


The article is structured as follows. In Section~\ref{sec.prelim}, we recall preliminary information about diagrammatic categories. We also introduce the category $\NCBipartEven$, which lies at the centre of this work and study its basic properties. In Section~\ref{sec.Had}, we recall what a Hadamard matrix is and generalize this definition to the quantum setting and also to the purely categorical setting. In Section~\ref{sec.cat}, we study the structure of the category $\NCBipartEven$. Section~\ref{sec.qg} contains some preliminaries regarding quantum groups and quantum symmetries. In Section~\ref{sec.qgH}, we study the quantum groups related to $\NCBipartEven$ and show that they must be non-classical. Finally, in Section~\ref{sec.main} we show the main result of this article regarding quantum symmetries of Hadamard matrices. In the end, we mention some concluding remarks in Section~\ref{sec.conclusion}.

Readers that are mostly interested in details on the quantum isomorphisms of Hadamard matrices and graphs are advised to browse through Section~\ref{sec.prelim} and then skip directly to Section~\ref{sec.main}.

\section{Diagrammatic categories}
\label{sec.prelim}

Diagrammatic categories are the main tool for this article. Since this article is connecting three different areas of mathematics: quantum groups, quantum information theory, and diagrammatic categories, we decided to make this introductory section slightly more detailed. Actually, although all the mentioned results are known to experts, many of them are not stated anywhere in the literature in this form.


\subsection{Diagrammatic categories}

In this work, a \emph{category} will always be a rigid monoidal $\dagger$-category over $\C$ with the set of natural numbers $\N_0$ as the set of objects. Such a category $\Cat$ is called \emph{concrete} if it is realized by linear maps between finite-dimensional vector spaces, i.e.\ $\Cat(k,l)\subset\Lin((\C^N)^{\otimes k},(\C^N)^{\otimes l})$ for some $N\in\N$. We will denote by $\Mat$ the \emph{category of matrices}, i.e.\ the category with morphism spaces $\Mat(k,l)$ consisting of all linear maps $\C^k\to\C^l$ (i.e.\ the object 1 is identified with $\C$). In addition, we will denote by $\Mat_N$ the full subcategory of $\Mat$ given by $\Mat_N(k,l)=\Lin((\C^N)^{\otimes k},(\C^N)^{\otimes l})\simeq\Mat(N^k,N^l)$.

Loosely speaking a concrete category is a set of matrices $\Cat$ which is closed under matrix multiplication (whenever possible), tensor product, conjugate transposition (denoted by $\dag$) and which contains certain \emph{duality morphisms}. In contrast, an abstract category is just a set of abstract vector spaces $\Cat(k,l)$ equipped with some abstract multiplication rules (called composition and tensor product) and an involution~$\dag$ satisfying some axioms. We may then look for concrete realizations of the abstract category as concrete categories. Such a realization -- that is, a functor $F\colon\Cat\to\Mat$ -- is then called a \emph{fibre functor}.

Specifying a number $N\in\N$, we can start with a couple of linear maps $A,B,C,\dots$ between some tensor powers of $\C^N$ and ask what is the smallest category $\Cat\subset\Mat_N$ containing those. We will denote this category by $\langle A,B,C,\dots\rangle_N$. 

Now it is convenient to illustrate the elements of such a category using pictures. The generators $A$, $B$, $C,\dots$ are denoted by some boxes and the copies of the vector space $\C^N$ are denoted by strings. For instance, if $A\colon \C^N\to \C^N\otimes\C^N$, we may denote it by $\Graph{\draw (0.7,0.5) -- (0.7,1.5); \draw (1.3,0.5) -- (1.3,1.5);\draw (1,-0.5) -- (1,0.5) node[fill=white, rectangle]{$\scriptstyle A$};}$ (all diagrams are to be read from bottom to top\footnote{There is unfortunately no agreement in the literature on in which direction the diagrams should be read. In the theory of diagrammatic categories related to quantum groups, the diagrams are usually drawn from top to bottom. But since we want to make friends in the categorical QIT community, we will draw everything from bottom to top. Nevertheless, also left to right or right to left directions can be found in the literature.}). On the other hand, if $B\colon \C^N\otimes\C^N\to\C^N$, then it can be drawn as $\Graph{\draw (0.7,0.5) -- (0.7,-0.5); \draw (1.3,0.5) -- (1.3,-0.5);\draw (1,1.5) -- (1,0.5) node[fill=white, rectangle]{$\scriptstyle B$};}$. Finally, we can compute the composition $AB$ or $BA$, whose diagrams are $\Graph{\draw (0.7,1) -- (0.7,2); \draw (1.3,1) -- (1.3,2);\draw (0.7,0) -- (0.7,-1); \draw (1.3,0) -- (1.3,-1);\draw (1,0) node[fill=white,rectangle]{$\scriptstyle B$} -- (1,1) node[fill=white, rectangle]{$\scriptstyle A$};}$ and $\Graph{\draw (0.7,0)--(0.7,1);\draw (1.3,0) -- (1.3,1);\draw (1,1) node[fill=white, rectangle]{$\scriptstyle B$} -- (1,2);\draw (1,0) node[fill=white,rectangle]{$\scriptstyle A$} -- (1,-1);}$, respectively, or their tensor product $A\otimes B$ or $B\otimes A$ with diagrams $\Graph{\draw (0.7,0.5) -- (0.7,1.5); \draw (1.3,0.5) -- (1.3,1.5);\draw (1,-0.5) -- (1,0.5) node[fill=white, rectangle]{$\scriptstyle A$};}\Graph{\draw (0.7,0.5) -- (0.7,-0.5); \draw (1.3,0.5) -- (1.3,-0.5);\draw (1,1.5) -- (1,0.5) node[fill=white, rectangle]{$\scriptstyle B$};}$, $\Graph{\draw (0.7,0.5) -- (0.7,-0.5); \draw (1.3,0.5) -- (1.3,-0.5);\draw (1,1.5) -- (1,0.5) node[fill=white, rectangle]{$\scriptstyle B$};}\Graph{\draw (0.7,0.5) -- (0.7,1.5); \draw (1.3,0.5) -- (1.3,1.5);\draw (1,-0.5) -- (1,0.5) node[fill=white, rectangle]{$\scriptstyle A$};}$. In the end, every element of the category $\langle A,B,C,\dots\rangle_N$ is made by a finite amount of compositions, tensor products, and linear combinations from the generators and their involutions and hence every element can be represented by a linear combination of such pictures.

As we mentioned already, we want to deal with rigid categories. This means that the category $\Cat$ contains (usually among its generators) a \emph{duality morphism} $R\colon\C\to\C^N\otimes\C^N$. We usually use the diagrammatic notation of a \emph{cup} $R=\uppairpart$ and a \emph{cap}\footnote{This is actually a bit more restrictive than necessary. For rigidity, we need duality morphisms $R=\uppairpart$, $R\Ctrans=\pairpart$ satisfying the snake equation. But there is in general no reason to assume $R\Ctrans=R^\dag$. Nevertheless, all categories mentioned in this article will satisfy this additional condition.} $R^\dag=\pairpart$. We will use this cup and cap notation also in the case of abstract diagrammatic categories. As an axiom, the duality morphism satisfies the \emph{snake equation}
$\Graph{\draw (1,0) -- (1,1) -- (2,1) -- (2,0) -- (3,0) -- (3,1);}=
\idpart=
\Graph{\draw (1,1) -- (1,0) -- (2,0) -- (2,1) -- (3,1) -- (3,0);}$.
In the concrete categories, the duality morphism will usually be given by
\begin{equation}\label{eq.R}
R=\uppairpart=\sum_{i=1}^n e_i\otimes e_i.
\end{equation}
Note that this concrete $R$ is, in addition, \emph{symmetric}, so $\Graph{\draw (1,1) -- (2,0.3) -- (2,0) -- (1,0) -- (1,0.3) -- (2,1);}=\uppairpart$. The duality morphism $R\in\Cat(0,2)$ already induces a duality morphism $R_k\in\Cat(0,2k)$ for arbitrary $k$ by $R_k=
\Graph{
	\draw (1,1) -- (1,0) -- (6,0) -- (6,1);
	\draw (1.5,1) -- (1.5,0.2) -- (5.5,0.2) -- (5.5,1);
	\draw (3,1) -- (3,0.5) -- (4,0.5) -- (4,1);
	\node[draw=none,fill=none] at (2.25,.8) {$\scriptstyle\dots$};
	\node[draw=none,fill=none] at (4.75,.8) {$\scriptstyle\dots$};
}
=
(\id_{k-1}\otimes R\otimes\id_{k-1})(\id_{k-2}\otimes R\otimes\id_{k-2})\cdots R
$.

Given any rigid category $\Cat$ and a morphism $A\in\Cat(1,1)$, we can define its (left) transposition or (left) trace as
$$A\Ctrans=
\Graph{\draw (1,-.5) -- (1,1.5) -- (2,1.5) -- (2,0.5) node[fill=white,rectangle]{$\scriptstyle A$} -- (2,-.5) -- (3,-.5) -- (3,1.5);}
,\qquad
\CTr A=
\Graph{\draw (1,-.5) -- (1,0.5) node[fill=white,rectangle]{$\scriptstyle A$} -- (1,1.5) -- (2,1.5) -- (2,-.5)--cycle;}.
$$
If $\Cat$ is a concrete category and the duality morphism is given by \eqref{eq.R}, this resembles the standard notion of a matrix transposition and trace, which will be denoted by $A^{\rm T}$, $\Tr A$. On the other hand, if the duality morphism is not symmetric, then these notions actually do not satisfy the expected properties (transposition is not involutive, trace is not tracial and so on). The definition can be generalized to arbitrary element $A\in\Cat(k,l)$ using the duality morphisms $R_k$, $R_l$ instead of the simple cups and caps (we need $k=l$ for the trace).

Choosing the generators in a convenient way, we may be able to formulate some reduction rules, which then give rise to a \emph{diagrammatic calculus}. The best case scenario happens if the diagrammatic calculus is powerful enough such that the reduced diagrams form a basis of our category. However, this does not happen very often, so we usually require a weaker condition, see Def.~\ref{D.pure}. But first, a concrete example comes in handy.

\subsection{Spiders and partitions}

For $k,l\in\N_0$, we define the diagram $\spider$ with $k$ inputs and $l$ outputs to denote the tensor $T_{k,l}\colon (\C^N)^{\otimes k}\to(\C^N)^{\otimes l}$ whose entries are given by the Kronecker delta: $[T_{k,l}]_{i_1\cdots i_k}^{j_1\cdots j_l}=\delta_{i_1\cdots i_kj_1\cdots j_l}$. For $k=l=0$, we define $T_{0,0}=N$. We will call these morphisms \emph{black spiders}.

First, note that $\Gpair$ and $\Guppair$ satisfy the snake equation $\Graph{\draw (1,0) -- (1,0.8) -- (1.5,1) node{} -- (2,0.8) -- (2,0.2) -- (2.5,0) node{} -- (3,0.2) -- (3,1);}=\idpart=\Graph{\draw (1,1) -- (1,0.2) -- (1.5,0)node{} -- (2,0.2) -- (2,0.8) -- (2.5,1)node{} -- (3,0.8) -- (3,0);}$, so it can be used as a duality morphism. In fact, it actually coincides with the duality morphism from Eq.~\eqref{eq.R}. So, we will denote $\pairpart:=\Gpair$ and $\uppairpart:=\Guppair$.

Secondly, black spiders are symmetric, closed under the involution and can be fused together. That is, they satisfy the following reduction rules:
\begin{equation}\label{eq.black}
\Graph{\node at (1,0.5){};}=N,\qquad
\Big(\underbrace{\overbrace{
\Graph{
	\draw (3,0) -- (3,0.5) node{} -- (3,1);
	\draw (1,0) -- (1,0.2) -- (3,0.5);
	\draw (2,0) -- (2,0.2) -- (3,0.5);
	\draw (5,0) -- (5,0.2) -- (3,0.5);
	\draw (1,1) -- (1,0.8) -- (3,0.5);
	\draw (2,1) -- (2,0.8) -- (3,0.5);
	\draw (5,1) -- (5,0.8) -- (3,0.5);
	\node[draw=none,fill=none] at (4,0.1) {$\dots$};
	\node[draw=none,fill=none] at (4,0.9) {$\dots$};
}}^l}_k\Big)^\dag
=
\underbrace{\overbrace{\Graph{
	\draw (3,0) -- (3,0.5) node{} -- (3,1);
	\draw (1,0) -- (1,0.2) -- (3,0.5);
	\draw (2,0) -- (2,0.2) -- (3,0.5);
	\draw (5,0) -- (5,0.2) -- (3,0.5);
	\draw (1,1) -- (1,0.8) -- (3,0.5);
	\draw (2,1) -- (2,0.8) -- (3,0.5);
	\draw (5,1) -- (5,0.8) -- (3,0.5);
	\node[draw=none,fill=none] at (4,0.1) {$\dots$};
	\node[draw=none,fill=none] at (4,0.9) {$\dots$};
}}^k}_l,\qquad
\Graph{
	\draw (1,1.5) -- (1,1.3) -- (3,1) node{} -- (1, 0.7) -- (1,-0.5);
	\node[draw=none,fill=none,anchor=base] at (2,1.4) {$\overbrace{\dots}^{l_1}$};
	\draw (3,1.5) -- (3,1.3) -- (3,1);
	\node[draw=none,fill=none,anchor=base] at (2,-0.5) {$\underbrace{\dots}_{k_1}$};
	\draw (3,-0.5) -- (3,0.8) -- (3,1);
	\draw (7,1.5) -- (7,0.3) -- (7,0) node{} -- (7,-0.3) -- (7,-0.5);
	\node[draw=none,fill=none,anchor=base] at (8,1.4) {$\overbrace{\dots}^{l_2}$};
	\draw (9,1.5) -- (9,0.3) -- (7,0);
	\node[draw=none,fill=none,anchor=base] at (8,-0.5) {$\underbrace{\dots}_{k_2}$};
	\draw (9,-0.5) -- (9,-0.3) -- (7,0);
	\draw (3,1) -- (4,0.7) -- (4,0.3) -- (7,0);
	\draw (3,1) -- (6,0.7) -- (6,0.3) -- (7,0);
	\node[draw=none,fill=none,anchor=base] at (5,0.5) {$\overbrace{\vrule height 5pt width 0pt\dots}^m$};
}
=
\underbrace{\overbrace{\Graph{
	\draw (3,0) -- (3,0.5) node{} -- (3,1);
	\draw (1,0) -- (1,0.2) -- (3,0.5);
	\draw (2,0) -- (2,0.2) -- (3,0.5);
	\draw (5,0) -- (5,0.2) -- (3,0.5);
	\draw (1,1) -- (1,0.8) -- (3,0.5);
	\draw (2,1) -- (2,0.8) -- (3,0.5);
	\draw (5,1) -- (5,0.8) -- (3,0.5);
	\node[draw=none,fill=none] at (4,0.1) {$\dots$};
	\node[draw=none,fill=none] at (4,0.9) {$\dots$};
}}^{l_1+l_2}}_{k_1+k_2}
\end{equation}
\begin{equation}\label{eq.blacksym}
\Graph{
	\draw (1,-0.5) -- (1,0.2) -- (3.5,0.5) node{} -- (1,0.8) -- (1,1);
	\draw (4,-0.5) -- (4,-0.3) -- (3,0) -- (3,0.2) -- (3.5,0.5);
	\draw (3,-0.5) -- (3,-0.3) -- (4,0) -- (4,0.2) -- (3.5,0.5);
	\draw (6,-0.5) -- (6,0.2) -- (3.5,0.5);
	\draw (3,1) -- (3,0.8) -- (3.5,0.5);
	\draw (4,1) -- (4,0.8) -- (3.5,0.5);
	\draw (6,1) -- (6,0.8) -- (3.5,0.5);
	\node[draw=none,fill=none] at (2,-0.2) {$\dots$};
	\node[draw=none,fill=none] at (2,0.9) {$\dots$};
	\node[draw=none,fill=none] at (5,-0.2) {$\dots$};
	\node[draw=none,fill=none] at (5,0.9) {$\dots$};
}
=
\Graph{
	\draw (1,0) -- (1,0.2) -- (3.5,0.5) node{} -- (1,0.8) -- (1,1);
	\draw (3,0) -- (3,0.2) -- (3.5,0.5);
	\draw (4,0) -- (4,0.2) -- (3.5,0.5);
	\draw (6,0) -- (6,0.2) -- (3.5,0.5);
	\draw (3,1) -- (3,0.8) -- (3.5,0.5);
	\draw (4,1) -- (4,0.8) -- (3.5,0.5);
	\draw (6,1) -- (6,0.8) -- (3.5,0.5);
	\node[draw=none,fill=none] at (2,0.1) {$\dots$};
	\node[draw=none,fill=none] at (2,0.9) {$\dots$};
	\node[draw=none,fill=none] at (5,0.1) {$\dots$};
	\node[draw=none,fill=none] at (5,0.9) {$\dots$};
}
\end{equation}

As a consequence, any connected diagram made out of spiders equals to a single spider. More generally, any (possibly unconnected) diagram made out of spiders, i.e.\ any element $T\in\langle T_{k,l}\mid k,l\in\N_0\rangle_N$ can be reduced to a \emph{partition} of the $k$ inputs and $l$ outputs. In fact, strictly speaking, if we only allow tensor products, compositions, and involutions for spiders, we can never obtain a \emph{crossing} in our diagram, so we can obtain exactly all \emph{non-crossing partitions}. If we add the diagram $\crosspart$, then we can indeed obtain any partition.

\begin{defn}
Consider arbitrary $N\in\C$. The \emph{category of all partitions} $\Part_N$ is the category with morphism spaces spanned by diagrams of (possibly crossing) strings and black spiders subject to the relations \eqref{eq.black}, \eqref{eq.blacksym}. The \emph{category of all non-crossing partitions} $\NCPart_N$ is the category with morphism spaces spanned by diagrams of non-crossing strings and black spiders subject to the relations \eqref{eq.black}. For $N\in\N$, we will denote by $F_N$ the fibre functor $\Part_N\to\Mat_N$ interpreting the black spideres in the \emph{standard} way as described above: $F_N(\spider)=T_{k,l}$.
\end{defn}

Note that there may be a subtle difference between the abstract categories $\Part_N$ or $\NCPart_N$ and the concrete categories generated by the linear maps: the functor $F_N$ interpreting the categories may not be injective. That is, the associated linear maps may satisfy some additional relations that cannot be algebraically derived from the above mentioned ones.

\begin{defn}\label{D.pure}
A category $\Cat$ is called \emph{pure} if $\Cat(0,0)\simeq\C$.
\end{defn}

In case of diagrammatic categories, being pure means that all diagrams with no inputs and outputs can be reduced to a number (scalar multiple of an empty diagram). That is, there is no reduced diagram with no inputs and outputs. In contrast, concrete categories are always pure.

If a diagrammatic category is pure, it means that we essentially know all the relations.

\begin{prop}\label{P.pure}
Let $\Cat$ be a pure category. Then for every non-trivial fibre functor $F\colon \Cat\to\Mat$, we have $\ker F=\Neg$, where
\begin{equation}\label{eq.neg}
\Neg(k,l)=\{a\in\Cat(k,l)\mid \CTr(ab)=0 \text{ for every }b\in\Cat(l,k)\}
\end{equation}
is the tensor ideal of \emph{negligible morphisms}.
\end{prop}
\begin{proof}
Denote $A=F(a)$, $B=F(b)$. The assignment $(A,B)\mapsto F(\CTr(a^\dag b))$ is essentially the Hilbert--Schmidt inner product, which must be positive definite. (If the duality morphism $F(\uppairpart)$ is given by \eqref{eq.R}, then $F(\CTr(a^\dag b))=\Tr(A^\dag B)$, so it is exactly the Hilbert--Schmidt product. Nevertheless, it is surely positive definite in general as we have $\CTr(a^\dag b)=\tilde a^\dag\tilde b$, where $\tilde a=(a\otimes\id_k)R_k$, $\tilde b=(b\otimes\id_l)R_l$.) Hence, every fibre functor must map all negligible morphisms to zero. On the other hand if $F(a)=0$ for some $a\not\in\Neg$, then also $F(a^\dag a)=0$, where $a^\dag a$ is a non-zero element of $\Cat(0,0)\simeq\C$. But this means that $F$ maps everything to zero.  (In general, $\Neg$ is always the largest proper tensor ideal in $\Cat$ \cite{Bru00}. See also \cite{GW03}.)
\end{proof}

\begin{rem}
\label{R.FibreExistence}
Formula~\eqref{eq.neg} is the standard way how negligible morphisms are defined in the literature. But provided that some fibre functor on $\Cat$ actually exists, we can identify negligible morphisms in a simpler way as
$$\Neg(k,l)=\{a\in\Cat(k,l)\mid \CTr(a^\dag a)=0\}.$$
Indeed, since the Hilbert--Schmidt inner product is positive definite and hermitean, the sesquilinear form $\langle a,b\rangle=\CTr(a^\dag b)$ must be positive semidefinite and hermitean. Then by Cauchy--Schwarz inequality, we have that $\CTr(a^\dag a)=0$ implies $\CTr(ab)=0$ for any $b$.

Conversely, given a diagrammatic category $\Cat$, we can prove that there is no fibre functor on $\Cat$ if we show that the form is not positive semidefinite. For instance, using the results from \cite{Tut93,Jun19}, we can show this way that there is no fibre functor on $\NCPart_N$ unless $N=4\cos^2(\pi/l)$, $l\in\N$, $l\ge 3$ or $N\ge 4$.
\end{rem}

\begin{rem}
Both $\Part_N$ and $\NCPart_N$ are obviously pure. In fact, $\NCPart_N$ has non-trivial negligible morphisms only for $N=4\cos^2(j\pi/l)$, $j=1,\dots,l-1$, $l\in\N$ \cite[Prop.~3.12]{FM21} (see also \cite{Tut93,Jun19}). In particular, it has no negligible morphisms for $N\ge 4$. So, any fibre functor on $\NCPart_N$ is injective if $N\ge 4$.
\end{rem}

The category of all partitions was probably first defined in \cite{Mar94}. In \cite{Jon94} it was shown that it models the representation category of the symmetric group $S_N$. The name \emph{spider} comes from categorical quantum mechanics, more precisely the theory of \emph{ZX-calculus} invented by Coecke and Duncan \cite{CD07} (see \cite{CK17} for a detailed introduction). The non-crossing version is of a special interest in the theory of quantum groups \cite{BS09} as we are going to sketch in Section~\ref{sec.qg}.

Finally, it is worth mentioning two categories that are even simpler, but still of a great interest: If we forget about spiders and only consider diagrams with strings, we obtain the category of all pairings $\Pair_N$ known as the \emph{Brauer category} (Brauer showed that it models the representation category of the orthogonal group \cite{Bra37}). Even smaller is the category of all non-crossing pairings $\NCPair_N$, which is spanned by string diagrams, where the strings are not allowed to cross. It became famous under the name \emph{Temperley--Lieb category} \cite{TL71,Kau87}.

\subsection{Fibre functors on partitions}
Interesting question: Can we realize the categories $\Part_N$ or $\NCPart_N$ in a different way? That is, is there some alternative fibre functor $\Part_N\to\Mat$?

\begin{prop}\label{P.FibreNCPart1}
Consider $\delta\in\C$. There is a one-to-one correspondence between
\begin{enumerate}
\item fibre functors $F\colon\NCPart_{\delta^2}\to\Mat$,
\item special Frobenius $*$-algebras $\Alg$ with $\eta^\dag\eta=\delta^2$,
\item finite-dimensional C*-algebras $\Alg$ equipped with a $\delta$-form.
\end{enumerate}
\end{prop}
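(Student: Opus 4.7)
The plan is to recover a special Frobenius $*$-algebra from a fibre functor, then invoke the well-known identification of such algebras with finite-dimensional C*-algebras equipped with a $\delta$-form.

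For (1)$\Rightarrow$(2): given a fibre functor $F\colon\NCPart_{\delta^2}\to\Mat$, I set $\Alg:=F(1)$ and read off the algebra structure from the basic spiders, namely $\mu:=F(T_{2,1})$ and $\eta:=F(T_{0,1})$. Since $F$ commutes with $\dag$, the comultiplication $\mu^\dag=F(T_{1,2})$ and counit $\eta^\dag=F(T_{1,0})$ come for free. The spider-fusion rule in \eqref{eq.black}, when specialised to the relevant pairs $(k_1,l_1,k_2,l_2,m)$, yields in turn associativity, unitality, coassociativity, counitality, and the Frobenius law (by comparing the two natural reductions of $T_{2,2}$). The specialness $\mu\circ\mu^\dag=\id_\Alg$ comes from fusing $T_{2,1}$ and $T_{1,2}$ along one edge to obtain $T_{1,1}=\id$. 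The $*$-structure on $\Alg$ is the categorical transposition $a\mapsto a\Ctrans$, which is involutive because the duality morphism $F(\uppairpart)$ is symmetric. Finally, the first relation of \eqref{eq.black} reads $F(T_{0,0})=\delta^2$, which is precisely $\eta^\dag\eta=\delta^2$.

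For (2)$\Rightarrow$(1), I would define $F(k):=\Alg^{\otimes k}$ on objects and send the $k$-to-$l$ black spider to the canonical operator $\Alg^{\otimes k}\to\Alg^{\otimes l}$ built coherently from $\mu,\eta,\mu^\dag,\eta^\dag$. Well-definedness is the content of the spider theorem for special Frobenius algebras: in such an algebra, any connected composition of multiplications, units and their adjoints depends only on the number of inputs and outputs (see \cite{CK17}). Compatibility of this $F$ with composition, tensor product and $\dag$ then follows from the Frobenius axioms directly; in particular, the defining scalar relation $F(T_{0,0})=\delta^2$ is satisfied because $\eta^\dag\eta=\delta^2$.

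The equivalence (2)$\Leftrightarrow$(3) is the standard dictionary between special Frobenius $*$-algebras in $\Hilb$ and finite-dimensional C*-algebras with a $\delta$-form: a C*-algebra $\Alg$ with such a form $\psi$ carries the inner product $\langle a,b\rangle:=\psi(a^*b)$, and its ordinary multiplication together with $\eta(1):=1_\Alg$ then form a special Frobenius $*$-algebra with $\eta^\dag\eta=\psi(1_\Alg)=\delta^2$; conversely, $\psi:=\eta^\dag$ retrieves the $\delta$-form and the $*$-structure supplies positivity. The main obstacle will be the bookkeeping in the first step---making sure that each of the Frobenius, associativity, unit and special axioms genuinely follows from a single instance of the spider-fusion move---and, to a lesser extent, tracking the positivity condition across steps (2) and (3). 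Note that the symmetry relation \eqref{eq.blacksym} is deliberately absent from $\NCPart_N$, so $\Alg$ is not required to be commutative, which matches the full generality of C*-algebras on the third side of the equivalence.
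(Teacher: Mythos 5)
Your proposal follows essentially the same route as the paper: read off $m=F(\Gfork)$ and $\psi=F(\Gupsing)$ from the functor, derive the (special, unital, Frobenius) algebra axioms from the spider-fusion rules, go back by interpreting the $k$-to-$l$ spider as $m_l^\dag m_k$ via the spider theorem, and treat (2)$\Leftrightarrow$(3) as a normalization dictionary ($mm^\dag=\id$, $\eta^\dag\eta=\delta^2$ versus $\eta^\dag\eta=1$, $mm^\dag=\delta^2\id$). One small correction: a $*$-operation must be antilinear, so it cannot be the transposition $a\mapsto a\Ctrans$ alone but must be the transpose of the adjoint, $a^*=(a^\dag\otimes\id)R$ with $R=F(\uppairpart)$, which is exactly what the paper uses and what makes the positivity $\psi(a^*a)=a^\dag a\ge 0$ immediate.
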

The equivalence $(2)\Leftrightarrow(3)$ will be immediately clear after we explain what a Frobenius algebra is. After that, we will prove the equivalence $(1)\Leftrightarrow(2)$. See \cite{Koc03} for a nice introduction to Frobenius algebras including the proof of various equivalent definitions.

\begin{defn}
A \emph{Frobenius algebra} is a finite-dimensional algebra $\Alg$ equipped with a linear functional $\psi$ such that the bilinear form $(a,b)\mapsto\psi(ab)$ is non-degenerate. Working over $\C$, $\Alg$ is called a \emph{Frobenius $*$-algebra} if $\Alg$ is a $*$-algebra and $\psi$ is positive (i.e.\ $\psi(a^*a)\ge 0$ for every $a$).
\end{defn}

Any Frobenius $*$-algebra is equipped with an inner product $\langle a,b\rangle=\psi(a^*b)$. Since it acts on itself by left multiplication, it must actually be a C*-algebra. We will denote by $\dag$ the adjoint of any map $T\colon \Alg^{\otimes k}\to \Alg^{\otimes l}$ with respect to this inner product. Note that $\psi=\eta^\dag$ in that case, where $\eta\colon\C\to \Alg$ is the inclusion of the unit $1\mapsto 1_\Alg$. In the following text, we will also denote by $m\colon\Alg\otimes\Alg\to\Alg$ the multiplication map $m(a\otimes b)=ab$.

\begin{defn}
A Frobenius $*$-algebra\footnote{In the definition of \emph{special}, the dagger can be equivalently replaced by transposition (using the bilinear form) in which case the definition makes sense for arbitrary Frobenius algebras (the $*$-structure is not necessary).} is called
\begin{itemize}
\item \emph{special} if $mm^\dag=\id$,
\item \emph{symmetric} if $\psi$ is tracial i.e.\ if the bilinear form is symmetric.
\end{itemize}
\end{defn}

The notion of a \emph{$\delta$-form} was defined in \cite{Ban02} independently of the theory of Frobenius algebras, but it is indeed essentially the same thing as the special Frobenius structure:

\begin{defn}[\cite{Ban02}]
Let $\Alg$ be a finite-dimensional C*-algebra. A $\delta$-form on $\Alg$ is any state $\psi$ such that in the associated GNS Hilbert space (i.e.\ equipping $\Alg$ with the inner product $\langle a,b\rangle=\psi(a^*b)$) we have $mm^\dag=\delta^2\,\id$.
\end{defn}

Indeed, the subtle difference lies only in the normalization of $\psi$: For a $\delta$-form, we require that it is a \emph{state}, i.e.\ $\|\psi_\text{B}\|=1$ or, equivalently, $\psi_\text{B}(1_\Alg)=\eta^{\dag_\text{B}}\eta=1$, which fixes the normalization of $\psi$ and then we require $mm^{\dag_\text{B}}=\delta^2\id$ for some $\delta$. In contrast, for a special Frobenius algebra, we fix the normalization of $\psi$ by requiring $mm^\dag=\id$ without the $\delta^2$ factor, which in turn means that $\psi$ is cannot be a state. That is, take $\psi_\text{F}=\delta^2\psi_\text{B}$ (so $\eta^{\dag_\text{F}}=\eta^{\dag_\text{B}}$ and hence $\|\psi_\text{F}\|=\eta^{\dag_\text{F}}\eta=\delta^2$). Then one can check that $m^{\dag_\text{F}}=\frac{1}{\delta^2}m^{\dag_\text{B}}$, so $mm^{\dag_\text{F}}=1$. This finishes the proof $(2)\Leftrightarrow(3)$.

Now we have a look on the equivalence $(1)\Leftrightarrow(2)$. First, we observe the following:

\begin{lem}
\label{L.Rm}
Let $\Alg$ be a Frobenius $*$-algbebra. Denote by $\pairpart\colon\Alg\otimes\Alg\to\C$ the associated bilinear form and by $\uppairpart$ its adjoint. Denote also by $\Gfork\colon\Alg\otimes\Alg\to\Alg$ the multiplication on $\Alg$.
\begin{enumerate}
\renewcommand{\theenumi}{\alph{enumi}}
\item The pair $(\uppairpart,\pairpart)$ satisfies the snake equation.
\item The multiplication on $\Alg$ is self-conjugate; more precisely $(\Gfork)^\dag=
\Graph{
\draw (4,-.2) -- (4,1.2) -- (2.5,1.2) -- (2.5,.5) node{} -- (3,.2) -- (3,-.2) -- (0,-.2) -- (0,1.2);
\draw (2.5,.5) -- (2,.2) -- (2,0) -- (1,0) -- (1,1.2);
}$
\end{enumerate}
\end{lem}
\begin{proof}
Denote by $R\colon\C\to\Alg\otimes\Alg$ the adjoint of the bilinear form and by $m\colon\Alg\otimes\Alg\to\Alg$ the multiplication on $\Alg$. Let $(e_i)$ be some orthonormal basis of $\Alg$ and denote by $R^{ij}$, $m_{ij}^k$ the tensor entries of $R$ and $m$ in this basis. First, we claim that $e_i^*=\sum_{j}R^{ij}e_j$. Indeed, since the basis is orthonormal, we can compute the coordinates of $e_i^*$ as $\langle e_j,e_i^*\rangle=\overline{\langle e_i^*,e_j\rangle}=\overline{\psi(e_ie_j)}=\overline{R^\dag(e_i\otimes e_j)}=R^{ij}$.

Now, the statement (a) follows by the fact that $*$ is involutive: $e_i=e_i^{**}=\left(\sum_jR^{ij}e_j\right)^*=\sum_{jk}\bar R^{ij}R^{jk}e_k$, so $\sum_j(R^\dag)_{ij}R^{jk}=\delta_{ik}$, which is exactly the equality $\Graph{\draw (1,0) -- (1,1) -- (2,1) -- (2,0) -- (3,0) -- (3,1);}=\idpart$. The second snake equation is then just a complex conjugate of the first one.

The statement (b) follows by the fact that $*$ is an antihomomorphism:
\begin{align*}
(e_ie_j)^*&=\left(\sum_k m_{ij}^ke_k\right)^*=\sum_{kl}(m^\dag)^{ij}_kR^{kl}e_l,\\
=e_j^*e_i^*&=\sum_{a,b,l}R^{ja}R^{ib}m^l_{ab}e_l.
\end{align*}

If we denote $\Gmerge:=(\Gfork)^\dag$, the equality above can be written as
$\Graph{
\draw (1.5,0.5) -- (2,0.7) -- (2,1);
\draw (3,1) -- (3,0) -- (1.5,0) -- (1.5,0.5) node{} -- (1,0.7) -- (1,1);
}
=
\Graph{
\draw (1.5,0.5) -- (2,0.3) -- (2,0) -- (-1,0) -- (-1,1);
\draw (1.5,1) -- (1.5,0.5) node {} -- (1,0.3) -- (1,.1) -- (0,.1) -- (0,1);
}$. Using the snake equation, it is straightforward to derive the claimed equality.
\end{proof}

Now, let us finally formulate the proof of the equivalence $(1)\Leftrightarrow(2)$ itself.

\begin{proof}
We start with the direction $(1)\rightarrow(2)$. So, let $F$ be some fibre functor $\NCPart_{\delta^2}\to\Mat$. Denote by $N$ the dimension of the object $\Alg:=F(1)$. We define the structure of a Frobenius $*$-algebra on $\Alg$ by taking the multiplication $m:=F(\Gfork)$ and the linear functional $\psi:=F(\Gsing)$. From the diagrammatic rules, it follows that $m$ is indeed associative and that it has the unit $F(\Gupsing)$. For any $a\in \Alg$, we define $a^*$ by transposition of $a^\dag$, i.e.\ $a^*=(a^\dag\otimes\id)R$, where $R=F(\uppairpart)$. Then one can easily derive that $\psi(a^*a)=a^\dag a\ge0$.

For the other direction, start with a Frobenius $*$-algebra $\Alg$ and denote by $m\colon \Alg\otimes \Alg\to \Alg$ the multiplication on $\Alg$ and by $\eta\colon\C\to \Alg$ the inclusion of the unit in $\Alg$. We also denote $R:=m^\dag\eta$, so $R^\dag:=\eta^\dag m$ is the associated bilinear form. We associate the following diagrams 
\begin{align*}
    m&=:\Gfork   &   m^\dag&=:\Gmerge\\
 \eta&=:\Gupsing &\eta^\dag&=:\Gsing\\
    R&=m^\dag \eta=
\Graph{
\draw (1.5,0) node{} -- (1.5,0.5);
\draw (1,1) -- (1,0.7) -- (1.5,0.5) node {} -- (2,0.7) -- (2,1);
}
=:
\Guppair
& R^\dag&=\eta^\dag m=
\Graph{
\draw (1.5,1) node{} -- (1.5,0.5);
\draw (1,0) -- (1,0.3) -- (1.5,0.5) node {} -- (2,0.3) -- (2,0);
}=:\Gpair
\end{align*}

Actually, we can interpret any spider by
$\spider=m_l^{\dag}m_k$, where $m_k$ denotes the $k$-fold product (which is well defined by associativity). Now, one needs to prove that all the reduction rules \eqref{eq.black} are satisfied. We will skip this part here as it would prolong the article inadequately. It is described in a very detailed manner e.g.\ in \cite{Koc03}. We should maybe just comment on the $*$/$\dag$ structure, which is not discussed in \cite{Koc03}. First, we have to show that the duality morphisms are adjoint of each other, that is, prove that $(R,R^\dag)$ indeed satisfy the snake equation. But we did this already in Lemma~\ref{L.Rm}(a). Secondly, we have to prove the reduction rule involving the dagger. This can be done using Lemma~\ref{L.Rm}(b).
\end{proof}

\begin{rem}
For the equivalence $(1)\Leftrightarrow(3)$, see also \cite[Thm.~1]{Ban02}.
\end{rem}


\begin{rem}\label{R.delta}
Every finite-dimensional C*-algebra $\Alg$ (and hence any Frobenius $*$-algebra) can be decomposed as $\Alg=\bigoplus_i M_{n_i}(\C)$. It is well known that any state $\psi$ on $\Alg$ can be expressed as $\psi(a)=\Tr(Qa)$ for some $Q\in \Alg$. Denote $Q=\bigoplus_i Q_i$ according to the decomposition. Then $\psi$ is a $\delta$-form if and only if $\Tr(Q_i^{-1})=\delta^2$ for every $i$ \cite{Ban02}.
\end{rem}

\begin{rem}
There is also a very abstract categorical definition of Frobenius algebras. A \emph{Frobenius monoid} is an object $M$ in an abstract rigid monoidal category such that certain abstract morphisms exist (namely the multiplication $m$, the unit $\eta$, the comultiplication $d$, and the counit $\psi$) satisfying some relations (namely the associativity of $m$, unitality of $\eta$, coassociativity of $d$ counitality of $\psi$, and the so-called Frobenius law). Our definition of a Frobenius algebra then corresponds to a Frobenius monoid in $\Mat$ (or $\Hilb$, the category of finite-dimensional Hilbert spaces).

Adding the $*$-structure in such an abstract setting amounts to requiring that the abstract category is a $\dag$-category. (Which we assume in our article by default.)
\end{rem}

We are often interested in the case, where the duality morphisms are symmetric:
\begin{prop}\label{P.FibreNCPart2}
Consider $\delta\in\C$. There is a one-to-one correspondence between
\begin{enumerate}
\item fibre functors $F\colon\NCPart_{\delta^2}\to\Mat$ such that $F(\pairpart)$ is symmetric,
\item symmetric Frobenius algebras $\Alg$ with $\dim \Alg=\delta^2$,
\item C*-algebras $\Alg$ with $\dim \Alg=\delta^2$.
\end{enumerate}
In particular, such a fibre functor exists if and only if $\delta^2=N\in\N_0$.
\end{prop}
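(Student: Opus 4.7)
The plan is to build directly on Proposition~\ref{P.FibreNCPart1}, which already gives the bulk of the correspondence (without the symmetry condition). By that proposition, a fibre functor $F\colon\NCPart_{\delta^2}\to\Mat$ is the same data as a special Frobenius $*$-algebra $\Alg$ with $\eta^\dag\eta=\delta^2$, equivalently a finite-dimensional C*-algebra $\Alg$ with a $\delta$-form $\psi$. So the only genuinely new content here is: (a) translating symmetry of $F(\pairpart)$ into a condition on $\psi$, and (b) showing that this condition, together with the $\delta$-form/state normalizations, forces $\dim\Alg=\delta^2$.

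For (a), I would use the identifications from the proof of Proposition~\ref{P.FibreNCPart1}: $R=m^\dag\eta$ and $R^\dag=\eta^\dag m$, so that $R^\dag(a\otimes b)=\psi(ab)$. The duality morphism $R$ is symmetric iff this bilinear form is symmetric, i.e.\ iff $\psi(ab)=\psi(ba)$ for all $a,b\in\Alg$. This is exactly the condition that the corresponding Frobenius $*$-algebra is \emph{symmetric} in the sense of the definition in the excerpt. This gives $(1)\Leftrightarrow(2)$ modulo the dimension condition.

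For (b), I would invoke Remark~\ref{R.delta}. Decompose $\Alg=\bigoplus_i M_{n_i}(\C)$ and write $\psi(a)=\Tr(Qa)$ with $Q=\bigoplus_i Q_i$. Traciality of $\psi$ restricted to each simple block $M_{n_i}$ forces $Q_i=\alpha_i I_{n_i}$, and the $\delta$-form condition $\Tr(Q_i^{-1})=\delta^2$ then yields $\alpha_i=n_i/\delta^2$. Requiring $\psi$ to be a state gives
\[
1=\psi(1_\Alg)=\sum_i\Tr(Q_i)=\frac{1}{\delta^2}\sum_i n_i^2=\frac{\dim\Alg}{\delta^2},
\]
so $\dim\Alg=\delta^2$. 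Conversely, for any finite-dimensional C*-algebra $\Alg$ with $\dim\Alg=\delta^2$ the formula $\psi(a):=\delta^{-2}\sum_i n_i\tr(a_i)$ (with $\tr$ the unnormalized matrix trace on each block) is, by the same computation run backwards, a tracial $\delta$-state; moreover it is the unique one. This settles $(2)\Leftrightarrow(3)$ and shows simultaneously that the symmetric Frobenius structure on a given C*-algebra is canonical. The final clause ``$\delta^2=N\in\N_0$'' is then automatic, since the dimension of a finite-dimensional C*-algebra is a non-negative integer and every such $N$ is realized, e.g.\ by $\C^N$.

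I do not expect any serious obstacle: all the heavy lifting was done in Proposition~\ref{P.FibreNCPart1} and Remark~\ref{R.delta}, and only two small checks remain. The only thing that requires care is juggling the two competing normalization conventions ($\eta^\dag\eta=\delta^2$ for special Frobenius algebras vs.\ $\psi(1_\Alg)=1$ for states), which is precisely the interplay that pins down $\dim\Alg=\delta^2$.
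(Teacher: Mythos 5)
Your proposal is correct and follows essentially the same route as the paper: reduce to Proposition~\ref{P.FibreNCPart1}, identify symmetry of $F(\pairpart)$ with traciality of $\psi$, and then use the block decomposition of Remark~\ref{R.delta} together with the competing normalizations ($\Tr(Q_i^{-1})=\delta^2$ vs.\ $\psi(1_\Alg)=1$) to force $Q_i=\frac{n_i}{\delta^2}\id$ and hence $\delta^2=\sum_i n_i^2=\dim\Alg$. The only difference is that you spell out the converse check and the uniqueness of the tracial $\delta$-form slightly more explicitly, which the paper leaves implicit.
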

\begin{proof}
The condition that $F(\pairpart)$ is symmetric is obviously equivalent to saying that the associated Frobenius algebra $\Alg$ is symmetric or that the associated $\delta$-form on the C*-algebra $\Alg$ is tracial. Consider the decomposition of $\Alg$ from Remark~\ref{R.delta}. It is well known that there is a unique trace on every matrix algebra. This means that we may take $Q_i=\frac{n_i}{\delta^2}\,\id$. So, there is actually a unique tracial $\delta$-form on $\Alg$ given by $\psi(a)=\sum_i \frac{n_i}{\delta^2}\Tr_i(a)$. Since $1=\psi(1_\Alg)=\sum_i\frac{n_i^2}{\delta^2}$, we have that $\delta^2=\sum_i n_i^2=\dim \Alg$. See also \cite[Prop~2.1]{Ban99}.
\end{proof}

\begin{rem}
The equivalence $(2)\Leftrightarrow(3)$, where Frobenius algebras are taken in the abstract categorial setting was recently formulated in \cite{Vic11}.
\end{rem}

\begin{rem}\label{R.Mn}
In the case when $\Alg$ is the matrix algebra $\Alg=M_n(\C)$, the corresponding fibre functor $F$ has an interesting diagrammatic interpretation. In this case, we have $\eta^\dag=n\Tr$, the associated inner product is then given by $\langle e_{ij},e_{kl}\rangle=n\delta_{ik}\delta_{jl}$, so we have an orthonormal basis $(\sqrt n\,e_{ij})_{i,j=1}^n$. Hence, $M_n(\C)$ can be identified with $C^n\otimes\C^n$ by $\sqrt{n}e_{ij}\mapsto e_i\otimes e_j$, which provides a convenient diagrammatic description of the (co)multiplication and  (co)unit:
$$
m=\frac{1}{\sqrt n}\,\Graph{
\draw (0.9,0) .. controls (0.9,0.5) and (1.4,0.5) .. (1.4,1);
\draw (2.1,0) .. controls (2.1,0.5) and (1.6,0.5) .. (1.6,1);
\draw (1.1,0) .. controls (1.1,0.5) and (1.9,0.5) .. (1.9,0);
},\quad
m^\dag=\frac{1}{\sqrt n}\,\Graph{
\draw (0.9,1) .. controls (0.9,0.5) and (1.4,0.5) .. (1.4,0);
\draw (2.1,1) .. controls (2.1,0.5) and (1.6,0.5) .. (1.6,0);
\draw (1.1,1) .. controls (1.1,0.5) and (1.9,0.5) .. (1.9,1);
},\quad
\eta=\sqrt n\,\Graph{
\draw (0.9,1) .. controls (0.9,0.7) and (1.1,0.7) .. (1.1,1);
},\quad
\eta^\dag=\sqrt n\,\Graph{
\draw (0.9,0) .. controls (0.9,0.3) and (1.1,0.3) .. (1.1,0);
}.
$$

We can interpret this categorically as follows: Recall the Temperley--Lieb category of all non-crossing pairings $\NCPair_n(k,l)\subset\NCPart_n(k,l)$ consisting of all partitions, where every block has size two. Denote then by $\NCPair'_n$ the full subcategory of $\NCPair_n$ given by restricting to even objects only. Then $\NCPart_{n^2}$ is monoidally isomorphic to $\NCPair'_n$ through
$$
\Gfork\mapsto\frac{1}{\sqrt n}\,\Graph{
\draw (0.9,0) .. controls (0.9,0.5) and (1.4,0.5) .. (1.4,1);
\draw (2.1,0) .. controls (2.1,0.5) and (1.6,0.5) .. (1.6,1);
\draw (1.1,0) .. controls (1.1,0.5) and (1.9,0.5) .. (1.9,0);
},\quad
\Gmerge\mapsto\frac{1}{\sqrt n}\,\Graph{
\draw (0.9,1) .. controls (0.9,0.5) and (1.4,0.5) .. (1.4,0);
\draw (2.1,1) .. controls (2.1,0.5) and (1.6,0.5) .. (1.6,0);
\draw (1.1,1) .. controls (1.1,0.5) and (1.9,0.5) .. (1.9,1);
},\quad
\Gupsing\mapsto\sqrt n\,\Graph{
\draw (0.9,1) .. controls (0.9,0.7) and (1.1,0.7) .. (1.1,1);
},\quad
\Gsing\mapsto\sqrt n\,\Graph{
\draw (0.9,0) .. controls (0.9,0.3) and (1.1,0.3) .. (1.1,0);
}.
$$

See also \cite{KS08,GProj}.
\end{rem}

Finally, if we allow crossings, then from the relation \eqref{eq.blacksym} it follows that the Frobenius algebra / C*-algebra must actually be commutative, so we have the following.

\begin{prop}\label{P.FibrePart}
Any fibre functor $F\colon\Part_N\to\Mat$ such that $F(\crosspart)$ is the flip map is up to a change of basis given by the standard interpretation $F_N$.
\end{prop}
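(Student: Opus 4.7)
The plan is to show that $F$ is determined, up to a unitary change of basis on $F(1)$, by the finite-dimensional commutative C*-algebra it induces, and then to invoke uniqueness of this algebra in dimension $N$. First I would restrict $F$ to $\NCPart_N$. Applying the symmetry relation~\eqref{eq.blacksym} to the cap gives $\pairpart\circ\crosspart=\pairpart$ inside $\Part_N$, so since $F(\crosspart)$ is the flip, the morphism $F(\pairpart)$ is symmetric. By Proposition~\ref{P.FibreNCPart2}, the restriction $F|_{\NCPart_N}$ then corresponds to a finite-dimensional C*-algebra $\Alg=F(1)$ of dimension $N$, equipped with its unique tracial $\delta$-form.

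Next I would exploit the crossing once more. Applying~\eqref{eq.blacksym} to the fork yields $\Gfork\circ\crosspart=\Gfork$, and pushing this through $F$ gives $m\circ\tau=m$, where $m=F(\Gfork)$ is the multiplication on $\Alg$ and $\tau$ is the flip. This is precisely commutativity of $\Alg$, so $\Alg\cong\C^N$ with pointwise product. Moreover, by Remark~\ref{R.delta} the tracial $\delta$-form on $\C^N$ is unique (namely $Q_i=1/N$), so under any $*$-isomorphism $\phi\colon\Alg\to\C^N$ the underlying special symmetric Frobenius $*$-algebra structure coincides with the one underlying the standard interpretation $F_N$.

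Finally I would translate this algebra isomorphism into an isomorphism of fibre functors. Uniqueness of the tracial $\delta$-form forces $\phi$ to be unitary with respect to the GNS inner products, and it intertwines the multiplications and units of the two algebras together with their adjoints. By the construction in the proof of Proposition~\ref{P.FibreNCPart1}, a fibre functor on $\NCPart_N$ is completely determined by these data, so after composing with the change of basis given by $\phi$ the functor $F$ agrees with $F_N$ on every black spider, hence on every morphism of $\NCPart_N$. The remaining generator $\crosspart$ is sent by both $F$ and $F_N$ to the flip, and the flip on $V\otimes V$ is intertwined by $U\otimes U$ for any unitary $U$. Since $\crosspart$ together with the generators of $\NCPart_N$ generates all of $\Part_N$, we conclude $F=F_N$ up to the change of basis.

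The step I expect to require the most care is the final ``functoriality'' argument: pinning down that a $*$-isomorphism of the induced Frobenius $*$-algebras really does extend to a monoidal natural isomorphism of the associated fibre functors, as opposed to merely matching the four generators $\Gfork,\Gmerge,\Gupsing,\Gsing$. This essentially amounts to tracking the constructions in Propositions~\ref{P.FibreNCPart1} and~\ref{P.FibreNCPart2} through and checking naturality; the presence of the crossing causes no additional trouble, as it is the flip on both sides and is automatically intertwined by any basis change of the form $U\otimes U$.
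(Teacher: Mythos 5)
Your argument is correct and follows essentially the same route as the paper: commutativity of the induced C*-algebra from the crossing relation, dimension $N$ via Proposition~\ref{P.FibreNCPart2}, and identification with $\C^N$ carrying its unique tracial $\delta$-form, which pins down the multiplication and counit as those of the standard interpretation $F_N$ in the basis of minimal projections. The paper's proof is simply a terser version of this, writing the change of basis explicitly via the canonical projections $\delta_i$ rather than through an abstract $*$-isomorphism $\phi$.
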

\begin{proof}
As we just said, if we add the crossing to our category and interpret it as the flip map, then the relation
$\Graph{
\draw (2,-.5) -- (1,0) -- (1,0.3) -- (1.5,0.5);
\draw (1,-.5) -- (2,0) -- (2,0.3) -- (1.5,0.5) node{} -- (1.5,1);
}=\Gfork$
implies that the associated C*-algebra $\Alg$ is commutative (and $N$-dimensional by Prop.~\ref{P.FibreNCPart2}). By Gelfand duality, this means that $\Alg\simeq C(X)$ for $X=\{1,\dots,n\}$. Denoting by $(\delta_i)$ the basis of canonical projections $\delta_i(j)=\delta_{ij}$, the multiplication is then given by $\delta_i\delta_j=\delta_{ij}\delta_i$, so $m_{ij}^k=\delta_{ijk}=[F_N(\Gfork)]_{ij}^k$. The unique tracial state is the normalized summation $\psi(f)=\frac{1}{N}\sum_if(i)$. Equivalently, the counit is given by the unnormalized summation $\eta^\dag(f)=\sum_if(i)$. That is $[\eta^\dag]_i=\eta^\dag(\delta_i)=1=[F_N(\Gsing)]_i$.
\end{proof}

\subsection{Complementary spiders}\label{secc.Bipart}
%
Denoting by $(e_i)$ the standard basis of $\C^N$, the black spiders were defined by $T_{k,l}(e_{i_1}\otimes\cdots\otimes e_{i_k})=\delta_{i_1\cdots i_k}e_{i_1}^{\otimes l}$. Now take some other orthonormal basis $(f_i)$ and define $\tilde T_{k,l}(f_{i_1}\otimes\cdots\otimes f_{i_k})=\delta_{i_1\cdots i_k}f_{i_1}^{\otimes l}$ and denote these maps by \emph{white spiders} $\Graph{\draw (1,0) -- (1,0.2) -- (3,0.5); \draw (2,0) -- (2,0.2) -- (3,0.5); \draw (5,0) -- (5,0.2) -- (3,0.5); \draw (1,1) -- (1,0.8) -- (3,0.5); \draw (2,1) -- (2,0.8) -- (3,0.5); \draw (5,1) -- (5,0.8) -- (3,0.5); \node[draw=none,fill=none] at (4,0.1) {$\dots$}; \node[draw=none,fill=none] at (4,0.9) {$\dots$};\draw (3,0) -- (3,0.5) node[fill=white]{} -- (3,1);}$. Those will obviously satisfy the same relations. Now what happens if a black spider meets the white one?

A basis $(f_i)$ of $\C^N$ is called \emph{self-conjugate} if all the basis vectors have real entries, i.e.\ $\langle e_i,f_j\rangle\in\R$ for every $i,j=1,\dots,N$, where $(e_i)$ is the standard basis. Two orthonormal bases $(e_i)$ and $(f_j)$ are called \emph{mutually unbiased} if $|\langle e_i,f_j\rangle|=\frac{1}{\sqrt N}$ for every $i,j$. So, both conditions together mean that $\langle e_i,f_j\rangle=\pm\frac{1}{\sqrt N}$. It is straightforward to derive the following reduction rules for spiders corresponding to self-conjugate orthogonal mutually unbiased bases \cite[Theorem~9.40]{CK17}:
\begin{equation}\label{eq.bipart}
\uppairpart:=\Guppair={\Gwhite\Guppair},\qquad
\Graph{
	\draw[double] (1,0.2) -- (1,0.8);
	\draw (1,1.2) -- (1,0.8) node{};
	\draw (1,-0.2) -- (1,0.2) node[fill=white]{};
}=\frac{1}{N}\Graph{
	\draw (1,1.2) -- (1,0.8) node{};
	\draw (1,-0.2) -- (1,0.2) node[fill=white]{};
}
\end{equation}

We denote the corresponding abstract diagrammatic categories by
$$\NCBipart_N:=\langle\Gfork,{\Gwhite\Gfork},\pairpart\rangle,\qquad\Bipart_N:=\langle\Gfork,{\Gwhite\Gfork},\pairpart,\crosspart\rangle.$$
To be more precise, diagrams in $\NCBipart_N$ satisfy relations \eqref{eq.black} for both black and white spiders and relations \eqref{eq.bipart} for composing them together. In $\Bipart_N$, we have in addition the relation \eqref{eq.blacksym} for both black and white spiders.

Note again that for the sake of the definition of the abstract category, $N$ can be an arbitrary complex number distinct from zero. Black and white spiders satisfying relations~\eqref{eq.bipart} are sometimes called \emph{complementary} \cite[Def.~9.27]{CK17}. The reason for our notation $\Bipart$ will be clear in a moment. If $H$ is the corresponding transition matrix between bases $(e_i)$ and $(f_j)$ (alternatively, the associated Hadamard matrix, see Section~\ref{sec.Had}), we will denote by $F_H\colon\Bipart_N\to\Mat_N$ the corresponding fibre functor interpreting black and white spiders as described above.

So, what do the elements of these categories actually look like? Well, they are some black and white points connected by some strings. So, they are some graphs equipped with a (possibly defective) two-colouring of vertices and with some additional input/output strings. These inputs and outputs can be formalized as follows: a \emph{bilabelled graph} is a tuple $(G,(a_1,\dots,a_k),(b_1,\dots,b_l))$, where $G$ is a graph and $a_1,\dots,a_k,b_1,\dots,b_l$ are its vertices, where $a_1,\dots,a_k$ stand for the input strings and $b_1,\dots,b_l$ stand for the output strings, see \cite{MR20}.

In principle, the graphs can be arbitrary, even containing loops or multiple edges. But now comes a more important question: What are the reduced diagrams? From relations~\eqref{eq.black}, it follows that reduced diagrams should not contain an edge between two black or two white vertices. So, the two-colouring actually has to be proper (the graphs are actually bipartite). This also means that there are no loops. In addition, the relation \eqref{eq.bipart} means that all multiple edges can be reduced to either a simple edge or no edge. So, the graphs are actually simple. Finally, since we have $\uppairpart:=\Guppair={\Gwhite\Guppair}$, no vertex should have degree two (counting the output strings as well) and since $\Graph{\node at (1,0.5){};}=N=\Graph{\node[fill=white] at (1,0.5){};}$, no vertex should be isolated. On the other hand, it is easy to see that every graph satisfying these conditions is already reduced and can be constructed in $\Bipart_N$. In $\NCBipart_N$, we can only construct non-crossing diagrams (also called \emph{planar} bilabelled graphs, which is a bit stronger than just planarity of the underlying graph, again see \cite{MR20} for a proper definition). To summarize:

\begin{prop}
The elements of the category $\Bipart_N$ can be identified with two-coloured bilabelled graphs, where no vertex has degree zero or two in the above described sense. The category $\NCBipart_N$ can be identified with its subset containing planar bilabelled graphs only.
\end{prop}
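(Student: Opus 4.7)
The plan is to formalize the reduction argument sketched in the paragraph preceding the proposition, and then to argue that the resulting normal form is an invariant of the abstract morphism. Every morphism in $\Bipart_N$ is a finite composition and tensor product of the generators $\Gfork$, $\Gwhite\Gfork$, $\pairpart$, $\crosspart$ and their adjoints. Topologically, such a composite is a multigraph drawn in a horizontal strip, with bichromatic vertices (one colour per spider type), string edges, and some string-ends attached to the bottom and top boundary; the latter provide the input and output labellings. Thus \emph{a priori} each morphism is represented by a bipartite bilabelled multigraph, possibly with loops, multiple edges, and isolated or degree-two vertices.

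To put such a representative into the stated normal form I would apply three passes in order. First, use spider fusion \eqref{eq.black}, separately for black and for white, to collapse every maximal monochromatic connected subgraph into a single spider; after this pass no edge connects two like-coloured vertices, so the graph is bipartite and loop-free. Second, invoke the second part of \eqref{eq.bipart} to remove a pair of parallel edges between a black and a white vertex at the cost of a factor $N^{-1}$; iterated and interleaved with a further fusion pass, this turns every multi-edge into a single edge and so produces a simple graph. Third, eliminate \emph{useless} vertices: an isolated spider evaluates to the scalar $N$ by \eqref{eq.black}, while a spider of degree two is, by the first part of \eqref{eq.bipart} together with the snake equation, equal to the identity wire connecting its two neighbours and so is contracted out. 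The outcome is a bipartite simple bilabelled graph with no vertex of degree $0$ or $2$. In the non-crossing setting the generator $\crosspart$ is unavailable and none of the three passes ever introduces a crossing, so the normal form is planar in the sense of bilabelled graphs; conversely, any such planar graph can be assembled from the generators of $\NCBipart_N$ by a straightforward induction on the number of vertices (first lay out the vertices from left to right, then connect them by non-crossing strings through nested cups and caps).

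The main obstacle is the converse: showing that two distinct reduced graphs represent distinct abstract morphisms, i.e.\ that no hidden relation further identifies them. I would handle this by a fibre functor argument. Take a Hadamard matrix $H$ whose entries are as generic as possible (e.g.\ by passing to an indeterminate Hadamard matrix over a large enough transcendental extension) and consider the concrete realization $F_H\colon\Bipart_N\to\Mat_N$ from the discussion preceding the proposition. By Prop.~\ref{P.pure} the kernel of any fibre functor is exactly the tensor ideal of negligible morphisms, so it is enough to check that distinct reduced graphs $G$ have linearly independent images $F_H(G)\in\Mat_N$. The matrix entries of $F_H(G)$ are polynomials in the entries $H_{ij}$ whose monomial supports read off the black/white incidence structure of $G$; the simplicity and degree constraints on $G$ are precisely what ensures that distinct reduced graphs produce polynomials with distinct supports, hence linearly independent matrices for generic $H$. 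Combined with the reduction above, this yields the asserted identification, and its restriction to planar graphs gives the corresponding statement for $\NCBipart_N$.
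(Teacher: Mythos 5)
Your reduction argument (spider fusion to get bipartiteness and loop-freeness, the second relation of \eqref{eq.bipart} to kill multiple edges, and the scalar/snake relations to kill degree-$0$ and degree-$2$ vertices, plus realizability of every such graph) is exactly the paper's argument, which is given in the paragraph preceding the proposition; that half is fine. The paper stops there: the proposition is only the identification of the spanning diagrams with reduced graphs, and no claim of linear independence of the reduced graphs is made or needed at this point.

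The second half of your proposal, however, does not work. First, you invoke Proposition~\ref{P.pure} to say that $\ker F_H$ is the ideal of negligible morphisms, but that proposition applies only to \emph{pure} categories, and the paper points out immediately after this proposition that neither $\Bipart_N$ nor $\NCBipart_N$ is pure (a closed diagram consisting of one black and one white vertex joined by a single edge cannot be reduced to a scalar), so the hypothesis fails. Second, there is no ``generic'' or ``indeterminate'' Hadamard matrix: the existence of the fibre functor $F_H$ forces the two bases to be self-conjugate and mutually unbiased, i.e.\ $\langle e_i,f_j\rangle=\pm 1/\sqrt N$, so the entries of $H$ are rigidly $\pm1$ and cannot be deformed to transcendentals; and with $\pm1$ entries, distinct monomial supports in the $H_{ij}$ give no linear-independence conclusion. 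Third, the conclusion you are aiming for is actually false in general: already for $N=4$ the subcategory $\NCBipartEven_4$ has nontrivial negligible morphisms (by Theorem~\ref{T.Catalan} it is $\NCPair_2\tiltimes\NCPair_2$, and $\NCPair_2$ is degenerate since $2=4\cos^2(\pi/4)$), so distinct reduced graphs can have linearly dependent images under every $F_H$. If one wanted to show that distinct reduced graphs are distinct \emph{in the abstract category}, the correct route would be a confluence/diamond argument for the rewriting system, not a fibre functor; but for the statement as the paper intends it, the reduction-plus-realizability argument you gave in the first half already suffices.
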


Now, we can again ask about the fibre functors. In the crossing case, it is known that there are no others:

\begin{prop}\label{P.FibreBipart}
Every fibre functor $F\colon\Bipart_N\to\Mat$ such that $F(\crosspart)$ is the flip map is given by a pair of self-conjugate orthogonal complementary bases as described above.
\end{prop}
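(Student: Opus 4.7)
The plan is to reduce the problem to two applications of Proposition~\ref{P.FibrePart}, one for the subcategory of $\Bipart_N$ generated by the black spiders and one for the white, and then to extract self-conjugacy and complementarity from the two relations in~\eqref{eq.bipart}. The subcategory generated by $\{\Gfork,\crosspart\}$ is isomorphic to $\Part_N$: the relations~\eqref{eq.black},~\eqref{eq.blacksym} automatically furnish $\pairpart=\Gpair$ satisfying the snake equation, and nothing else survives from~\eqref{eq.bipart} inside this subcategory. Applying Proposition~\ref{P.FibrePart} to the restriction of $F$ here, I can choose a unitary identification $F(1)\cong\C^N$ so that $F$ sends every black spider to the canonical tensor $T_{k,l}$ in the standard basis $(e_i)$. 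The same argument applied to the subcategory generated by $\{\Gwhite\Gfork,\crosspart\}$ then supplies a second orthonormal basis $(f_j)$ of the same $\C^N$ with respect to which each white spider is interpreted as the analogous $\tilde T_{k,l}$.

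It remains to check that the pair $(e_i),(f_j)$ is self-conjugate and mutually unbiased. Let $H$ be the unitary transition matrix, $f_j=\sum_k H_{kj}\,e_k$. The identity $\Guppair={\Gwhite\Guppair}$ from~\eqref{eq.bipart} reads
\[
\sum_i e_i\otimes e_i \;=\; \sum_j f_j\otimes f_j,
\]
which upon expansion gives $HH\Ctrans=I$; combined with unitarity $H^\dag H=I$, this forces $H\Ctrans=H^\dag$, i.e., $H$ is real, so $(f_j)$ is self-conjugate. Evaluating the second relation of~\eqref{eq.bipart} on a vector $f_j$ (the left-hand side being the black multiplication composed with the white comultiplication, the right-hand side being the white counit followed by the black unit) yields $\sum_k H_{kj}^2\,e_k=\frac{1}{N}\sum_i e_i$, whence $H_{kj}^2=1/N$ for all $k,j$. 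Reality then forces $H_{kj}=\pm 1/\sqrt N$, which is exactly the mutual unbiasedness condition. Consequently $F=F_H$ for this self-conjugate complementary pair of bases.

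The main obstacle is not conceptual but bookkeeping. I need to check carefully that the black-only and white-only subcategories of $\Bipart_N$ really are isomorphic to $\Part_N$ as abstract rigid monoidal $\dag$-categories, so that Proposition~\ref{P.FibrePart} applies with no hidden obstruction coming from the cross-colour relations; and I need to arrange that the two unitary changes of basis produced by the two applications act on the \emph{same} copy of $\C^N$ rather than only matching up through a further intertwiner. Once this set-up is in place, the derivation of self-conjugacy and complementarity from~\eqref{eq.bipart} is the short direct computation above.
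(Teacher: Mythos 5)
Your proposal is correct and follows essentially the same route as the paper: apply Proposition~\ref{P.FibrePart} separately to the black and white spider subcategories to obtain the two orthonormal bases, then extract the conditions on the transition matrix from the relations~\eqref{eq.bipart}. The only cosmetic difference is that the paper reads off both self-conjugacy and unbiasedness at once from the second relation (since $\langle e_i,f_j\rangle^2=1/N>0$ already forces $\langle e_i,f_j\rangle=\pm1/\sqrt N$), so your separate use of the first relation for reality, while valid, is redundant; likewise, your worry about hidden cross-colour relations is harmless, since Proposition~\ref{P.FibrePart} applies to any fibre functor pulled back to $\Part_N$ even if the black subcategory were a proper quotient of it.
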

\begin{proof}
First of all, by Proposition~\ref{P.FibrePart}, the functor $F$ restricted to black spiders $\Part_N=\langle\Gfork,\pairpart\rangle\subset\Bipart_N$ must (up to change of basis) coincide with the standard interpretation $F_N$. The same must hold for the white spiders as $\langle\Gwhite\Gfork,\pairpart\rangle$ is also isomorphic to $\Part_N$. So, denote these two bases $(e_i)$ and $(f_j)$. Suppose that $(e_i)$ is actually the standard basis. It remains to show that $(f_j)$ is self-conjugate and that they are mutually unbiased. But this is true: Recall the notation $T_{k,l}:=\sum_{i=1}^N e_i^{\otimes l}\otimes e_i^{\dag\otimes k}$ and $\tilde T_{k,l}=\sum_{j=1}^n=f_j^{\otimes l}\otimes f_j^{\dag\otimes k}$ for the above described interpretation of black and white spiders. Note that the second relation of \eqref{eq.bipart} says that $T_{2,1}\tilde T_{1,2}=\frac{1}{N}T_{0,1}\tilde T_{1,0}$, so
$$\langle e_i,f_j\rangle^2=(e_i^\dag\otimes e_i^\dag)(f_j\otimes f_j)=e_i^\dag T_{2,1}\tilde T_{1,2}f_j=\frac{1}{N}e_i^\dag T_{0,1}\tilde T_{1,0}f_j=\frac{1}{N},$$
which is all we needed. See also \cite[Thm.~9.40]{CK17}
\end{proof}

Unfortunately, neither of these two categories is pure. That is, we still do not have enough reduction rules. Indeed, for instance the diagram $\Graph{\draw (1,0.5) node{} -- (2,0.5) node[fill=white]{};}$ cannot be further reduced. There are two possible solutions for this problem -- either add more relations or find a suitable subcategory for which the presented relations already are enough. In this work, we will study the second option.

\subsection{Two-coloured graphs with even degrees}
\label{secc.bipart}
We define the category
$$\NCBipartEven_N:=\langle\Gconnecter,{\Gwhite\Gconnecter},\pairpart\rangle\subset\NCBipart_N.$$

This category was recently introduced in \cite{GD4} in an attempt to define a free quantum version of Coxeter groups of type $D$.

\begin{prop}\label{P.NCBipartEven}
The category $\NCBipartEven_N$ can be identified with the set of bilabelled two-coloured graphs that are simple, planar, all vertices have even degree not equal to zero or two (counting the input/output strings as well).
\end{prop}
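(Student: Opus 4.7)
My plan is to verify the set equality by establishing two inclusions, leveraging the analogous characterization of $\NCBipart_N$ stated just above.

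For the forward inclusion ``$\subset$'', I would check that each of the three generators is a bilabelled graph of the described form: $\Gconnecter$ is a single black vertex of degree four, $\Gwhite\Gconnecter$ its white analogue, and $\pairpart$ has no vertex at all. Then I would argue that the two category operations preserve the property ``all vertices have even degree distinct from $0$ and $2$''. Composition and tensor product simply concatenate the underlying bilabelled graphs, and whenever the concatenation produces edges between two same-coloured vertices one must fuse them using the spider relations \eqref{eq.black}. The key point is that fusing two spiders of degrees $2a$ and $2b$ along $r$ shared edges gives a spider of degree $2a+2b-2r$, which is even, and iterated fusion never lowers the degree below $4$ unless the corresponding vertex becomes degree $2$, in which case \eqref{eq.bipart} and the spider relations collapse it to an identity string (so it disappears from the reduced graph). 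In other words, parity is preserved at every step, and this together with the $\NCBipart_N$ characterization supplies simplicity, planarity, bipartiteness, and the exclusion of degrees $0$ and $2$.

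For the reverse inclusion ``$\supset$'', I would take such a graph $G$, which by the previous proposition already lies in $\NCBipart_N$. It then suffices to show that every black (resp.\ white) spider $T_{k,l}$ of even total degree $k+l\ge 4$ can be constructed in $\langle\Gconnecter,\Gwhite\Gconnecter,\pairpart\rangle$. This I would prove by a straightforward induction on $k+l$: the base case $k+l=4$ is a generator up to bending strings with $\pairpart$, $\uppairpart$, and the inductive step uses the spider-fusion relation \eqref{eq.black} in reverse, writing a $2n$-valent spider as the fusion along one internal edge of $\Gconnecter$ with a $(2n-2)$-valent spider. Since the pair $\pairpart$ already is a generator, every morphism of $\NCBipart_N$ whose reduced graph has only even-degree vertices decomposes into the prescribed generators.

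The one subtle point, and what I expect to be the main obstacle, is ensuring that the inductive decomposition remains planar, i.e.\ lives in $\NCBipartEven_N\subset\NCBipart_N$ rather than only in the crossing-allowed version. To handle this I would exploit the planar embedding of $G$: at each vertex $v$ of degree $2n$ the incident edges inherit a cyclic order, and I would split $v$ into a chain of $n-1$ four-valent spiders lined up along this cyclic order, with consecutive spiders joined by a single edge. Since this splitting operates locally inside a small disc around $v$ and respects the cyclic ordering, it can be carried out without introducing crossings, so the resulting diagram is still non-crossing, i.e.\ an element of $\NCBipartEven_N$.
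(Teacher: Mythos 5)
Your proposal is correct and follows essentially the same route as the paper, which merely notes that the extra condition (all degrees even) ``comes simply from the fact that all generators have even degree'' and defers the details to \cite[Prop.~3.20]{GD4}; your parity-preservation argument for ``$\subset$'' and the planar splitting of a $2n$-valent spider into a cyclically ordered chain of $4$-valent ones for ``$\supset$'' are exactly the details being deferred. No gaps.
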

\begin{proof}
The extra condition is that all vertices have even degree, which comes simply from the fact that all generators have even degree. See \cite[Prop.~3.20]{GD4} for more details.
\end{proof}

\begin{prop}
The category $\NCBipartEven_N$ is pure.
\end{prop}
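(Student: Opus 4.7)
The plan is to combine the graph-theoretic description from Proposition~\ref{P.NCBipartEven} with a short Euler's formula argument. An element of $\NCBipartEven_N(0,0)$ is a linear combination of closed diagrams, and after applying all reduction rules each such diagram is encoded by a simple, planar, bipartite graph (without input or output strings) in which every vertex has even degree at least four. In addition, the reduction can leave behind closed cap--cup loops containing no vertex, each contributing a scalar factor $\pairpart\uppairpart = N$. It therefore suffices to show that no non-empty graph satisfying the stated constraints exists.

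The key combinatorial step is an Euler characteristic computation. Working on a connected component $G$ with $V$ vertices, $E$ edges, and $F$ faces, Euler's formula gives $V - E + F = 2$. The degree condition yields $2E = \sum_{v} \deg(v) \geq 4V$, so $V \leq E/2$. Since $G$ is bipartite (no odd cycles) and simple (no digons), every face has length at least four, so $2E \geq 4F$ and hence $F \leq E/2$. Adding these two inequalities gives $V - E + F \leq 0$, which contradicts Euler's formula. Consequently the only reduced diagrams in $\NCBipartEven_N(0,0)$ are disjoint unions of cap--cup circles, each evaluating to the scalar $N$.

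It follows that $\NCBipartEven_N(0,0)$ is spanned by scalars, so it equals $\C$ and the category is pure. The main subtlety, rather than an actual obstacle, is confirming that the identification of Proposition~\ref{P.NCBipartEven} is truly exhaustive on closed diagrams: one must verify that after applying the reduction rules from \eqref{eq.black} and \eqref{eq.bipart} nothing other than a bipartite graph of the stated type (plus free scalar loop factors) can survive. This is precisely the statement of Proposition~\ref{P.NCBipartEven}, so the whole argument reduces to the clean Euler count above.
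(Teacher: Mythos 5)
Your argument is correct and is essentially the paper's own proof: the paper also reduces to showing that a non-trivial simple planar connected bipartite graph with all degrees even cannot have minimum degree $\ge 4$, invoking the standard bound $e\le 2n-4$ (which is exactly your Euler-plus-girth-four count) together with $e\ge\tfrac{1}{2}n\delta$. The only cosmetic difference is that the paper phrases the conclusion as ``every such reduced diagram has a vertex of degree two, hence is not reduced,'' while you phrase it as the nonexistence of a reduced closed diagram other than scalar loops; these are the same contradiction.
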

\begin{proof}
We need to show that every non-trivial diagram with no input/output strings can be further reduced. We do that by showing that every non-trivial planar bipartite graph where all vertices have even degree has at least one vertex of degree two:

Without loss of generality, assume that the graph is connected (and non-trivial). It is well known that planar connected bipartite graphs satisfy the inequality $e\le 2n-4$, where $e$ is the number of edges and $n$ is the number of vertices. Denoting by $\delta$ the minimal degree of the graph, we obviously have $e\ge\frac{1}{2}n\delta$. Consequently $\delta\le 4-8/n$, so $\delta\le 3$. But since we assume that all vertices have even degree, we actually must have $\delta=2$.
\end{proof}

As we already mentioned, the category was interpreted as a representation category of a certain quantum group for $N=4$ in \cite{GD4}. However, Proposition~\ref{P.FibreBipart} gives us many other fibre functors for arbitrary $N$. The goal of this article will be to interpret the corresponding quantum groups.

\section{Hadamard matrices and generalizations}
\label{sec.Had}

\subsection{Hadamard matrices}

Recall that an orthonormal basis $(f_i)$ is self-conjugate and mutually unbiased with the standard basis $(e_i)$ by definition if and only if $\langle e_i,f_j\rangle=\pm 1/\sqrt N$. Thus, multiplying the transition matrix by $\sqrt N$, we obtain the following:

\begin{defn}
\emph{Hadamard matrix} of order $N$ is an $N\times N$ matrix with $\pm 1$ entries $H\in M_N(\{\pm 1\})$ such that its rows (equivalently columns) are mutually orthogonal (i.e.\ $HH^\dag=N\,1_{\C^N}=H^\dag H$).
\end{defn}

\begin{ex}[Walsh matrices]
The following matrix
$$W_1=\begin{pmatrix}1&1\cr1&-1\end{pmatrix}$$
is a Hadamard matrix of size $2\times 2$. Now observe that if $A$ and $B$ are Hadamard matrices, then $A\otimes B$ is a Hadamard matrix. Consequently, we can construct a series of \emph{Walsh matrices} satisfying the Hadamard condition by $W_n=W_1^{\otimes n}$. (The result is a matrix of size $2^n\times 2^n$.)
\end{ex}

\begin{rem}
Actually, $W_1$ is the Fourier transform on $\Z_2$ and hence $W_n$ is the Fourier transform on $\Z_2^n$. These Hadamard matrices have the additional property that multiplying two rows (or columns) entrywise, we get another row (column), which gives the rows (columns) a group structure (namely $\Z_2^n$). General Hadamard matrices do not have this property. There is also a notion of complex Hadamard matrices, where the canonical example is the Fourier transform on arbitrary finite abelian group.
\end{rem}

\subsection{Hadamard morphisms}

Now, let us take the diagrammatic approach to Hadamard matrices. Given a Hadamard matrix $H$, we can denote it by the diagram~$\GHadamard$. The defining properties can be then expressed in the following way:
\begin{equation}\label{eq.HadMor}
\Graph{\draw (1,0) -- (1,0.5) \Gmapscr{} -- (1,1);}:=\Graph{\draw (1,0) -- (1,1) -- (1.5,1) -- (1.5,0.5) \Gmapsc{} -- (1.5,0) -- (2,0) -- (2,1);}=(\GHadamard)^\dag,\qquad
\Graph{
\draw (1.5,-0.5)--(1.5,-0.2)--(1,0.2)--(1,0.5) \Gmapsc{}--(1,0.8)--(1.5,1.2)--(1.5,1.5);
\draw (1.5,-0.2) node {}--(2,0.2)--(2,0.5) \Gmapsc{}--(2,0.8)--(1.5,1.2) node{};
}=
\Graph{\draw (1,0) -- (1,0.3) node{};\draw (1,1)--(1,0.7) node{};},\qquad
\Graph{\draw (1,0) -- (1,0.3) \Gmapsc{} -- (1,0.7) \Gmapscr{} -- (1,1);}
=N\,\idpart=
\Graph{\draw (1,1) -- (1,0.7) \Gmapsc{} -- (1,0.3) \Gmapscr{} -- (1,0);},
\end{equation}

We should probably explain, where the diagrams came from. The most straightforward is the last equation, which indeed just says $HH^\dag=N\,1_{\C^N}=H^\dag H$. The first equation says that $H^{\rm T}=H^\dag$, which equivalently means $H$ is self-conjugated $H=\bar H$, so it has real entries. For the middle one, note first that if black spiders are interpreted the standard way, then
$$
\Graph{
\draw (1,-1)--(1,-0.5)--(0,0)--(0,0.5) node[fill=white,rectangle]{$\scriptstyle A$}--(0,1)--(1,1.5)--(1,2);
\draw (1,-0.5) node {}--(2,0)--(2,0.5) node[fill=white,rectangle]{$\scriptstyle B$}--(2,1)--(1,1.5) node{};
}=A\bullet B,
$$
where $A\bullet B$ denotes the \emph{Schur product} (also known as the Hadamard product) defined entrywise. Hence the middle equation says that $H\bullet H=J$, where $J=\eta\eta^\dag$ is the all-one-matrix. Consequently, it means that the entries of $H$ are just $\pm 1$.

We can make all this abstract by defining the following diagrammatic categories.
$$\NCHad_N:=\langle\Gfork,\GHadamard,\pairpart\rangle,\qquad\Had_N:=\langle\Gfork,\GHadamard,\pairpart,\crosspart\rangle,$$
where the morphism $\GHadamard$ is supposed to satisfy equations~\eqref{eq.HadMor}. We call this morphism a \emph{Hadamard morphism}.

These categories are obviously not pure. For instance, the diagram $\Graph{\draw (1,0) -- (1,0.5) \Gmapsc{} -- (1,1) -- (1.5,1) -- (1.5,0) -- cycle;}$ cannot be reduced.\footnote{Hadamard matrices can indeed have different traces. For instance, all Walsh matrices have trace zero. On the other hand, matrices constructed by the so-called \emph{Payley construction of type I} have only $+1$ on the diagonal. See also the database of Hadamard matrices at \url{http://neilsloane.com/hadamard/}.} Nevertheless, they allow us to formulate an alternative approach to what we presented in Section~\ref{secc.bipart}.

\begin{prop}\label{P.FunctorHad}
There are functors $\NCBipart_N\to\NCHad_N$ and $\Bipart_N\to\Had_N$ acting trivially on black spiders and mapping ${\Gwhite\Gfork}\mapsto N^{-3/2}
\Graph{
\draw (1.5,0.5) -- (2,0.3) -- (2,0) \Gmapsc{} -- (2,-.3);
\draw (1.5,1.3) -- (1.5,1) \Gmapscr{} -- (1.5,0.5) node {} -- (1,0.3) -- (1,0) \Gmapsc{} -- (1,-.3);
}$.
\end{prop}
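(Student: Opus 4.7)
The plan is to use the presentation of $\NCBipart_N$ by the generators $\Gfork$, $\Gwhite\Gfork$, $\pairpart$ together with the defining relations: the black-spider identities \eqref{eq.black}, the analogous identities for the white spiders, the snake equation for the pairing, and the complementarity relations \eqref{eq.bipart} (with the additional symmetry relation \eqref{eq.blacksym} in the case $\Bipart_N \to \Had_N$). A functor is then uniquely determined by its values on generators, provided these values satisfy the images of all defining relations.

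The black-spider relations and the snake equation are preserved trivially, since $\Gfork$ and $\pairpart$ are fixed. For the white-spider relations one first derives the images of the remaining white morphisms (comultiplication, unit, counit) as Hadamard conjugates of the corresponding black ones, with appropriate powers of $N^{-1/2}$. Any composition of these images then consists of a black-spider diagram sandwiched between alternating $\Gmapsc$'s and $\Gmapscr$'s on each string. Whenever two white morphisms meet along a strand, the adjacent $\Gmapsc\,\Gmapscr$ (or $\Gmapscr\,\Gmapsc$) pair collapses to $N\,\idpart$ by the third equation of \eqref{eq.HadMor}, so the whole composition reduces to a power of $N$ times a pure black-spider diagram, on which the black-spider relations \eqref{eq.black} do the rest. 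The normalization $N^{-3/2}$ is precisely what makes these scalar factors cancel correctly.

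For the complementarity relations \eqref{eq.bipart}: the first, $\uppairpart = \Gwhite\Guppair$, reduces to an application of $H^\dag H = N\,\idpart$ (third equation of \eqref{eq.HadMor}) to absorb the Hadamards that appear in the image of the white cup. The second, equating the bipartite ``double-strand bridge'' with $N^{-1}$ times the disconnected product of a white and a black node, is the genuinely Hadamard-specific step: after substituting the formulas for $\Gwhite\Gfork$ and its dagger and contracting the Hadamards that cancel, the diagram unfolds into the Schur-product configuration $H \bullet H$ hidden inside a black fork--cofork pair, which equals $J = \eta^\dag\eta$ by the middle equation of \eqref{eq.HadMor}.

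The main obstacle is the verification of this second complementarity relation, which essentially uses the defining property of Hadamard matrices (entries $\pm 1$) and not merely the unitarity $HH^\dag = N\,\idpart$; this is precisely the step where a general pair of complementary unitary bases, rather than a Hadamard matrix, would fail. The passage to the crossed categories $\Bipart_N \to \Had_N$ is then routine: the white symmetry relation \eqref{eq.blacksym}, after cancelling the Hadamards in the standard way, reduces to the black symmetry relation, which holds in $\Had_N$ by construction.
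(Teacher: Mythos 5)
Your proposal is correct and follows essentially the same route as the paper: verify the relations on generators, observe that the image of any white spider is a black spider decorated with normalized Hadamard morphisms on each leg so that composition produces $HH^\dag=N\,\id$ cancellations absorbed by the $N^{-1/2}$ normalizations, and check the two complementarity relations using $H^\dag H=N\,\id$ and the Schur-product identity $H\bullet H=\eta\eta^\dag$ respectively. The only detail worth adding is that the first complementarity relation also invokes $H^{\rm T}=H^\dag$ (realness of $H$) in order to slide one of the two $H^\dag$'s around the cup before cancelling.
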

\begin{proof}
Since the categories $\NCBipart_N$ and $\Bipart_N$ are defined by generators and relations, it is enough to check that the same relations are satisfied by the images. First, since black spiders are mapped to black spiders, there is nothing to check regarding these relations. Secondly, we need to check that also the images of white spiders satisfy the relations for spiders. Here, it is straightforward to check that an image of any white spider is just a black spider of the same type with $N^{-1/2}\GHadamard$ connected to every input and $N^{-1/2}\Graph{\draw (1,0) -- (1,0.5) \Gmapscr{} -- (1,1);}$ connected to every output. Then, one can check that indeed all the relations are satisfied since when performing the composition, the extra normalized Hadamard morphisms cancel out. Finally, we need to check the compatibility conditions~\eqref{eq.bipart}. This is indeed also satisfied since
\[{\Gwhite\Guppair}\mapsto N^{-1}\Graph{\draw (1,1) -- (1,0.5) \Gmapscr{} -- (1,0) -- (2,0) -- (2,0.5) \Gmapscr{} -- (2,1);}=\uppairpart,\qquad
\Graph{
	\draw[double] (1,0.2) -- (1,0.8);
	\draw (1,1.2) -- (1,0.8) node{};
	\draw (1,-0.2) -- (1,0.2) node[fill=white]{};
}\mapsto
N^{-3/2}\Graph{
\draw (1.5,-0.8) -- (1.5,-0.6) \Gmapsc{} --(1.5,-0.2)--(1,0.2)--(1,0.5) \Gmapscr{}--(1,0.8)--(1.5,1.2)--(1.5,1.5);
\draw (1.5,-0.2) node {}--(2,0.2)--(2,0.5) \Gmapscr{}--(2,0.8)--(1.5,1.2) node{};
}=
N^{-3/2}\Graph{
	\draw (1,1.2) -- (1,0.8) node{};
	\draw (1,-0.5) -- (1,-0.2) \Gmapsc{} -- (1,0.2) node{};
}
\mathrel{\reflectbox{$\mapsto$}}
N^{-1}\Graph{
	\draw (1,1.2) -- (1,0.8) node{};
	\draw (1,-0.2) -- (1,0.2) node[fill=white]{};
}.
\qedhere\]
\end{proof}

\subsection{Quantum Hadamard matrices}
\label{secc.qhad}

We would like to study some additional fibre functors for $\NCBipart$ which do not extend to $\Bipart$. As already follows from Proposition~\ref{P.FibreNCPart1}, these will be based on some special Frobenius $*$-algebras. Note that special Frobenius $*$-algebras (or C*-algebras equipped with a $\delta$-form) are sometimes called \emph{finite quantum spaces}. We usually denote by $C(X)$ the $*$-algebra and by $l^2(X)$ the associated Hilbert space, where (as in case of quantum groups) $X$ denotes the abstract (non-existent) underlying quantum space. Note that some authors restrict only to symmetric Frobenius $*$-algebras.

Let $X$ be a quantum space and consider a linear map $A\colon l^2(X)\to l^2(X)$. Recall that we denote by $A^\dag$ its adjoint. We also denote by $A^*$ its \emph{conjugation}, i.e.\ the adjoint transposed
$$
A^*=(\id\otimes R^\dag)(\id\otimes A^\dag\otimes\id)(R\otimes\id)=
\Graph{\draw (1,-0.5) -- (1,1.3) -- (1.5,1.5) node{} -- (2,1.3) -- (2,0.5) node[fill=white,rectangle] {$\scriptstyle A^\dag$} -- (2,-0.3) -- (2.5,-0.5) node{} -- (3,-0.3) -- (3,1.5);}.
$$
We can generalize the Schur product of matrices to the setting of quantum spaces by
$$
A\bullet B=m(A\otimes B)m^\dag=
\Graph{
\draw (1,-1)--(1,-0.5)--(0,0)--(0,0.5) node[fill=white,rectangle]{$\scriptstyle A$}--(0,1)--(1,1.5)--(1,2);
\draw (1,-0.5) node {}--(2,0)--(2,0.5) node[fill=white,rectangle]{$\scriptstyle B$}--(2,1)--(1,1.5) node{};
},
$$
where in this case the spiders stand for the (co)multiplication in the Frobenius algebra.

\begin{defn}
Let $X$ be a quantum space with $\delta^2:=\eta^\dag\eta$. A \emph{quantum Hadamard matrix} is a linear map $H\colon l^2(X)\to l^2(X)$ such that 
$$H=H^*,\qquad H\bullet H=\eta\eta^\dag,\qquad HH^\dag=\delta^2\id=H^\dag H.$$
\end{defn}

Note that the defining relations exactly correspond to relations~\eqref{eq.HadMor}.

\begin{rem}
There are two approaches to constructing quantum analogues of classical matrices $A\colon\C^N\to\C^N$. If $A$ has some combinatorial flavour, i.e.\ can be seen as acting on a finite space $X=\{1,\dots,N\}$, we can replace this finite space $X$ by a finite quantum space and obtain $A\colon l^2(X)\to l^2(X)$. This is the approach we took here. In a similar way \emph{quantum graphs} are defined \cite{MRV18}. The second approach is keeping the underlying space $X$ or the vector space $\C^N$, but considering $A$ as a matrix with non-commutative entries, i.e.\ $A\in M_N(\C)\otimes\Alg$ for some C*-algebra $\Alg$. A typical example for this are \emph{quantum groups} (see Section~\ref{sec.qg}). Hadamard matrices with non-commutative entries were recently defined and studied by Banica in \cite{Ban18}.
\end{rem}

\begin{prop}\label{P.FibreHad}
There is a one-to-one correspondence between
\begin{enumerate}
\item fibre functors $F\colon\NCHad_{\delta^2}\to\Mat$,
\item finite quantum spaces $X$ with $\eta^\dag\eta=\delta^2$ equipped with a quantum Hadamard matrix $H$.
\end{enumerate}
\end{prop}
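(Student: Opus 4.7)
The plan is to bootstrap Proposition~\ref{P.FibreNCPart1}, exploiting the fact that $\NCHad_{\delta^2}=\langle\Gfork,\GHadamard,\pairpart\rangle$ differs from $\NCPart_{\delta^2}=\langle\Gfork,\pairpart\rangle$ only by the single extra generator $\GHadamard$ subject to the three relations~\eqref{eq.HadMor}. So a fibre functor on $\NCHad_{\delta^2}$ should amount to a fibre functor on $\NCPart_{\delta^2}$ together with a compatible interpretation of $\GHadamard$.

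For the direction $(1)\Rightarrow(2)$, I would start with a fibre functor $F\colon\NCHad_{\delta^2}\to\Mat$ and restrict it to the subcategory $\NCPart_{\delta^2}$. By Proposition~\ref{P.FibreNCPart1}, this restriction corresponds to a special Frobenius $*$-algebra $C(X):=F(1)$, i.e.\ a finite quantum space $X$ with $\eta^\dag\eta=\delta^2$. I would then set $H:=F(\GHadamard)\colon l^2(X)\to l^2(X)$. It remains to check that the defining relations~\eqref{eq.HadMor} translate, under $F$, into exactly the three algebraic conditions $H=H^*$, $H\bullet H=\eta\eta^\dag$, and $HH^\dag=\delta^2\,\id=H^\dag H$. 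This is immediate from the diagrammatic interpretations of conjugation and Schur product introduced just before Definition of a quantum Hadamard matrix (which are themselves merely rewritings of the same diagrams).

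For the converse direction $(2)\Rightarrow(1)$, I would take a finite quantum space $X$ with $\eta^\dag\eta=\delta^2$ together with a quantum Hadamard matrix $H$. Proposition~\ref{P.FibreNCPart1} supplies a fibre functor $F_0\colon\NCPart_{\delta^2}\to\Mat$ with $F_0(1)=l^2(X)$. I would extend $F_0$ to $F\colon\NCHad_{\delta^2}\to\Mat$ by additionally setting $F(\GHadamard):=H$ and extending freely via tensor products, compositions, and involutions. Well-definedness reduces to verifying that $F$ respects the three relations~\eqref{eq.HadMor}; but that is exactly the content of the three defining properties of a quantum Hadamard matrix.

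These two constructions are mutually inverse essentially tautologically: going $(1)\to(2)\to(1)$ recovers $F$ because $\NCHad_{\delta^2}$ is generated by $\Gfork$, $\pairpart$, $\GHadamard$, on which the reconstructed functor visibly agrees with the original; the round trip $(2)\to(1)\to(2)$ likewise returns the same $(X,H)$ by construction. I do not expect any real obstacle: the only slightly delicate point is the faithful dictionary between the three diagrams in~\eqref{eq.HadMor} and the three algebraic conditions in the definition of a quantum Hadamard matrix, which was already made explicit in the paragraph following~\eqref{eq.HadMor} for the classical case and carries over verbatim once $m$ and $\eta$ are interpreted in the general Frobenius $*$-algebra $C(X)$ rather than in $C(\{1,\dots,N\})$.
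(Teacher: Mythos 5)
Your proposal is correct and is exactly the argument the paper has in mind: the paper's own proof is the single line ``Directly from the definition,'' since $\NCHad_{\delta^2}$ is presented by the generators $\Gfork$, $\pairpart$, $\GHadamard$ subject to the spider relations plus \eqref{eq.HadMor}, and the quantum Hadamard conditions were defined precisely to be the algebraic translations of \eqref{eq.HadMor}. Your write-up simply makes explicit the reduction to Proposition~\ref{P.FibreNCPart1} for the black-spider part and the generators-and-relations argument for the extra generator, which is the same (and here essentially the only) route.
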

\begin{proof}
Directly from the definition.
\end{proof}

\begin{cor}\label{C.FibreNCBipart}
Any finite quantum space $X$ equipped with a quantum Hadamard matrix $H$ induces a fibre functor $F\colon\NCBipart_{\delta^2}\to\Mat$ by
$$F(\pairpart)=R^\dag,\qquad F(\Gfork)=m,\qquad F(\Gwhite\Gfork)=H^{-1}m(H\otimes H),$$
where $m$ is the multiplication in $C(X)$ and $R^\dag$ is the bilinear form on $C(X)$.
\end{cor}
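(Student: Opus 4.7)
The plan is to obtain $F$ as the composition of two already-established constructions, rather than checking each defining relation of $\NCBipart_{\delta^2}$ by hand. By Proposition~\ref{P.FunctorHad}, there is a functor $\Phi\colon \NCBipart_{\delta^2}\to\NCHad_{\delta^2}$ that is the identity on black spiders and interprets the white fork as a black fork decorated by Hadamard morphisms on its two inputs and by $\Graph{\draw (1,0) -- (1,0.5) \Gmapscr{} -- (1,1);}$ on its output, with a scalar normalization. Although Proposition~\ref{P.FunctorHad} is phrased for a natural number $N$, its proof only verifies diagrammatic identities and therefore goes through verbatim with $N$ replaced by the abstract parameter $\delta^2\in\C^\times$.

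Next, Proposition~\ref{P.FibreHad} supplies, from the data $(X,H)$, a fibre functor $F_H\colon \NCHad_{\delta^2}\to\Mat$ determined by $F_H(\pairpart)=R^\dag$, $F_H(\Gfork)=m$, and $F_H(\GHadamard)=H$. Setting $F:=F_H\circ\Phi$ then automatically gives a fibre functor $\NCBipart_{\delta^2}\to\Mat$ with no further relations to verify, since the substantive work was already absorbed into the construction of $\Phi$. The asserted values $F(\pairpart)=R^\dag$ and $F(\Gfork)=m$ are tautological from this composition.

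It remains to unpack $F(\Gwhite\Gfork)$. Substituting $F_H$ into the image $\Phi(\Gwhite\Gfork)$ converts the diagram into the linear map $H^\dag\circ m\circ (H\otimes H)$, dressed by the normalization constant from Proposition~\ref{P.FunctorHad}. Applying $HH^\dag=\delta^2\,\id$, so that $H^\dag=\delta^2H^{-1}$, and collecting factors against the special Frobenius normalization $mm^\dag=\id$, one rewrites this as $H^{-1}m(H\otimes H)$, as required.

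No genuinely difficult step is involved: the one-to-one correspondence of Proposition~\ref{P.FibreHad} and the normalization bookkeeping in Proposition~\ref{P.FunctorHad} together do all the work. The only point requiring care is tracking the scalar factors through the composition so that the closed-form expression $H^{-1}m(H\otimes H)$ emerges exactly.
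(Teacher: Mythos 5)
Your proof is exactly the paper's proof: the entire argument given there for Corollary~\ref{C.FibreNCBipart} is the one-line observation that $F$ is the composition of the fibre functor of Proposition~\ref{P.FibreHad} with the functor of Proposition~\ref{P.FunctorHad}, and your remark that the latter works verbatim for arbitrary $\delta^2\in\C^\times$ is consistent with how the paper treats the abstract categories.

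One caveat on the step you yourself single out as ``the only point requiring care'': the scalar bookkeeping does not close the way you claim. The composition gives $\delta^{-3}H^\dag m(H\otimes H)$, and since $H^\dag=\delta^2H^{-1}$ this is $\delta^{-1}H^{-1}m(H\otimes H)$, not $H^{-1}m(H\otimes H)$; there is no mechanism by which ``collecting factors against $mm^\dag=\id$'' absorbs the leftover $\delta^{-1}$. The factor is genuinely needed: setting $\tilde m:=cH^{-1}m(H\otimes H)$ one computes $\tilde m\tilde m^\dag=|c|^2\delta^2\,\id$, so speciality of the white spider (forced by the fusion relations \eqref{eq.black}) requires $|c|=\delta^{-1}$, whereas the unnormalized expression printed in the corollary satisfies $\tilde m\tilde m^\dag=\delta^2\,\id$. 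In other words, the discrepancy lies in the paper's stated formula (which should read $\delta^{-1}H^{-1}m(H\otimes H)$, matching the $N^{-3/2}$ of Proposition~\ref{P.FunctorHad}), and your construction of the functor is sound; only the final ``as required'' conceals an arithmetic identity $\delta^{-3}\delta^2=1$ that is false for $\delta\neq1$.
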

\begin{proof}
The fibre functor is constructed by composing the fibre functor from Proposition~\ref{P.FibreHad} with the functor from Proposition~\ref{P.FunctorHad}
\end{proof}
%

Extending the notation from Section~\ref{secc.Bipart}, given a (quantum) Hadamard matrix $H$, we will denote by $F_H$ the both corresponding fibre functors $\NCHad_{\delta^2}\to\Mat$ and $\NCBipart_{\delta^2}\to\Mat$.

\begin{ex}\label{E.transpose}
Consider the finite quantum space $X=M_n$ from Remark~\ref{R.Mn}. That is, $C(X)=M_n(\C)$, the counit is given by $\eta^\dag=n\Tr$, so $\delta^2=n^2$. Now we claim that the map $H\colon M_n(\C)\to M_n(\C)$ acting by $H(a)=n\,a^{\rm T}$, where $a^{\rm T}$ is the matrix transposition, is a quantum Hadamard matrix. This is easy to check diagrammatically since given the identification $l^2(X)=\C^n\otimes\C^n$, we can write $H(e_i\otimes e_j)=n\,e_j\otimes e_i$, so $H=n\Graph{\draw (0.9,0) -- (1.1,1); \draw (1.1,0) -- (0.9,1);}$. Consequently, it is obvious that $H=H^*$ and we can easily check the other conditions:
$$\displaylines{
HH^\dag=n^2\Graph{\draw (0.9,-0.5) -- (1.1,0.5) -- (0.9,1.5); \draw (1.1,-0.5) -- (0.9,0.5) -- (1.1,1.5);}=n^2\Graph{\draw (0.9,0) -- (0.9,1);\draw (1.1,0) -- (1.1,1);}=n^2\Graph{\draw (0.9,-0.5) -- (1.1,0.5) -- (0.9,1.5); \draw (1.1,-0.5) -- (0.9,0.5) -- (1.1,1.5);}=H^\dag H\cr
H\bullet H=m(H\otimes H)m^\dag=n\Graph{
\draw (0.9,0) .. controls (0.9,-0.5) and (1.4,-0.5) .. (1.4,-1);
\draw (2.1,0) .. controls (2.1,-0.5) and (1.6,-0.5) .. (1.6,-1);
\draw (1.1,0) .. controls (1.1,-0.5) and (1.9,-0.5) .. (1.9,0);
\draw (0.9,0) -- (1.1,1); \draw (1.1,0) -- (0.9,1);
\draw (1.9,0) -- (2.1,1); \draw (2.1,0) -- (1.9,1);
\draw (0.9,1) .. controls (0.9,1.5) and (1.4,1.5) .. (1.4,2);
\draw (2.1,1) .. controls (2.1,1.5) and (1.6,1.5) .. (1.6,2);
\draw (1.1,1) .. controls (1.1,1.5) and (1.9,1.5) .. (1.9,1);
}=n\Graph{
\draw (0.9,1) .. controls (0.9,0.7) and (1.1,0.7) .. (1.1,1);
\draw (0.9,0) .. controls (0.9,0.3) and (1.1,0.3) .. (1.1,0);
}=\eta\eta^\dag
}$$
\end{ex}

\section{Structure of $\NCBipartEven$}
\label{sec.cat}

In \cite{GD4}, we conjectured that the number of morphisms in $\NCBipartEven_N(0,k)$ are given by $C_k^2$, where $C_k=\frac{1}{k+1}\binom{2k}{k}$ are \emph{Catalan numbers}. Recall that $C_k$ is the number of non-crossing partitions on $k$ points as well as the number of non-crossing pairings on $2k$ points, so $\dim\NCPart_{\delta^2}(0,k)=\dim\NCPair_{\delta^2}(0,2k)=C_k$. Motivated by this, we additionally conjectured in \cite{GD4} that $\NCBipartEven_N$ is isomorphic to $\NCPair_n\times\NCPair_n$ for appropriate $n$. We are going to prove this conjecture here. More precisely, we prove the following:

\begin{thm}\label{T.Catalan}
The category $\NCBipartEven_{\delta^2}$ is isomorphic (but not monoidally isomorphic) to $\NCPair_\delta\times\NCPair_\delta$ for every $\delta\neq 0$.
\end{thm}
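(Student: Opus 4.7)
The plan is to establish the isomorphism through an explicit combinatorial bijection on morphism spaces that respects composition (though not the monoidal structure). By Proposition~\ref{P.NCBipartEven}, a basis of $\NCBipartEven_{\delta^2}(k,l)$ is given by planar bipartite bilabelled graphs $G$ with $k+l$ boundary half-edges and all internal vertices of even degree at least four. To each such $G$ I associate a pair $(\pi_1(G),\pi_2(G))$ of non-crossing pairings of the $k+l$ boundary points, yielding an element of $\NCPair_\delta(k,l)\otimes\NCPair_\delta(k,l)$ (the hom space in $\NCPair_\delta\times\NCPair_\delta$ between the objects corresponding to $k$ and $l$ under the diagonal embedding $k\mapsto(k,k)$).

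The pairings arise from local vertex smoothings. At a vertex $v$ of degree $2m$, with incident edges $e_1,\dots,e_{2m}$ in cyclic order, one can pair them non-crossingly in two canonical ways: the A-smoothing $(e_1e_2)(e_3e_4)\cdots(e_{2m-1}e_{2m})$ or the B-smoothing $(e_2e_3)(e_4e_5)\cdots(e_{2m}e_1)$. I define $\pi_1(G)$ by using the A-smoothing at each black vertex and the B-smoothing at each white vertex, and $\pi_2(G)$ with the convention swapped. Tracing the resulting chords through the edges of $G$ gives a collection of arcs between boundary points (the pairing) plus possibly some closed loops, each loop contributing a factor $\delta$ in its factor of $\NCPair_\delta$. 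On generators this assignment reads $\pairpart\mapsto(\pairpart,\pairpart)$, $\Gconnecter\mapsto\delta^{-1}(e,\idpart\otimes\idpart)$ and $\Gwhite\Gconnecter\mapsto\delta^{-1}(\idpart\otimes\idpart,e)$, where $e:=\uppairpart\circ\pairpart$ denotes the non-identity morphism of $\NCPair_\delta(2,2)$; the $\delta^{-1}$ per vertex is forced by matching the spider idempotency $\Gconnecter^2=\Gconnecter$ against the TL loop rule $e^2=\delta\,e$.

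Compatibility with composition is the decisive check: gluing two graphs $G_1,G_2$ at middle boundary points concatenates the arcs of $\pi_i(G_1)$ and $\pi_i(G_2)$ through those points, recovering the TL composition rule in each factor, and any closed loops formed contribute the correct $\delta$-factors. For the inverse, given any $(\pi_1,\pi_2)\in\NCPair_\delta(k,l)^{\otimes 2}$, reconstruct $G$ by superimposing the two pairings and contracting each resulting closed cycle of length $2m>2$ into an internal vertex of degree $2m$, coloured black or white according to whether $\pi_1$ implements the A- or B-smoothing there. Bijectivity then follows from a dimension count: by duality, $\dim\NCBipartEven_{\delta^2}(k,l)=\dim\NCBipartEven_{\delta^2}(0,k+l)=C_{(k+l)/2}^2$ (incidentally proving the conjecture of \cite{GD4}), matching $\dim\NCPair_\delta(k,l)^{\otimes 2}=C_{(k+l)/2}^2$.

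The main obstacle is the combinatorial verification that the smoothing procedure genuinely produces non-crossing pairings that are globally consistent; the alternating A/B rule at black/white vertices, together with planarity of $G$, ought to ensure this, but a careful argument is required, probably via a medial-graph construction or induction on the number of internal vertices. The isomorphism is not monoidal because $\NCBipartEven_{\delta^2}$ has object set $\N_0$ while $\NCPair_\delta\times\NCPair_\delta$ has object set $\N_0\times\N_0$, and no object-bijection can respect the additive tensor structure on both sides at once.
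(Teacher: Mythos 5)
Your construction is in essence the same as the paper's: the A/B-smoothing at black/white vertices is exactly the paper's replacement of $\Gconnecter$ by $\TLconnecter$ and $\Gwhite\Gconnecter$ by $\LTconnecter$ followed by the red/blue splitting of boundary points, and your ``superimpose and contract'' inverse is the reconstruction step of Lemma~\ref{L.Catalan}. The first genuine gap is that your smoothing is not well defined as stated. At a vertex of degree $2m$ there are exactly two non-crossing pairings of the incident edge-ends that match only cyclically adjacent edges, and which of them is ``$(e_1e_2)(e_3e_4)\cdots$'' depends on the choice of the starting edge $e_1$, for which you give no rule. This choice is not harmless: compose $\Gconnecter$ with itself, so that two black degree-four vertices are joined by two parallel edges and the reduced diagram is again $\Gconnecter$. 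With one choice of adjacent pairing at the upper vertex the two connecting edges close into a loop, giving $\delta^{-2}(\delta e,\idpart^{\otimes2})=\delta^{-1}(e,\idpart^{\otimes2})$ as required; with the other choice no loop forms and the scalars fail. Some global datum is indispensable here --- the paper uses the checkerboard $2$-colouring of the faces, routing one pairing through the coloured corners and the other through the uncoloured ones --- and the verification that the resulting $\pi_i$ are non-crossing and that composition (with all powers of $\delta$) is respected is the technical core of the proof rather than a deferred detail. The paper avoids checking the relations by hand by first exhibiting a concrete realisation, the transposition on $M_n$ of Example~\ref{E.transpose}, through which the functor factors.

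The second gap is that your bijectivity argument is circular: you invoke $\dim\NCBipartEven_{\delta^2}(0,k+l)=C_{(k+l)/2}^2$ as a known input, but this is precisely the conjecture of \cite{GD4} that the theorem settles; a priori the reduced diagrams are only a spanning set. What has to be proved is that the smoothing map and the superposition map are mutually inverse bijections between reduced diagrams and pairs of non-crossing pairings --- in particular uniqueness of the preimage, which in the paper requires the argument that the two superimposed families of arcs can meet at most once. Once that bijection is established, a spanning set is carried onto a basis of $\NCPair_\delta(k,l)^{\otimes2}$, which yields the dimension formula and injectivity simultaneously; the dimension count is an output, not an input. Finally, your explanation of non-monoidality does not apply to the paper's $\times$, whose object set is $\N_0$ by definition. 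The actual obstruction is a parity twist: tensoring on the left by a morphism with an odd number of legs shifts the alternating boundary colouring and hence swaps the two factors of the image, so the map you describe is monoidal onto the twisted product $\NCPair_\delta\tiltimes\NCPair_\delta$ but only linearly, not monoidally, an isomorphism onto $\NCPair_\delta\times\NCPair_\delta$.
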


First, we should probably make clear, what do we mean by the product $\times$.

\begin{defn}
Let $\Cat_1$ and $\Cat_2$ be two categories. We define $\Cat_1\times\Cat_2$ to be the category with morphism spaces $(\Cat_1\times\Cat_2)(k,l):=\Cat_1(k,l)\otimes\Cat_2(k,l)$. All operations are defined entrywise.
\end{defn}

See also the more general construction by Deligne \cite{Del90} and also the quantum group viewpoint in \cite{CW16}.

Note the following fact.
\begin{lem}\label{L.timesgen}
The category $\NCPair_\delta\times\NCPair_\delta$ is generated by $\Pabba\times\Paabb$, $\Paabb\times\Pabba$, and $\pairpart\times\pairpart$.
\end{lem}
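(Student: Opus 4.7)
The plan is to show that the rigid monoidal $\dag$-subcategory $\mathcal{S}$ of $\NCPair_\delta \times \NCPair_\delta$ generated by the three listed morphisms exhausts the whole product category. First I would identify the generators concretely: reading the letter labels of $\Pabba$ and $\Paabb$ around the boundary of a $(2,2)$-diagram, these are precisely the two basis elements of $\NCPair_\delta(2,2)$, namely the identity on two strands and the Jones projector $e = \uppairpart\circ\pairpart$. So the three generators amount (in some order) to $(\id_2, e)$, $(e, \id_2)$, and $(\pairpart, \pairpart)$, the last of which also provides $(\uppairpart, \uppairpart)$ via $\dag$.

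My first task is to show that the diagonal copy $\{(p, p) : p \in \NCPair_\delta\}$ sits inside $\mathcal{S}$. Since $\NCPair_\delta$ is generated as a rigid monoidal $\dag$-category by the single morphism $\pairpart$, applying the same constructions componentwise to $(\pairpart, \pairpart)$ reproduces $(p, p)$ for every non-crossing pair partition $p$. In particular, all diagonal multi-cups $(R_m, R_m)$ and their adjoints belong to $\mathcal{S}$.

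The core of the argument is to produce arbitrary pairs $(p, q) \in (\NCPair_\delta \times \NCPair_\delta)(n, n)$. Tensoring the asymmetric generators with diagonal identities yields $(e_i, \id_n)$ and $(\id_n, e_j) \in \mathcal{S}(n,n)$ for every position and every $n\ge 2$, where $e_i$ denotes the Jones projector at position $i$. I would then invoke the classical fact that the Temperley--Lieb algebra $TL_n = \NCPair_\delta(n, n)$ is generated as a unital algebra by $e_1, \dots, e_{n-1}$, and more precisely that each of its $C_n$ basis elements (the non-crossing pair partitions themselves) equals a single monomial in the $e_i$'s up to a power of $\delta$. Composing the corresponding asymmetric monomials in $\mathcal{S}$ gives $(p, \id_n) \in \mathcal{S}$ for every $p$; symmetrically $(\id_n, q) \in \mathcal{S}$; and finally $(p, q) = (p, \id_n)\circ(\id_n, q)$, so $\mathcal{S}(n,n) = (\NCPair_\delta\times\NCPair_\delta)(n,n)$.

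To finish, I would extend to general $(k,l)$: when $k+l$ is odd the target space vanishes, and when it is even the rigidity provided by $(\uppairpart,\uppairpart)$ and $(\pairpart,\pairpart)\in\mathcal{S}$ lets me bend any element of $(\NCPair_\delta\times\NCPair_\delta)(k,l)$ to an element of the already-known $(n,n)$-case with $n=(k+l)/2$. The only step with real mathematical content — and hence the main potential obstacle — is the structural claim about $TL_n$: namely that each basis non-crossing matching is recoverable as a single composition of Jones projectors, rather than merely as a $\C$-linear combination. This is classical Temperley--Lieb, but to keep the proof self-contained one would have to spell it out, e.g.\ by inducting on the number of pairs and using a canonical reduced-word form for non-crossing matchings.
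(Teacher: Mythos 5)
Your proof is correct and follows essentially the same route as the paper's: reduce to endomorphism algebras via rigidity, obtain $e_i\times\id_k$ and $\id_k\times e_j$ by tensoring the asymmetric generators with identities, use that the Temperley--Lieb algebra is generated by the Jones projectors to get $p\times\id_k$ and $\id_k\times q$, and compose. The one step you flag as a potential obstacle --- that each non-crossing pairing is a single monomial in the $e_i$'s --- is not actually needed, since the morphism spaces $\Cat(k,k)$ are linear subspaces and generation of $TL_k$ as a unital algebra (allowing linear combinations of words) already suffices.
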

\begin{proof}
Denote $\Cat:=\langle\Pabba\times\Paabb,\Paabb\times\Pabba,\pairpart\times\pairpart\rangle\subset\NCPair_\delta\times\NCPair_\delta$. We are trying to prove the inclusion $\NCPair_\delta\times\NCPair_\delta\subset\Cat$. By definition, $\Cat$ contains the duality morphism $\pairpart\times\pairpart$. Hence, by the so-called Frobenius reciprocity, it is enough to prove the equality for the endomorphism algebras $\Cat(k,k)$. It is known that the Temperley--Lieb algebras $\NCPair_\delta(k,k)$ are generated by elements of the form $e_i=\id_{i-1}\otimes\Paabb\otimes\id_{k-i-1}$, where $\id_j=\idpart^{\otimes j}$. It is clear that $e_i\times \id_k\in\Cat(k,k)$ for every $i,k$. Consequently, $p\times\id_k\in\Cat(k,k)$ for every $p\in\NCPair_\delta(k,k)$. Similarly, we have $\id_k\times q\in\Cat(k,k)$ for every $q\in\NCPair_\delta(k,k)$. Consequently, $p\times q=(\id_k\times q)(p\times \id_k)\in\Cat(k,k)$, which is what we wanted to show. See also \cite[Lemma~2.6]{CE23}.
\end{proof}

We claim that $\NCBipartEven_{\delta^2}$ is monoidally equivalent to a certain twist of $\NCPair_\delta\times\NCPair_\delta$. Namely consider the category $\NCPair_\delta\tiltimes\NCPair_\delta$, which is defined exactly the same way as $\NCPair_\delta\times\NCPair_\delta$ except for the tensor product of morphisms. Consider $T_i,S_i\in\NCPair_\delta(k_i,l_i)$, $i=1,2$. Note that $k_i$ must have the same parity as $l_i$ since otherwise the morphism space is actually empty. We define
$$(T_1\times S_1)\otimes(T_2\times S_2):=\begin{cases}
(T_1\otimes T_2)\times(S_1\otimes S_2)&\text{if $k_1,l_1$ are even,}\\
(T_1\otimes S_2)\times(S_1\otimes T_2)&\text{if $k_1,l_1$ are odd.}
\end{cases}
$$

\begin{lem}\label{L.twistgen}
The category $\NCPair_\delta\tiltimes\NCPair_\delta$ is generated by $\Pabba\times\Paabb$, $\Paabb\times\Pabba$, and $\pairpart\times\pairpart$.
\end{lem}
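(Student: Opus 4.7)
The plan is to adapt the proof of Lemma~\ref{L.timesgen} to the twisted setting. Let $\Cat$ denote the subcategory of $\NCPair_\delta \tiltimes \NCPair_\delta$ generated by the three listed morphisms. Since the duality morphism $\pairpart \times \pairpart$ is among the generators, Frobenius reciprocity reduces the claim to showing $\Cat(k, k) = \NCPair_\delta(k, k) \otimes \NCPair_\delta(k, k)$ for every $k$. Because $\NCPair_\delta(k, k)$ is generated as a $*$-algebra by the Temperley--Lieb elements $e_i^{(k)} := \id_{i-1} \otimes \Paabb \otimes \id_{k-i-1}$ for $1 \le i \le k-1$, and composition in the product category is entrywise, it is sufficient to exhibit both $e_i^{(k)} \times \id_k$ and $\id_k \times e_i^{(k)}$ inside $\Cat(k, k)$ for every admissible $(k, i)$.

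The key step is to analyse the two candidates
\[
X_i^{(k)} := \id_{i-1} \otimes (\Pabba \times \Paabb) \otimes \id_{k-i-1}, \qquad Y_i^{(k)} := \id_{i-1} \otimes (\Paabb \times \Pabba) \otimes \id_{k-i-1}
\]
evaluated with the twisted tensor product. In the generic range $1 < i < k-1$, the parity-dependent swap is triggered exactly twice --- once when $\id_{i-1}$ meets the generator on the left, and again when the intermediate (of total size $i+1$) meets $\id_{k-i-1}$ on the right --- and the two swaps cancel, so $X_i^{(k)} = \id_k \times e_i^{(k)}$ and $Y_i^{(k)} = e_i^{(k)} \times \id_k$. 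In the boundary cases $i = 1$ or $i = k - 1$, where one of the flanking identities is $\id_0$, at most one swap can occur; a direct case-by-case check on the parity of $i-1$ shows that the unordered pair $\{X_i^{(k)}, Y_i^{(k)}\}$ still equals $\{\id_k \times e_i^{(k)}, e_i^{(k)} \times \id_k\}$. The single swap (in the odd-parity boundary case) merely exchanges the roles of the two generators, but together they still produce both required symmetric endomorphisms.

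Having $e_i^{(k)} \times \id_k, \id_k \times e_i^{(k)} \in \Cat(k, k)$ for all $i$, we deduce $p \times q = (p \times \id_k)(\id_k \times q) \in \Cat(k, k)$ for arbitrary $p, q \in \NCPair_\delta(k, k)$, completing the proof. The main obstacle --- and the only substantive addition over Lemma~\ref{L.timesgen} --- is the careful bookkeeping of the parity-induced swaps across interior and boundary cases; the use of both generators $\Pabba \times \Paabb$ and $\Paabb \times \Pabba$ (rather than only one, as would suffice in the untwisted lemma) is precisely what guarantees that both symmetric endomorphisms are produced even in the boundary situations where a single uncompensated swap occurs.
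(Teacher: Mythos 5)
Your argument is correct and reaches the right conclusion, but it re-derives from scratch what the paper dispatches in one line: the paper's proof of Lemma~\ref{L.twistgen} simply observes that the generating set is invariant under the swap $\sigma(T\times S)=S\times T$, and since $\sigma$ commutes with composition and involution while the twisted tensor product differs from the untwisted one only by occasional applications of $\sigma$, the subcategories generated under the twisted and untwisted operations coincide; Lemma~\ref{L.timesgen} then finishes the job. Your explicit computation of $X_i^{(k)}$ and $Y_i^{(k)}$ is a concrete unwinding of the same idea: every flanking factor is of the form $A\times A$, hence $\sigma$-invariant, so any swap that fires merely exchanges $X_i^{(k)}$ with $Y_i^{(k)}$, and the unordered pair is always $\{e_i^{(k)}\times\id_k,\ \id_k\times e_i^{(k)}\}$, which is all you need. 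Two small corrections to your bookkeeping, though. First, in the interior range the swap is triggered an \emph{even} number of times (zero when $i-1$ is even), not ``exactly twice.'' Second, whether tensoring with $\id_{k-i-1}$ on the right triggers a swap at all depends on how the definition of $\tiltimes$ is read: the displayed formula, applied literally and left-associatively, fails to be associative, while the associative version implicit in the colouring construction of Lemma~\ref{L.FPair'} twists the factors of the \emph{right-hand} operand, so right-tensoring with $\id\times\id$ never swaps. Neither point damages your proof, precisely because of the $\sigma$-invariance just described, but the robust formulation is the unordered-pair statement rather than a definite assignment of which of $X_i^{(k)},Y_i^{(k)}$ equals which of $e_i^{(k)}\times\id_k,\ \id_k\times e_i^{(k)}$.
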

\begin{proof}
Follows from Lemma~\ref{L.timesgen} and the fact that the set of generators is invariant under swapping the $\times$-factors.
\end{proof}

Finally, denote by $\Pair_\delta\subset\Part_\delta$ the Brauer's category of all pairings and by $\Pair'_\delta$ its full subcategory given by restricting to even objects only (as in Remark~\ref{R.Mn}).

\begin{lem}\label{L.FNCBipart}
There is a monoidal unitary functor $\NCBipart_{\delta^2}\to\Pair'_\delta$ mapping
$$\Gfork\mapsto\frac{1}{\sqrt\delta}\TLfork,\quad\Gwhite\Gfork\mapsto\frac{1}{\sqrt\delta}\LTfork,\quad\pairpart\mapsto\TLpair$$
\end{lem}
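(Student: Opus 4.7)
The plan is to define $F$ on the three generators as specified, extend it monoidally to all compositions and tensor products, and enforce the unitarity condition $F(A^\dag) := F(A)^\dag$. Consistency of this definition reduces to checking that the defining relations of $\NCBipart_{\delta^2}$ — the black and white spider relations~\eqref{eq.black}, the complementarity relations~\eqref{eq.bipart}, and the snake equation for $\pairpart$ — are all satisfied by the images in $\Pair'_\delta$.

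For the black spider relations I would invoke the abstract version of Remark~\ref{R.Mn}: the assignment $\Gfork \mapsto \frac{1}{\sqrt\delta}\TLfork$, together with the images of $\Gsing$ and $\Gupsing$ as $\sqrt\delta$ times a small cap and cup respectively, already extends to a monoidal $\dag$-isomorphism $\NCPart_{\delta^2} \cong \NCPair'_\delta$. For the white spider I would observe that $\LTfork$ is obtained from $\TLfork$ by swapping the two strands making up each doubled input and output, which is a monoidal $\dag$-automorphism of the full subcategory of $\Pair'_\delta$ on even objects; equivalently, $\frac{1}{\sqrt\delta}\LTfork$ realises the opposite algebra structure as a special Frobenius $*$-algebra. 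Either way, every spider relation from~\eqref{eq.black} is respected by the white images as well.

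The main work is the complementarity relations~\eqref{eq.bipart}. For the identity $F(\uppairpart) = F(\Guppair) = F(\Gwhite\Guppair)$ I would compute each cup image by stacking the image of the relevant comultiplication above the image of the unit. On the black side the $\sqrt\delta$ factors cancel and the composition visibly produces the nested double cup, which is also $F(\uppairpart) = F(\pairpart)^\dag$. On the white side the same stacking with $\LTfork$ in place of $\TLfork$ yields the same nested double cup, because the small cup coming from $\Gupsing$ contracts exactly the two strands that the crossing inside $\LTfork$ swaps — mirroring the algebraic fact that both Frobenius structures share the same unit. For the double-edge relation I would stack $\TLfork$ above the upside-down diagram of $\LTfork$, which up to the factor $\frac{1}{\sqrt\delta}$ is the image of $\Gwhite\Gmerge$, and trace the six internal strands through the four matched midpoints. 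The key observation is that the internal crossing of flipped $\LTfork$ is absorbed by the nested arrangement of $\TLfork$, so the six strands merge into exactly two open strands — a small cap on the two input strings at the bottom and a small cup on the two output strings at the top — with no closed loops. The composition therefore equals $\frac{1}{\delta}$ times the rank-one morphism $\uppairpart\circ\pairpart$, which coincides with the image of $\frac{1}{N}$ times the right-hand side of~\eqref{eq.bipart} once the $\sqrt\delta$ factors coming from the unit and counit are combined. The snake equation for $\pairpart$ transfers automatically, since $\TLpair$ and its adjoint satisfy it in $\Pair'_\delta$.

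The main obstacle is the loop-counting in the double-edge computation: a single stray closed loop would contribute an extra factor of $\delta$ and destroy the matching of normalisations, so verifying that the crossing inside $\LTfork$ and the nested structure of $\TLfork$ interlock precisely so that no loop forms is the one step that genuinely requires careful diagrammatic bookkeeping.
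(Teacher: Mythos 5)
Your proof is correct, but it takes a more hands-on route than the paper. The paper obtains the functor by composing three previously established pieces: the isomorphism $\NCPart_{\delta^2}\cong\NCPair'_\delta$ of Remark~\ref{R.Mn}, the observation from Example~\ref{E.transpose} that $\delta$ times the strand swap is a quantum Hadamard matrix on $M_\delta$ (hence yields a functor $\Had_{\delta^2}\to\Pair'_\delta$), and the functor $\NCBipart_N\to\NCHad_N$ of Proposition~\ref{P.FunctorHad}; the image $\frac{1}{\sqrt\delta}\LTfork$ of $\Gwhite\Gfork$ then falls out as the crossing-conjugate of $\frac{1}{\sqrt\delta}\TLfork$ without any further relation-checking. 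You instead verify the defining relations of $\NCBipart_{\delta^2}$ directly on the generators: Remark~\ref{R.Mn} for the black spiders, the observation that $\LTfork$ is the conjugate of $\TLfork$ by the strand swap (equivalently the opposite Frobenius structure, with $\TLpair$ invariant under that conjugation) for the white spiders, and an explicit strand-tracing for the two relations in~\eqref{eq.bipart}. Your loop count in the double-edge computation is right: $\TLfork\circ(\LTfork)^\dag$ produces exactly one cap on the inputs and one cup on the outputs with no closed loop, so the composite is $\frac{1}{\delta}$ times cup-after-cap, matching $\frac{1}{\delta^2}\cdot\sqrt\delta\cdot\sqrt\delta$ from the right-hand side. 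This computation is of course the same one hiding inside the paper's check that the transposition is a quantum Hadamard matrix, so the two proofs rest on identical diagrammatics; what the paper's modular version buys is reusability (the intermediate functor through $\Had_{\delta^2}$ is used again elsewhere), while yours is self-contained and makes the normalisation bookkeeping explicit.
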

\begin{proof}
It is well known (recall from Remark~\ref{R.Mn}) that there is a monoidal isomorphism $\NCPart_{\delta^2}\to\NCPair'_\delta\subset\Pair'_\delta$ given by $\Gfork\mapsto\frac{1}{\sqrt\delta}\TLfork$, $\pairpart\mapsto\TLpair$. From Example~\ref{E.transpose}, it follows that $\delta\,\Graph{\draw (0.9,0) -- (1.1,1); \draw (1.1,0) -- (0.9,1);}$ satisfies the relations \eqref{eq.HadMor} for a Hadamard morphism. Hence, we have a functor $\Had_{\delta^2}\to\Pair'_\delta$ mapping  $\GHadamard\mapsto\Graph{\draw (0.9,0) -- (1.1,1); \draw (1.1,0) -- (0.9,1);}$. Finally we compose it with the functor from Proposition~\ref{P.FunctorHad} in order to obtain the desired functor $\NCBipart_{\delta^2}\to\Pair'_\delta$.
\end{proof}

\begin{rem}\label{R.FNCBipart}
We can restrict this functor to $\NCBipartEven_{\delta^2}$ and obtain a functor
$$
\NCBipartEven_{\delta^2}=\langle\Gconnecter,\Gwhite\Gconnecter,\pairpart\rangle
\quad\to\quad
\langle\TLconnecter,\LTconnecter,\TLpair\rangle\subset\Pair_{\delta}'
$$
(On the right-hand side, we consider
$\langle\TLconnecter,\LTconnecter,\TLpair\rangle$
as a subcategory of $\Pair_\delta'$. So, in particular, it must contain the identity morphism $\TLid$, which we do not list explicitly as a generator.)
\end{rem}

\begin{lem}\label{L.FPair'}
The category $\langle\TLconnecter,\LTconnecter,\TLpair\rangle\subset\Pair'_\delta$ is monoidally isomorphic with $\NCPair_\delta\tiltimes\NCPair_\delta$
\end{lem}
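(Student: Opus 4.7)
The plan is to construct an explicit monoidal unitary functor
\[ \Phi\colon \NCPair_\delta\tiltimes\NCPair_\delta \longrightarrow \Pair'_\delta \]
whose image is exactly $\langle\TLconnecter,\LTconnecter,\TLpair\rangle$ and which is injective, giving the claimed isomorphism by restriction. On objects set $\Phi(k):=2k$, and on morphisms use a \emph{doubling-with-interleaving} rule: for $(p,q)\in\NCPair_\delta(k,l)\otimes\NCPair_\delta(k,l)$, arrange the $2(k+l)$ boundary points of $\Pair'_\delta(2k,2l)$ in their natural clockwise order and group them into $k+l$ consecutive pairs. In each pair designate one endpoint as \emph{outer} and the other as \emph{inner}, alternating which endpoint is outer as $i$ runs over the $k+l$ groups. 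Declare $\Phi(p\times q)$ to be the pairing that connects outer points according to $p$ and inner points according to $q$.

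I would begin by verifying the generator match: the three generators from Lemma~\ref{L.twistgen} go to $\TLconnecter$, $\LTconnecter$, and $\TLpair$ by direct diagrammatic inspection. For instance, in $\TLpair$ the outer bottom positions $\{B_1,B_4\}$ and inner ones $\{B_2,B_3\}$ each receive the single $\pairpart$ pair, giving exactly the two nested caps. Together with Lemma~\ref{L.twistgen}, this already shows that $\Phi$ surjects onto $\langle\TLconnecter,\LTconnecter,\TLpair\rangle$.

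Next I would check functoriality. Composition is immediate, since stacking two morphisms identifies outer middle points with outer and inner with inner, so the $p$- and $q$-pairings compose independently and any closed loop from either factor contributes its own power of $\delta$. For tensor products the crucial observation is that placing the second morphism to the right of the first shifts its group indices by $k_1$; the alternating rule then preserves the standalone outer/inner assignment exactly when $k_1$ is even and swaps it when $k_1$ (equivalently $l_1$) is odd, which is precisely the exchange of factors built into the definition of $\tiltimes$. Injectivity is then clear, as from any image pairing one recovers $p$ as the pairing among outer positions and $q$ as the pairing among inner positions.

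The main obstacle will be the parity bookkeeping in the tensor-product check: tracking carefully how the alternating convention on the combined boundary aligns with or swaps the second morphism's factor assignment, and matching this against the twist case distinction in the definition of $\tiltimes$. Everything else — the object assignment, the generator correspondence, composition, and injectivity — is essentially mechanical once the convention is fixed.
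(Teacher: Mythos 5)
Your construction is the paper's own proof read in the opposite direction: the alternating outer/inner designation of the interleaved boundary points is exactly the paper's red--blue colouring (red, blue, blue, red, red, blue, \dots), and your functor $\Phi$ is the inverse of the paper's map sending a diagram $t$ to the pair $(p,q)$ obtained by restricting to the red, resp.\ blue, points. The proposal is correct; your parity bookkeeping for the tensor product (swapping the second tensorand's outer/inner assignment when $k_1$ is odd) is in fact the only associative reading of the twist $\tiltimes$, and it is precisely the one step the paper itself leaves to the reader.
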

\begin{proof}
For arbitrary diagram $t\in\langle\TLconnecter,\LTconnecter,\TLpair\rangle$ we are going to construct a pair $(p,q)$ of pair partitions. In order to do that, we colour all the upper and lower points by colours red, blue, blue, red, red, blue and so on.

We claim that in the category $\langle\TLconnecter,\LTconnecter,\TLpair\rangle$, inputs/outputs can be connected by a string only if they have the same colour. First, this is true for the generators. Secondly, we prove that this property is preserved under taking composition, tensor product, and involution. This is clear for involution as the colouring does not change after taking the horizontal flip. It is also clear for composition since, when composing, we clearly only connect strings of the same colour. It may be a bit unclear for tensor product. So, take two diagrams $t,s\in\langle\TLconnecter,\LTconnecter,\TLpair\rangle$. If the number of inputs (and therefore also outputs) of $t$ is divisible by four, then $t\otimes s$ is given just by writing the diagrams side by side and the colours match the rule, so if it is true that red (blue)
 points are only connected to red (blue) in $t$ and $s$, it must be true in $t\otimes s$ as well. If the number of inputs of $t$ is (divisible by two, but) not divisible by four (and hence the number of outputs as well), then the colouring of $t\otimes s$ is given by flipping the colours for $s$. But the same argument applies here as well.

Finally, we construct the two non-crossing pairings simply by restricting to the inputs/outputs with the red, resp.\ blue colour. Here is a couple of examples:
\begin{align*}
\Large\Graph{
\draw[red!80!black] (0.9,0) -- (0.9,1);
\draw[red!80!black] (2.1,0) -- (2.1,1);
\draw[blue!80!black] (1.1,0) .. controls (1.1,0.5) and (1.9,0.5) .. (1.9,0);
\draw[blue!80!black] (1.1,1) .. controls (1.1,0.5) and (1.9,0.5) .. (1.9,1);
}
&\quad\mapsto\quad
\Large\Graph{
\draw[red!80!black] (1,0) -- (1,1);
\draw[red!80!black] (2,0) -- (2,1);
}
\times
\Large\Graph{
\draw[blue!80!black] (1,0) .. controls (1,0.5) and (2,0.5) .. (2,0);
\draw[blue!80!black] (1,1) .. controls (1,0.5) and (2,0.5) .. (2,1);
}\\
\Large\Graph{
\draw[blue!80!black] (1.1,0) -- (1.1,1);
\draw[blue!80!black] (1.9,0) -- (1.9,1);
\draw[red!80!black] (0.9,0) .. controls (0.9,0.5) and (2.1,0.5) .. (2.1,0);
\draw[red!80!black] (0.9,1) .. controls (0.9,0.5) and (2.1,0.5) .. (2.1,1);
}
&\quad\mapsto\quad
\Large\Graph{
\draw[red!80!black] (1,0) .. controls (1,0.5) and (2,0.5) .. (2,0);
\draw[red!80!black] (1,1) .. controls (1,0.5) and (2,0.5) .. (2,1);
}\times
\Large\Graph{
\draw[blue!80!black] (1,0) -- (1,1);
\draw[blue!80!black] (2,0) -- (2,1);
}
\\
\Large\Graph{
\draw[red!80!black] (0.9,0) -- (0.9,1);
\draw[red!80!black] (2.1,0) -- (2.1,1);
\draw[blue!80!black] (1.1,1) .. controls (1.1,0.5) and (1.9,0.5) .. (1.9,1);
\draw[red!80!black] (2.9,0) -- (2.9,1);
\draw[red!80!black] (4.1,0) -- (4.1,1);
\draw[blue!80!black] (3.1,0) .. controls (3.1,0.5) and (3.9,0.5) .. (3.9,0);
\draw[blue!80!black] (3.1,1) .. controls (3.1,0.5) and (3.9,0.5) .. (3.9,1);
\draw[red!80!black] (4.9,1) .. controls (4.9,0.5) and (6.1,0.5) .. (6.1,1);
\draw[blue!80!black] (1.1,0) .. controls (1.1,0.5) and (5.1,0.5) .. (5.1,1);
\draw[blue!80!black] (1.9,0) .. controls (1.9,0.5) and (5.9,0.5) .. (5.9,1);
}
&\quad\mapsto\quad
\Large\Graph{
\draw[red!80!black] (1,0) -- (1,1);
\draw[red!80!black] (2,0) -- (2,1);
\draw[red!80!black] (3,0) -- (3,1);
\draw[red!80!black] (4,0) -- (4,1);
\draw[red!80!black] (4.9,1) .. controls (4.9,0.5) and (6,0.5) .. (6,1);
}
\times
\Large\Graph{
\draw[blue!80!black] (1,1) .. controls (1,0.5) and (2,0.5) .. (2,1);
\draw[blue!80!black] (3,0) .. controls (3,0.5) and (4,0.5) .. (4,0);
\draw[blue!80!black] (3,1) .. controls (3,0.5) and (4,0.5) .. (4,1);
\draw[blue!80!black] (1,0) .. controls (1,0.5) and (5,0.5) .. (5,1);
\draw[blue!80!black] (2,0) .. controls (2,0.5) and (6,0.5) .. (6,1);
}
\end{align*}

The map is clearly functorial and bijective for the reasons we said above (involution does not change colours, taking composition we connect reds to reds and blues to blues). The tensor product $t\otimes s$ works fine in the case when $t$ has the number of inputs divisible by four. If the number of inputs is not divisible by four, we need to switch red and blue in $s$, which precisely matches the defining property of the twisted tensor product.
\end{proof}

Finally, it remains to show that the functor from Remark~\ref{R.FNCBipart} is also an isomorphism. Let us first formulate a weaker version of Theorem~\ref{T.Catalan}, which will be much easier to prove.

\begin{prop}\label{P.Catalan}
The categories $\NCBipartEven_{n^2}$ and $\NCPair_n\tiltimes\NCPair_n$ are monoi\-dally isomorphic up to negligible morphisms for every $n\in\N$.
\end{prop}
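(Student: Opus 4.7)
The plan is to realize the candidate monoidal functor as a composition of maps already built in the paper and then identify kernels via purity. First, set
\[F := L_2\circ L_1\colon \NCBipartEven_{n^2}\longrightarrow \NCPair_n\tiltimes\NCPair_n,\]
where $L_1\colon \NCBipartEven_{n^2}\to\langle\TLconnecter,\LTconnecter,\TLpair\rangle$ is the restriction of the functor of Lemma~\ref{L.FNCBipart} noted in Remark~\ref{R.FNCBipart}, and $L_2$ is the monoidal isomorphism of Lemma~\ref{L.FPair'}. Tracking the two-colouring used in the proof of Lemma~\ref{L.FPair'} through each of the three generators of $\NCBipartEven_{n^2}$ listed in Proposition~\ref{P.NCBipartEven}, one reads off
\[\Gconnecter\longmapsto\Pabba\times\Paabb,\qquad \Gwhite\Gconnecter\longmapsto\Paabb\times\Pabba,\qquad \pairpart\longmapsto\pairpart\times\pairpart.\]
By Lemma~\ref{L.twistgen} these are exactly the generators of $\NCPair_n\tiltimes\NCPair_n$, so $F$ is surjective on every morphism space.

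For injectivity modulo negligibles, I would compose with a concrete fibre functor. Let $\iota\colon\langle\TLconnecter,\LTconnecter,\TLpair\rangle\hookrightarrow\Pair'_n\hookrightarrow\Mat$ be the embedding already implicit in Lemma~\ref{L.FNCBipart}, and put $G':=\iota\circ L_2^{-1}$ and $G:=G'\circ F=\iota\circ L_1$. Both $G$ and $G'$ are then fibre functors to $\Mat$. The source $\NCBipartEven_{n^2}$ is pure by the proposition proved immediately after Proposition~\ref{P.NCBipartEven}, and $\NCPair_n\tiltimes\NCPair_n$ is pure as well because
\[(\NCPair_n\tiltimes\NCPair_n)(0,0)=\NCPair_n(0,0)\otimes\NCPair_n(0,0)=\C\otimes\C=\C.\]
Proposition~\ref{P.pure} therefore yields $\ker G=\Neg(\NCBipartEven_{n^2})$ and $\ker G'=\Neg(\NCPair_n\tiltimes\NCPair_n)$.

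Since $G=G'\circ F$, we have $\ker G=F^{-1}(\ker G')$, so $F$ descends to a monoidal $*$-functor
\[\bar F\colon \NCBipartEven_{n^2}/\Neg\ \longrightarrow\ \NCPair_n\tiltimes\NCPair_n/\Neg\]
which is injective by construction and surjective because $F$ already is; this is the claimed isomorphism up to negligible morphisms.

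The main obstacle is not the formal kernel computation but the bookkeeping needed for the generator correspondence above: one has to verify carefully that the colouring argument of Lemma~\ref{L.FPair'} really sends $\Gconnecter$ and $\Gwhite\Gconnecter$ to the \emph{swapped} pairs $\Pabba\times\Paabb$ and $\Paabb\times\Pabba$, and, most importantly, that the twist $\tiltimes$ in the tensor product of the target is correctly reproduced by $F$ on tensor products of odd-length objects (where the two colour classes interchange). Once this diagrammatic identification is pinned down, the remainder is essentially a two-line application of Proposition~\ref{P.pure}.
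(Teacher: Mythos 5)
Your proof is correct and follows essentially the same route as the paper: build $F$ by composing the functor of Remark~\ref{R.FNCBipart} with the isomorphism of Lemma~\ref{L.FPair'}, get surjectivity from Lemma~\ref{L.twistgen}, and control the kernel by factoring the concrete fibre functor of Example~\ref{E.transpose} through the intermediate Temperley--Lieb category and invoking purity via Proposition~\ref{P.pure}. You are slightly more explicit than the paper in verifying purity of the target and in identifying $\ker G=F^{-1}(\ker G')$, which is a welcome clarification but not a different argument.
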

\begin{proof}
We get a monoidal functor $F\colon\NCBipartEven_{n^2}\to\NCPair_n\tiltimes\NCPair_n$ by composing the functor from Remark \ref{R.FNCBipart} with the functor from Lemma~\ref{L.FPair'}. It remains to show that it is an isomorphism up to negligible morphisms.

First, it is clearly surjective since its image contains $\Pabba\times\Paabb=F(\Gconnecter)$, $\Paabb\times\Pabba=F(\Gwhite\Gconnecter)$, $\pairpart\times\pairpart=F(\pairpart)$, which are according to Lemma~\ref{L.twistgen} generators of $\NCPair_n\tiltimes\NCPair_n$.

Secondly, it must also be injective up to negligible morphisms. This is because there is a fibre functor $F_H\colon\NCBipartEven_{n^2}\to\Mat$, which factors through $\langle\TLconnecter,\LTconnecter,\TLpair\rangle$ (see Example~\ref{E.transpose}). Since the category $\NCBipartEven_{n^2}$ is pure, the fibre functor must be injective up to negligible morphisms.
\end{proof}

In order to get rid of the ``up to negligible morphisms'' and actually prove Theorem~\ref{T.Catalan} for arbitrary complex $\delta\neq 0$, we need to dig more deeply into the combinatorics and prove that the dimensions coincide by explicitly describing how this functor acts on the diagrams.

So, take any diagram $t\in\NCBipartEven(k,l)$. We are going to construct a pair of non-crossing pairings $p,q\in\NCPair(k,l)$. Recall that $k+l$ must be even, otherwise there is no diagram. The construction will work for arbitrary $k$ and $l$ but note that, without loss of generality, we can assume that both $k$ and $l$ are even, which might make the considerations a bit easier. (We could also just assume that $k=0$, but that actually does not make anything simpler.) 
Recall that $t$ is essentially a planar graph. More precisely, if we imagine that all inputs and outputs are connected to an additional common vertex, we get a planar graph. Now, we can colour every other face of this graph by yellow colour. Such a colouring makes sense: If we connect all the input and output strings to a single new vertex, we get a planar graph. It is known that the dual graph of a planar graph where every vertex has even degree is bipartite. For uniqueness, assume that the leftmost face is uncoloured. An example: 
$$
\Large\Graph{
\draw[fill=yellow] (5,1.5) -- (5,1) -- (4.5,0.5) -- (6,0.5) -- (6,1.5);
\draw[fill=yellow] (3.5,0) -- (2.5,0.5) -- (3.5,1) -- (4.5,0.5)node[fill=white]{} -- cycle;
\draw[fill=yellow] (3,-.5) -- (3,-.2) -- (3.5,0)node{} -- (4,-.2) -- (4,-.5);
\draw[fill=yellow] (3,1.5) -- (3,1.2) -- (3.5,1)node{} -- (4,1.2) -- (4,1.5);
\draw[fill=yellow] (1.5,-0.5) -- (1.5,0.5) -- (2.5,0.5)node[fill=white]{} -- (2,0.3) -- (2,-0.5);
\draw[fill=yellow] (1,1.5) -- (1,0.7) -- (1.5,0.5)node{} -- (2,0.7) -- (2,1.5);
}
$$

Now, we are going to construct the non-crossing pairing $p$ as follows: Follow the black lines, but whenever you hit a black vertex, connect your line with a neighbouring one going through the white region. Whenever you hit a white vertex, do the same, but go through the yellow region. To be more precise, for every black and white vertex, choose its small neighbourhood (e.g. shape of a circle) containing no other vertex. Your line should exatly follow the black one, but whenever you hit the neighbourhood of some vertex, follow its border through the white or yellow region until you hit a black line again. The pairing $q$ is constructed the same way, but switching black and white. In the example below, we denote $p$ in red and $q$ in blue for clarity. We also do not follow the black lines exactly for better readability.
\begin{align*}
\Large\Graph{
\draw[fill=yellow] (5,1.5) -- (5,1) -- (4.5,0.5) -- (6,0.5) -- (6,1.5);
\draw[fill=yellow] (3.5,0) -- (2.5,0.5) -- (3.5,1) -- (4.5,0.5)node[fill=white]{} -- cycle;
\draw[fill=yellow] (3,-.5) -- (3,-.2) -- (3.5,0)node{} -- (4,-.2) -- (4,-.5);
\draw[fill=yellow] (3,1.5) -- (3,1.2) -- (3.5,1)node{} -- (4,1.2) -- (4,1.5);
\draw[fill=yellow] (1.5,-0.5) -- (1.5,0.5) -- (2.5,0.5)node[fill=white]{} -- (2,0.3) -- (2,-0.5);
\draw[fill=yellow] (1,1.5) -- (1,0.7) -- (1.5,0.5)node{} -- (2,0.7) -- (2,1.5);
}
\quad\rightarrow\quad
\Large\Graph{
\draw[fill=yellow] (5,1.5) -- (5,1) -- (4.5,0.5) -- (6,0.5) -- (6,1.5);
\draw[fill=yellow] (3.5,0) -- (2.5,0.5) -- (3.5,1) -- (4.5,0.5)node[fill=white]{} -- cycle;
\draw[fill=yellow] (3,-.5) -- (3,-.2) -- (3.5,0)node{} -- (4,-.2) -- (4,-.5);
\draw[fill=yellow] (3,1.5) -- (3,1.2) -- (3.5,1)node{} -- (4,1.2) -- (4,1.5);
\draw[fill=yellow] (1.5,-0.5) -- (1.5,0.5) -- (2.5,0.5)node[fill=white]{} -- (2,0.3) -- (2,-0.5);
\draw[fill=yellow] (1,1.5) -- (1,0.7) -- (1.5,0.5)node{} -- (2,0.7) -- (2,1.5);
\draw [red!80!black] (0.9,1.5) -- (0.9,1) .. controls (0.9,0.5) and (1.4,0.5) .. (1.4,0) -- (1.4,-.5);
\draw [red!80!black] (2.1,1.5) -- (2.1,0.7) .. controls (2.1,0.5) and (2.4,0.6) .. (2.4,0.5) .. controls (2.4,0.4) and (2.1,0.4) .. (2.1,0) -- (2.1,-.5);
\draw [red!80!black] (2.9,1.5) .. controls (2.9,1.4) .. (3,1.2) .. controls (3.1,1) and (3.1,0.85) .. (3,0.8) .. controls (2.4,0.5) .. (3,0.2) .. controls (3.1,0.15) and (3.1,0) .. (3,-.2) .. controls (2.9,-.4) .. (2.9,-.5);
\draw [red!80!black] (4.1,1.5) .. controls (4.1,1.4) .. (4,1.2) .. controls (3.9,1) and (3.9,0.85) .. (4,0.8) .. controls (4.6,0.5) .. (4,0.2) .. controls (3.9,0.15) and (3.9,0) .. (4,-.2) .. controls (4.1,-.4) .. (4.1,-.5);
\draw [red!80!black] (4.9,1.5) .. controls (4.9,0.8) and (4.6,1) .. (4.6,0.5) .. controls (4.6,0) and (6.1,0) .. (6.1,0.5) -- (6.1,1.5);
}
&\quad\rightarrow\quad
\Large\Graph{
\draw [red!80!black] (0.9,1.5) -- (0.9,1) .. controls (0.9,0.5) and (1.4,0.5) .. (1.4,0) -- (1.4,-.5);
\draw [red!80!black] (2.1,1.5) -- (2.1,0.7) .. controls (2.1,0.5) and (2.4,0.6) .. (2.4,0.5) .. controls (2.4,0.4) and (2.1,0.4) .. (2.1,0) -- (2.1,-.5);
\draw [red!80!black] (2.9,1.5) .. controls (2.9,1.4) .. (3,1.2) .. controls (3.1,1) and (3.1,0.85) .. (3,0.8) .. controls (2.4,0.5) .. (3,0.2) .. controls (3.1,0.15) and (3.1,0) .. (3,-.2) .. controls (2.9,-.4) .. (2.9,-.5);
\draw [red!80!black] (4.1,1.5) .. controls (4.1,1.4) .. (4,1.2) .. controls (3.9,1) and (3.9,0.85) .. (4,0.8) .. controls (4.6,0.5) .. (4,0.2) .. controls (3.9,0.15) and (3.9,0) .. (4,-.2) .. controls (4.1,-.4) .. (4.1,-.5);
\draw [red!80!black] (4.9,1.5) .. controls (4.9,0.8) and (4.6,1) .. (4.6,0.5) .. controls (4.6,0) and (6.1,0) .. (6.1,0.5) -- (6.1,1.5);
}
\quad\rightarrow\quad
\Large\Graph{
\draw[red!80!black] (1,0) -- (1,1);
\draw[red!80!black] (2,0) -- (2,1);
\draw[red!80!black] (3,0) -- (3,1);
\draw[red!80!black] (4,0) -- (4,1);
\draw[red!80!black] (4.9,1) .. controls (4.9,0.5) and (6,0.5) .. (6,1);
}\\
\Large\Graph{
\draw[fill=yellow] (5,1.5) -- (5,1) -- (4.5,0.5) -- (6,0.5) -- (6,1.5);
\draw[fill=yellow] (3.5,0) -- (2.5,0.5) -- (3.5,1) -- (4.5,0.5)node[fill=white]{} -- cycle;
\draw[fill=yellow] (3,-.5) -- (3,-.2) -- (3.5,0)node{} -- (4,-.2) -- (4,-.5);
\draw[fill=yellow] (3,1.5) -- (3,1.2) -- (3.5,1)node{} -- (4,1.2) -- (4,1.5);
\draw[fill=yellow] (1.5,-0.5) -- (1.5,0.5) -- (2.5,0.5)node[fill=white]{} -- (2,0.3) -- (2,-0.5);
\draw[fill=yellow] (1,1.5) -- (1,0.7) -- (1.5,0.5)node{} -- (2,0.7) -- (2,1.5);
}
\quad\rightarrow\quad
\Large\Graph{
\draw[fill=yellow] (5,1.5) -- (5,1) -- (4.5,0.5) -- (6,0.5) -- (6,1.5);
\draw[fill=yellow] (3.5,0) -- (2.5,0.5) -- (3.5,1) -- (4.5,0.5)node[fill=white]{} -- cycle;
\draw[fill=yellow] (3,-.5) -- (3,-.2) -- (3.5,0)node{} -- (4,-.2) -- (4,-.5);
\draw[fill=yellow] (3,1.5) -- (3,1.2) -- (3.5,1)node{} -- (4,1.2) -- (4,1.5);
\draw[fill=yellow] (1.5,-0.5) -- (1.5,0.5) -- (2.5,0.5)node[fill=white]{} -- (2,0.3) -- (2,-0.5);
\draw[fill=yellow] (1,1.5) -- (1,0.7) -- (1.5,0.5)node{} -- (2,0.7) -- (2,1.5);
\draw [blue!80!black] (1.1,1.5) -- (1.1,1) .. controls (1.1,0.5) and (1.9,0.5) .. (1.9,1) -- (1.9,1.5);
\draw [blue!80!black] (1.6,-.5) -- (1.6,0) .. controls (1.6,0.5) and (1.6,0.45) .. (2,0.45) .. controls (2.5,0.45) and (2.7,0.5) .. (3.2,0.75) .. controls (3.5,0.9) .. (3.8,0.75) .. controls (4.5,0.5) .. (4.9,0.75) .. controls (5.1,0.95) .. (5.1,1.2) -- (5.1,1.5);
\draw [blue!80!black] (1.9,-.5) -- (1.9,0.1) .. controls (1.9,0.5) and (2.7,0.5) .. (3.2,0.25) .. controls (3.5,0.1) .. (3.8,0.25) .. controls (4.3,0.4) and (4.3,0.45) .. (5,0.55) .. controls (5.7,0.6) and (5.9,0.7) .. (5.9,1) -- (5.9,1.5);
\draw [blue!80!black] (3.1,1.5) .. controls (3.1,1) and (3.9,1) .. (3.9,1.5);
\draw [blue!80!black] (3.1,-.5) .. controls (3.1,0) and (3.9,0) .. (3.9,-.5);
}
&\quad\rightarrow\quad
\Large\Graph{
\draw [blue!80!black] (1.1,1.5) -- (1.1,1) .. controls (1.1,0.5) and (1.9,0.5) .. (1.9,1) -- (1.9,1.5);
\draw [blue!80!black] (1.6,-.5) -- (1.6,0) .. controls (1.6,0.5) and (1.6,0.45) .. (2,0.45) .. controls (2.5,0.45) and (2.7,0.5) .. (3.2,0.75) .. controls (3.5,0.9) .. (3.8,0.75) .. controls (4.5,0.5) .. (4.9,0.75) .. controls (5.1,0.95) .. (5.1,1.2) -- (5.1,1.5);
\draw [blue!80!black] (1.9,-.5) -- (1.9,0.1) .. controls (1.9,0.5) and (2.7,0.5) .. (3.2,0.25) .. controls (3.5,0.1) .. (3.8,0.25) .. controls (4.3,0.4) and (4.3,0.45) .. (5,0.55) .. controls (5.7,0.6) and (5.9,0.7) .. (5.9,1) -- (5.9,1.5);
\draw [blue!80!black] (3.1,1.5) .. controls (3.1,1) and (3.9,1) .. (3.9,1.5);
\draw [blue!80!black] (3.1,-.5) .. controls (3.1,0) and (3.9,0) .. (3.9,-.5);
}
\quad\rightarrow\quad
\Large\Graph{
\draw[blue!80!black] (1,1) .. controls (1,0.5) and (2,0.5) .. (2,1);
\draw[blue!80!black] (3,0) .. controls (3,0.5) and (4,0.5) .. (4,0);
\draw[blue!80!black] (3,1) .. controls (3,0.5) and (4,0.5) .. (4,1);
\draw[blue!80!black] (1,0) .. controls (1,0.5) and (5,0.5) .. (5,1);
\draw[blue!80!black] (2,0) .. controls (2,0.5) and (6,0.5) .. (6,1);
}
\end{align*}

Let us denote this correspondence by $\alpha$. Notice that this assignment exactly corresponds to the functor from Proposition~\ref{P.Catalan} since what we do essentially is: look at the picture and replace every 
$\Graph{
\draw[fill=yellow] (1,0) -- (1,0.3) -- (1.5,0.5) -- (2,0.3) -- (2,0);
\draw[fill=yellow] (1,1) -- (1,0.7) -- (1.5,0.5) node {} -- (2,0.7) -- (2,1);
}$
by 
$
\Graph{
\draw[red!80!black] (1,0) -- (1,1);
\draw[red!80!black] (2,0) -- (2,1);
}
\times
\Graph{
\draw[blue!80!black] (1,0) .. controls (1,0.5) and (2,0.5) .. (2,0);
\draw[blue!80!black] (1,1) .. controls (1,0.5) and (2,0.5) .. (2,1);
}$
and every
$\Graph{
\draw[fill=yellow] (1,0) -- (1,0.3) -- (1.5,0.5) -- (2,0.3) -- (2,0);
\draw[fill=yellow] (1,1) -- (1,0.7) -- (1.5,0.5) node[fill=white] {} -- (2,0.7) -- (2,1);
}$
by
$\Graph{
\draw[red!80!black] (1,0) .. controls (1,0.5) and (2,0.5) .. (2,0);
\draw[red!80!black] (1,1) .. controls (1,0.5) and (2,0.5) .. (2,1);
}\times
\Graph{
\draw[blue!80!black] (1,0) -- (1,1);
\draw[blue!80!black] (2,0) -- (2,1);
}$.
We are not going to prove this formally here as this is not necessary to prove the theorem.

It remains to show that $\alpha$ is injective. We will do this by explicitly constructing its inverse. So, take a pair of non-crossing pairings $p,q\in\NCPair(k,l)$. We are going to construct the corresponding diagram $t\in\NCBipartEven(k,l)$ as follows.

Draw the two diagrams one on top of the other. Do it in such a way that no two wires would cross more than once. (It might happen that two points are paired together in both $p$ and $q$. In this case, the red string and the blue string are identical, which is fine.) This must always be possible. Indeed, instead of drawing the inputs on one line and outputs on some other line above, we could draw all the inputs and outputs on a single circle. Then the strings denoting the non-crossing pairings can be drawn using just line segments. Line segments can never cross more than once.
$$
\begin{matrix}
\displaystyle\Large\Graph{
\draw[red!80!black] (1,0) -- (1,1);
\draw[red!80!black] (2,0) -- (2,1);
\draw[red!80!black] (3,0) -- (3,1);
\draw[red!80!black] (4,0) -- (4,1);
\draw[red!80!black] (4.9,1) .. controls (4.9,0.5) and (6,0.5) .. (6,1);
}
&\quad\rightarrow\quad&
\begin{tikzpicture}[baseline={(0,-.5ex)}]
  \draw[gray] (0,0) circle (2em);
  \foreach \i/\letter in {1/A,2/B,3/C,4/D,5/E,6/F,7/G,8/H,9/I,10/J}
  {
    \coordinate (\letter) at ({360/10 * (11-\i) + 180}:2em);
  }
  \draw[red!80!black] (A) -- (J);
  \draw[red!80!black] (B) -- (I);
  \draw[red!80!black] (C) -- (H);
  \draw[red!80!black] (D) -- (G);
  \draw[red!80!black] (E) -- (F);
\end{tikzpicture}\\\\
\displaystyle\Large\Graph{
\draw[blue!80!black] (1,1) .. controls (1,0.5) and (2,0.5) .. (2,1);
\draw[blue!80!black] (3,0) .. controls (3,0.5) and (4,0.5) .. (4,0);
\draw[blue!80!black] (3,1) .. controls (3,0.5) and (4,0.5) .. (4,1);
\draw[blue!80!black] (1,0) .. controls (1,0.5) and (5,0.5) .. (5,1);
\draw[blue!80!black] (2,0) .. controls (2,0.5) and (6,0.5) .. (6,1);
}
&\quad\rightarrow\quad&
\begin{tikzpicture}[baseline={(0,-.5ex)}]
  \draw[gray] (0,0) circle (2em);
  \foreach \i/\letter in {1/A,2/B,3/C,4/D,5/E,6/F,7/G,8/H,9/I,10/J}
  {
    \coordinate (\letter) at ({360/10 * (11-\i) + 180}:2em);
  }
  \draw[blue!80!black] (A) -- (B);
  \draw[blue!80!black] (C) -- (D);
  \draw[blue!80!black] (E) -- (J);
  \draw[blue!80!black] (F) -- (I);
  \draw[blue!80!black] (G) -- (H);
\end{tikzpicture}
\end{matrix}
\qquad\rightarrow\qquad
\begin{tikzpicture}[baseline={(0,-.5ex)}]
  \draw[gray] (0,0) circle (2em);
  \foreach \i/\letter in {1/A,2/B,3/C,4/D,5/E,6/F,7/G,8/H,9/I,10/J}
  {
    \coordinate (\letter) at ({360/10 * (11-\i) + 180}:2em);
  }
  \draw[red!80!black] (A) -- (J);
  \draw[red!80!black] (B) -- (I);
  \draw[red!80!black] (C) -- (H);
  \draw[red!80!black] (D) -- (G);
  \draw[red!80!black] (E) -- (F);
  \draw[blue!80!black] (A) -- (B);
  \draw[blue!80!black] (C) -- (D);
  \draw[blue!80!black] (E) -- (J);
  \draw[blue!80!black] (F) -- (I);
  \draw[blue!80!black] (G) -- (H);
\end{tikzpicture}
\quad\rightarrow\quad
\Large\Graph{
\draw[red!80!black] (1,-1) -- (1,2);
\draw[red!80!black] (2,-1) -- (2,2);
\draw[red!80!black] (3,-1) -- (3,2);
\draw[red!80!black] (4,-1) -- (4,2);
\draw[red!80!black] (5,2) .. controls (5,1.5) and (6,1.5) .. (6,2);
\draw[blue!80!black] (1,2) .. controls (1,1.5) and (2,1.5) .. (2,2);
\draw[blue!80!black] (3,-1) .. controls (3,-.5) and (4,-.5) .. (4,-1);
\draw[blue!80!black] (3,2) .. controls (3,1.5) and (4,1.5) .. (4,2);
\draw[blue!80!black] (1,-1) .. controls (1,1) and (5,0.5) .. (5,2);
\draw[blue!80!black] (2,-1) .. controls (2,0.5) and (6,0) .. (6,2);
}
$$

Now for each of the diagrams separately, we do a similar thing as we did before. We colour every other face of $p$ by red and every other face of $q$ by blue. There may be some regions, where the two colours overlap. These will be purple.

So, the whole diagram in total is now divided into white, red, blue and purple regions. Put a black vertex into every red region and a white vertex into every blue region. Finally, connect all vertices to the corners of their regions (the crossing points of red and blue lines). In the case when some blue and red line overlap (and hence do not cross with anything), you should simply connect its endpoints by a black line. Actually, even in the situation when the red and blue do cross, it will be convenient (for the purpose of Lemma~\ref{L.CatInj}) to allow the crossing point to be actually a curve segment (the red and blue meet, then they go together for a while, then they separate again). In that case, the black line should follow this curve segment.
$$
\Large\Graph{
\draw[red!80!black] (1,0) -- (1,1);
\draw[red!80!black] (2,0) -- (2,1);
\draw[red!80!black] (3,0) -- (3,1);
\draw[red!80!black] (4,0) -- (4,1);
\draw[red!80!black] (5,1) .. controls (5,0.5) and (6,0.5) .. (6,1);
}
\to
\Graph{
\fill[red,opacity=.5] (1,0)--(1,1)--(2,1)--(2,0);
\fill[red,opacity=.5] (3,0)--(3,1)--(4,1)--(4,0);
\draw[red!80!black] (1,0) -- (1,1);
\draw[red!80!black] (2,0) -- (2,1);
\draw[red!80!black] (3,0) -- (3,1);
\draw[red!80!black] (4,0) -- (4,1);
\draw[red!80!black,fill=red,fill opacity=.5] (5,1) .. controls (5,0.5) and (6,0.5) .. (6,1);
}
\qquad
\Graph{
\draw[blue!80!black] (1,1) .. controls (1,0.5) and (2,0.5) .. (2,1);
\draw[blue!80!black] (3,0) .. controls (3,0.5) and (4,0.5) .. (4,0);
\draw[blue!80!black] (3,1) .. controls (3,0.5) and (4,0.5) .. (4,1);
\draw[blue!80!black] (1,0) .. controls (1,1) and (5,0.1) .. (5,1);
\draw[blue!80!black] (2,0) .. controls (2,0.9) and (6,0) .. (6,1);
}
\to
\Graph{
\fill[blue,opacity=.5] (1,0) .. controls (1,1) and (5,0.1) .. (5,1) -- (6,1) .. controls (6,0) and (2,0.9) .. (2,0);
\draw[blue!80!black,fill=blue,fill opacity=.5] (1,1) .. controls (1,0.5) and (2,0.5) .. (2,1);
\draw[blue!80!black,fill=blue,fill opacity=.5] (3,0) .. controls (3,0.5) and (4,0.5) .. (4,0);
\draw[blue!80!black,fill=blue,fill opacity=.5] (3,1) .. controls (3,0.5) and (4,0.5) .. (4,1);
\draw[blue!80!black] (1,0) .. controls (1,1) and (5,0.1) .. (5,1);
\draw[blue!80!black] (2,0) .. controls (2,0.9) and (6,0) .. (6,1);
}
$$
$$
\Large\Graph{
\draw[red!80!black] (1,-1) -- (1,2);
\draw[red!80!black] (2,-1) -- (2,2);
\draw[red!80!black] (3,-1) -- (3,2);
\draw[red!80!black] (4,-1) -- (4,2);
\draw[red!80!black] (5,2) .. controls (5,1.5) and (6,1.5) .. (6,2);
\draw[blue!80!black] (1,2) .. controls (1,1.5) and (2,1.5) .. (2,2);
\draw[blue!80!black] (3,-1) .. controls (3,-.5) and (4,-.5) .. (4,-1);
\draw[blue!80!black] (3,2) .. controls (3,1.5) and (4,1.5) .. (4,2);
\draw[blue!80!black] (1,-1) .. controls (1,1) and (5,0.5) .. (5,2);
\draw[blue!80!black] (2,-1) .. controls (2,0.5) and (6,0) .. (6,2);
}
\quad\to\quad
\Graph{
\fill[red,opacity=.5] (1,-1)--(1,2)--(2,2)--(2,-1);
\fill[red,opacity=.5] (3,-1)--(3,2)--(4,2)--(4,-1);
\draw[red!80!black] (1,-1) -- (1,2);
\draw[red!80!black] (2,-1) -- (2,2);
\draw[red!80!black] (3,-1) -- (3,2);
\draw[red!80!black] (4,-1) -- (4,2);
\draw[red!80!black,fill=red,fill opacity=.5, name path=r5] (5,2) .. controls (5,1.5) and (6,1.5) .. (6,2);
\fill[blue,opacity=.5] (1,-1) .. controls (1,1) and (5,0.5) .. (5,2) -- (6,2) .. controls (6,0) and (2,0.5) .. (2,-1);
\draw[blue!80!black,fill=blue,fill opacity=.5] (1,2) .. controls (1,1.5) and (2,1.5) .. (2,2);
\draw[blue!80!black,fill=blue,fill opacity=.5] (3,-1) .. controls (3,-.5) and (4,-.5) .. (4,-1);
\draw[blue!80!black,fill=blue,fill opacity=.5] (3,2) .. controls (3,1.5) and (4,1.5) .. (4,2);
\draw[blue!80!black] (1,-1) .. controls (1,1) and (5,0.5) .. (5,2);
\draw[blue!80!black] (2,-1) .. controls (2,0.5) and (6,0) .. (6,2);
}
\quad\to\quad
\Graph{
\fill[red,opacity=.5] (1,-1)--(1,2)--(2,2)--(2,-1);
\fill[red,opacity=.5] (3,-1)--(3,2)--(4,2)--(4,-1);
\draw[red!80!black, name path=r1] (1,-1) -- (1,2);
\draw[red!80!black, name path=r2] (2,-1) -- (2,2);
\draw[red!80!black, name path=r3] (3,-1) -- (3,2);
\draw[red!80!black, name path=r4] (4,-1) -- (4,2);
\draw[red!80!black,fill=red,fill opacity=.5, name path=r5] (5,2) .. controls (5,1.5) and (6,1.5) .. (6,2);
\fill[blue,opacity=.5] (1,-1) .. controls (1,1) and (5,0.5) .. (5,2) -- (6,2) .. controls (6,0) and (2,0.5) .. (2,-1);
\draw[blue!80!black,fill=blue,fill opacity=.5, name path=b1] (1,2) .. controls (1,1.5) and (2,1.5) .. (2,2);
\draw[blue!80!black,fill=blue,fill opacity=.5, name path=b2] (3,-1) .. controls (3,-.5) and (4,-.5) .. (4,-1);
\draw[blue!80!black,fill=blue,fill opacity=.5, name path=b3] (3,2) .. controls (3,1.5) and (4,1.5) .. (4,2);
\draw[blue!80!black, name path=b4] (1,-1) .. controls (1,1) and (5,0.5) .. (5,2);
\draw[blue!80!black, name path=b5] (2,-1) .. controls (2,0.5) and (6,0) .. (6,2);
\draw (1,2) -- (1.5,1) node{} -- (1,-1);
\draw[name intersections={of=r2 and b4, by=A}] (2,2) -- (1.5,1) -- (A) -- (2.5,0.2) -- (2,-1);
\draw[name intersections={of=r3 and b4, by=B}, name intersections={of=r3 and b5, by=C}] (3,2) -- (3.5,1.2) node{} -- (B) -- (2.5,0.2)node[fill=white]{} -- (C) -- (3.5,-.3) node{} -- (3,-1);
\draw[name intersections={of=r4 and b4, by=D}, name intersections={of=r4 and b5, by=E}] (4,2) -- (3.5,1.2) -- (D) -- (4.7,1) -- (E) -- (3.5,-.3) -- (4,-1);
\draw (5,2) -- (4.7,1) node[fill=white]{} -- (6,2);
}
$$

First of all, note that thanks to the requirement that the red and blue line cannot cross more than once (and red cannot cross with red, blue cannot cross with blue), the resulting diagram does not depend on the exact way of how do we draw the two pairings. The reason is that the fact whether a given red line crosses a given blue one and in what order the crossings on a given line follow each other can be characterized combinatorially and does not depend on the particular drawing.

Now, the resulting bilabelled graph is clearly planar. The vertex colouring is surely proper as every black line goes through an intersection of a red and blue line, so it connects a red region with a blue one, so a black vertex with a white one. The degree of each vertex is clearly larger than two. Let us show that the degree is also even. All the crossings of red and blue lines in the picture are bordering with all four kinds of regions -- red, blue, white, and purple. In particular, this means that the edges surrounding every blue or red region always border with white or purple region in an alternating way. Hence, there must be an even number of the bordering edges, hence an even number of corners, hence an even number of black strings leaving the vertex.

Let us denote this correspondence $\beta\colon\NCPair\times\NCPair\to\NCBipartEven$.

\begin{lem}\label{L.CatInj}
It holds that $\beta\circ\alpha=\id$.
\end{lem}
\begin{proof}
Recall that in the definition of $\alpha$, we said that the blue and red lines of the newly constructed $p$ and $q$ exactly follow the black lines except for a small neighbourhood of the black and white vertices. Our claim is that the purple regions in the diagram after applying $\beta$ are contained in the yellow regions defined by $\alpha$. Moreover, they coincide except for the neighbourhoods of the black and white vertices. Well, by definition the leftmost region is always white. Now since the blue and red lines go always together with the black ones, this means that if you stand in the white region and cross the black line, you must enter the region painted both red and blue, i.e.\ purple. And vice versa. Consequently, the purple regions satisfy the defining property of the yellow regions.

Now what happens in the neighbourhood of the black and white vertices. Consider e.g.\ the black ones. Since the red line is supposed to go through the white region, the black vertex is behind the red line from the perspective of the white region, so must be painted red. On the other hand, the blue line goes through the yellow region, so the black vertex is behind the blue line from the perspective of the yellow, i.e. the purple region. Hence, it is \emph{not} painted blue. So, the black vertices are indeed in the red regions. Similarly, white vertices are in the blue regions. This is exactly, where the mapping $\beta$ would draw them.

Moreover, by definition of $\alpha$, the neighbourhoods of the black and white vertices do not contain any red and blue lines in their interior, so they are painted by a single colour. Consequently, the number of red and blue regions indeed equals the number of black and white vertices ($\beta\circ\alpha$ does not create any extra vertices).

Finally, in the small neighbourhood of every vertex, all the black lines that occur must clearly enter the vertex. Well this is again exactly what $\beta$ does. Outside the neighbourhoods, $\alpha$ maps black lines to red and purple on top of each other and $\beta$ does the reverse, so, in particular, $\beta\circ\alpha$ preserves the adjacency of vertices. This also applies for the situation when two points are paired in the original diagram, so there is no black or white vertex on the corresponding string.

This proves that the mapping $\beta$ indeed exactly recovers the original graph on which we first applied $\alpha$ which is what we wanted to show.
\end{proof}

\begin{lem}\label{L.Catalan1}
It holds that $\dim\NCBipartEven_{\delta^2}(k,l)\le(\dim\NCPair(k,l))^2$.
\end{lem}
\begin{proof}
As a direct consequence of Lemma~\ref{L.CatInj} we have that $\alpha$ is injective and hence we must have such an inequality for the dimensions.
\end{proof}

\begin{rem}
The reader might try to think about a similar proof for the equality $\alpha\circ\beta=\id$. Alternatively, it already follows from the fact that we essentially already know that $\alpha$ is surjective. See below.
\end{rem}

\begin{proof}[Proof of Theorem~\ref{T.Catalan}]
We can reuse most of the proof of Proposition~\ref{P.Catalan}. We construct the functor $F\colon\NCBipartEven_{\delta^2}\to\NCPair_\delta\tiltimes\NCPair_\delta$ the same way and prove its surjectivity the same way. The only thing that remains to prove is the injectivity of the functor. But since we already proved that it is surjective, the injectivity follows from Lemma~\ref{L.Catalan1}, where we showed that $\dim\NCBipartEven_{\delta^2}(k,l)\le\dim(\NCPair_\delta(k,l))^2=\dim((\NCPair_\delta\tiltimes\NCPair_\delta)(k,l))$.
\end{proof}

Finally, let us mention a couple of consequences this theorem has.

\begin{cor}\label{C.Catalan}
It holds that
$$\dim\NCBipartEven_{\delta^2}(k,l)=\begin{cases}C_{(k+l)/2}^2&\text{if $k+l$ is even}\\0&\text{if $k+l$ is odd.}\end{cases}$$
\end{cor}
\begin{proof}
Follows from the fact that $\dim\NCPair_\delta(k,l)=C_{(k+l)/2}$ if $k+l$ is even and 0 otherwise.
\end{proof}

\begin{cor}\label{C.NoNegligible}
The category $\NCBipartEven_{\delta^2}$ has negligible morphisms if and only if $\delta^2=4\cos^2(j\pi/l)$, $j=1,\dots,l-1$, $l>1$. In particular, any fibre functor on $\NCBipartEven_N$, $N\ge 4$ is injective.
\end{cor}
\begin{proof}
It is known \cite{GW03} (see also \cite[Fig.~1]{FM21}) that $\NCPair_\delta$ has negligible morphisms if and only if $\delta=2\cos(j\pi/l)$. We need to show that $\NCPair_\delta\tiltimes\NCPair_\delta$ has negligible morphisms under the same condition. The rest follows from the monoidal isomorphism.

Consider any $k\in2\N_0$. We need to show that the bilinear form $(p_1,p_2)=p_1\Ctrans p_2=R_k^\dag(p_1\otimes p_2)$, where $R_k=
\Graph{
	\draw (1,1) -- (1,0) -- (6,0) -- (6,1);
	\draw (1.5,1) -- (1.5,0.2) -- (5.5,0.2) -- (5.5,1);
	\draw (3,1) -- (3,0.5) -- (4,0.5) -- (4,1);
	\node[draw=none,fill=none] at (2.25,.8) {$\scriptstyle\dots$};
	\node[draw=none,fill=none] at (4.75,.8) {$\scriptstyle\dots$};
}$,
is non-degenerate in $\NCPair_\delta(0,k)$ if and only if analogous bilinear form on $(\NCPair_\delta\tiltimes\NCPair_\delta)(0,k)$ is non-degenerate. The latter bilinear form is given by
$$(p_1\times q_1,p_2\times q_2)=(p_1\times q_1)\Ctrans(p_2\times q_2)=(p_1\Ctrans p_2)\cdot(q_1\Ctrans q_2)$$

As we can see, if we choose a basis in $\NCPair_\delta(0,k)$ and denote the matrix associated to the bilinear form by $G$, then the matrix associated to $(\NCPair_\delta\tiltimes\NCPair_\delta)(0,k)$ is just $G\otimes G$, which is regular if and only if $G$ is regular.
\end{proof}

\section{Quantum symmetries}
\label{sec.qg}

\subsection{Quantum groups}
\label{secc.qgdef}

Quantum groups form a generalization of groups in non-commutative geometry in a similar way as quantum spaces generalize ordinary spaces. We will deal with the so-called \emph{compact quantum groups} as defined by Woronowicz \cite{Wor87}. To keep things simple, we are going to define only the orthogonal matrix version of quantum groups.

An~\emph{orthogonal compact matrix quantum group} is a pair $G=(\Alg,u)$, where $\Alg$ is a~$*$-algebra and $u=(u^i_j)\in M_N(\Alg)$ is a matrix with values in $\Alg$ such that
\begin{enumerate}
\item the entries $u^i_j$, $i,j=1,\dots, N$ generate $\Alg$,
\item the matrix $u$ is unitary and we have $\bar u:=(u^{i\,*}_j)_{i,j}=F^{-1}uF$ for some invertible matrix $F\in M_N(\C)$,
\item the map $\Delta\colon \Alg\to \Alg\otimes \Alg$ defined as $\Delta(u^i_j):=\sum_{k=1}^N u^i_k\otimes u^k_j$ extends to a~$*$-homomorphism.
\end{enumerate}


The algebra~$\Alg$ should be seen as the algebra of non-commutative functions defined on some non-commutative compact underlying space. For this reason, we often denote $\Alg=\Olg(G)$ even if $\Alg$ is not commutative. The matrix $u$ is called the \emph{fundamental representation} of~$G$.

The condition $\bar u=F^{-1}uF$ essentially means that $u$ is self-conjugated. Indeed, assuming\footnote{This assumption is only necessary if we require that the cup $\uppairpart$ and cap $\pairpart$ are adjoints of each other. In general, taking any invertible matrix $F$, we can define $R^{ij}=F_i^j$ and $[R\Ctrans]_{ij}=[F^{-1}]_i^j$. This is a pair of duality morphisms satisfying the snake equation such that $u^*=F\bar uF^{-1}$ for arbitrary matrix $u$.} that $F^{-1}=\bar F$ and considering the duality morphism $R\in\C^N\otimes\C^N$ given by $R^{ij}=F_i^j$, then the conjugate $u^*$ (defined in Section~\ref{secc.qhad}; not the adjoint $u^\dag$) is exactly given by $u^*=F\bar uF^{-1}$.


In the next section, we are going to describe two things. First, how quantum groups can be used to describe symmetries. Secondly, that we actually do not have to deal with these $*$-algebras. Instead, quantum groups are equivalently described using their representation categories, which can be conveniently represented as diagrammatic categories (a.k.a. nice pictures -- a much better thing to deal with).

\subsection{Tannaka--Krein reconstruction and quantum automorphisms}

For a compact matrix quantum group $G=(\Olg(G),u)$, we say that $v\in M_n(\Olg(G))$ is a~representation of $G$ if $\Delta(v^i_j)=\sum_{k}v^i_k\otimes v^k_j$. The representation $v$ is called \emph{unitary} if it is unitary as a~matrix, i.e.\ $\sum_k v^i_kv^{j*}_k=\sum_k v^{k*}_iv^k_j=\delta_{ij}$. For instance, the fundamental representation $u$ is a unitary representation of $G$.

For two representations $v\in M_n(\Olg(G))$, $w\in M_m(\Olg(G))$ of $G$ we define the space of \emph{intertwiners}
$$\Mor(v,w)=\{T\colon \C^n\to\C^m\mid Tv=wT\}.$$
Since we are working with orthogonal compact matrix quantum groups only, it is actually enough to restrict our attention only to the tensor powers of $u$ since the entries of those representations already linearly span the whole $\Olg(G)$. So, we define a concrete category
$$\RCat_G(k,l):=\Mor(u^{\otimes k},u^{\otimes l})=\{T\colon (\C^N)^{\otimes k}\to(\C^N)^{\otimes l}\mid Tu^{\otimes k}=u^{\otimes l}T\}.$$

Conversely, we can reconstruct any compact matrix quantum group from its representation category \cite{Wor88,Mal18}:

\begin{thm}[Woronowicz--Tannaka--Krein]
\label{T.TK}
Let $\RCat\subset\Mat_N$ be a concrete category. Then there exists a unique orthogonal compact matrix quantum group $G$ such that $\RCat=\RCat_G$.
\end{thm}

We can write down the associated quantum group very concretely. The relations satisfied in the algebra $\Olg(G)$ will be exactly the intertwining relations:
$$\Olg(G)=\staralg(u^i_j,\;i,j=1,\dots,N\mid uu^\dag=1=u^\dag u,\; Tu^{\otimes k}=u^{\otimes l}T\;\forall T\in\RCat(k,l)).$$
If $S$ is a generating set of $\RCat_G$, we can actually use only the relations corresponding to the generators:
$$\Olg(G)=\staralg(u^i_j,\;i,j=1,\dots,N\mid uu^\dag=1=u^\dag u,\; Tu^{\otimes k}=u^{\otimes l}T\;\forall T\in S(k,l)).$$

As we already mentioned, quantum groups are used to describe (quantum) symmetries of some (quantum) structures. For instance, a quantum group $G$ is said to \emph{act} on a finite (quantum) space $X$ if there is a unital $*$-homomorphism $\alpha\colon C(X)\to C(X)\otimes\Olg(G)$ satisfying certain properties. The quantum automorphism group of $X$ is then defined to be the universal quantum group $G$ acting on $X$ (i.e.\ the largest possible acting faithfully).

Now the main point is that these certain properties can always be conveniently described in the categorical language. We want that the action to preserve the structure. That is, the action should commute with the structure maps. But commuting with the structure maps is nothing but some set of intertwiner relations. So, we can make the following vague definition:

Let $X$ be some (quantum) structure defined by structure maps $T_1,\dots,T_n$ with $T_i\colon l^2(X)^{\otimes k_i}\to l^2(X)^{\otimes l_i}$. Then the \emph{quantum automorphism group} of $X$ is the quantum group $\Aut^+X$ corresponding to the category $\RCat_{\Aut^+ X}=\langle T_1,\dots,T_n\rangle$.

For instance, a finite quantum space $X$ is given by a finite C*-algebra $C(X)$ (with multiplication $m\colon C(X)\otimes C(X)\to C(X)$) equipped with a certain functional $\eta^\dag\colon C(X)\to\C$. Hence, its quantum automorphism group $\Aut^+X$ is defined through $\RCat_{\Aut^+ X}=\langle m,\eta^\dag\rangle$. The convenient point about this definition is that this category is the image of $\NCPart_{\delta^2}$ under certain fibre functor. So, we can identify $m$ with $\Gfork$ and $\eta^\dag$ with $\Gsing$. See e.g.\ \cite{Ban99} or \cite[Section~4.3]{GQGraph} for details on this example. We can reformulate this particular example into a more explicit formal definition:
\begin{defn}
Let $X$ be a quantum space with a fixed orthonormal basis $(x_i)_{i=1}^N$ of $l^2(X)$. Denote $m_{ij}^k$ and $\eta_i$ the tensor entries of $m$ and $\eta^\dag$ in this basis, so $x_ix_j=\sum_k m_{ij}^k x_k$ and $\eta^\dag(x_i)=\eta_i$. We define the quantum automorphism group $\Aut^+X$ by
$$\Olg(\Aut^+X)=\staralg\left(u^i_j,i,j=1,\dots,N\,\middle|\,\begin{matrix}uu^\dag=1=u^\dag u,\;\sum_i\eta_iu^i_j=\eta_j\\\sum_{a,b}m^k_{ab}u^a_iu^b_j=\sum_c u^k_cm^c_{ij}\end{matrix}\right).$$
\end{defn}

If $X$ is the classical space of $N$ points, then $m_{ij}^k=\delta_{ijk}$ and $\eta_i=1$ and we obtain Wang's free quantum symmetric group $S_N^+$ \cite{Wan98}
$$\Olg(S_N^+)=\textstyle\staralg(u^i_j\mid (u^i_j)^2=u^i_j=u^{i*}_j,\;\sum_{j}u^i_j=1=\sum_iu^i_j).$$
Since the relations defining this quantum group are quite important, they deserved a name: A matrix $u$ (with possibly non-commutative entries) is called a \emph{magic unitary} or a \emph{quantum permutation matrix} if it is unitary and satisfies $(u^i_j)^2=u^i_j=u^{i*}_j,$ $\sum_{j}u^i_j=1=\sum_iu^i_j$.

Similarly, the quantum automorphism group of a finite graph was defined by Banica \cite{Ban05} and it amounts to adding the adjacency matrix to the category above. So, if $\GGr$ is a graph, then $\RCat_{\Aut^+\GGr}=\langle m,\eta^\dag,A\rangle$, where $m$ and $\eta^\dag$ correspond to the classical space and $A$ is the adjacency matrix. By the way, this category also has a nice diagrammatic realization \cite{MR20}. The explicit definition of the quantum automorphism group looks as follows:
$$\Olg(\Aut^+\GGr)=\textstyle\staralg(u^i_j\mid\text{$u$ is magic, }uA=Au).$$

There is also the concept of quantum graphs -- a quantum space $X$ equipped with a linear map $A\colon l^2(X)\to l^2(X)$ called the \emph{adjacency matrix} satisfying certain properties. Its quantum automorphism group is defined the same way -- as the quantum group $\Aut^+\GGr$ corresponding to the category $\RCat_{\Aut^+\GGr}=\langle m,\eta^\dag,A\rangle$.

\subsection{Hopf-bi-Galois objects and quantum isomorphisms}

In the classical world, the definitions of automorphisms and isomorphisms are closely related. The same should work for quantum isomorphisms. As we described above, a quantum automorphism of a finite quantum space $X$ is a unitary matrix $u\in M_n(\Alg)$ with entries in some $*$-algebra $\Alg$ such that $um=m(u\otimes u)$ and $\eta^\dag u=\eta^\dag$. So, given two finite quantum spaces $X$ and $X'$, a quantum isomorphism $X\to X'$ is simply a unitary matrix $u\in M_n(\Alg)$ such that $um=m'(u\otimes u)$ and $\eta^\dag u=\eta'^{\dag}$, where $m'$ and $\eta'^\dag$ denote the structure maps of $X'$.

This approach again has a categorical counterpart. Such a quantum isomorphism forms a so-called $\Aut^+ X$-$\Aut^+X'$-bi-Galois object. It was shown by Schauenburg \cite{Sch96} that the existence of a bi-Gaolis object is equivalent with having a monoidal equivalence between the corresponding representation categories. Let us reformulate the result in a more concrete way here:

\begin{thm}\label{T.Galois}
Suppose $\RCat=\langle T_1,\dots,T_n\rangle_N$ and $\RCat'=\langle T_1',\dots,T_n'\rangle_{N'}$ are two concrete categories such that $T_i\in\Mat_N(k_i,l_i)$ and $T_i'\in\Mat_{N'}(k_i,l_i)$. Then the map $T_i\mapsto T_i'$ extends to a monoidal $\dag$-isomorphism $\RCat\to\RCat'$ if and only if there exists a $*$-algebra $\Alg$ generated by the entries of a unitary matrix $u\in\Mat(N,N')\otimes \Alg$ satisfying $T_iu^{\otimes k_i}=u^{\otimes l_i}T_i'$.
\end{thm}
\begin{proof}
The easy direction is from right to left. Having such a matrix $u$, we can define a functor $F\colon\RCat\to\RCat'$ by $F(T)=u^{\dag\,\otimes l}Tu^{\otimes k}$ for any $T\in\RCat(k,l)$. It is straightforward to check that $F$ is indeed a monoidal unitary functor and that $F(T_i)=F(T_i')$.

For the opposite direction, see \cite[Theorem~2.3.11]{NT13}.
\end{proof}

\begin{rem}
Note the similarity of the above theorem with Tannaka--Krein duality (Theorem~\ref{T.TK}). In a sense this theorem can be seen as a generalization of Tannaka--Krein for \emph{quantum groupoids}. See e.g.\ \cite{BicHG} for more details on this viewpoint.
\end{rem}

Hence, we can say that two algebraic structures $X$ and $X'$ with structure maps $T_1,\dots,T_n$ and $T_1',\dots,T_n'$ are \emph{quantum isomorphic} if and only if the assignment $T_i\mapsto T_i'$ extends to a monoidal equivalence $\RCat_{\Aut^+X}\to\RCat_{\Aut^+X'}$. In particular, two quantum spaces $X$ and $X'$ are quantum isomorphic if and only if there is a monoidal isomorphism $\langle m,\eta^\dag\rangle\to\langle m',\eta'^\dag\rangle$ mapping $m\mapsto m'$, $\eta^\dag\mapsto\eta'^\dag$.

Actually, it holds that two quantum spaces are quantum isomorphic if and only if they have the same \emph{categorical dimension} $\delta^2=\eta^\dag\eta$. (So, for instance, the classical space $X_{n^2}=\{1,\dots,n^2\}$ is quantum isomorphic to the quantum space of $n\times n$ matrices $M_n$. We have seen the corresponding monoidal equivalence in Remark~\ref{R.Mn}.) 

\begin{prop}
Two finite quantum spaces $X$ and $X'$ are quantum isomorphic if and only if $\delta=\delta'$.
\end{prop}
\begin{proof}
By the proof of Proposition~\ref{P.FibreNCPart1} the category $\langle m,\eta^\dag\rangle$ is the image of $\NCPart_{\delta^2}$ under an appropriate fibre functor. Since the category $\NCPart_{\delta^2}$ is pure, by Proposition~\ref{P.pure} all fibre functors must have the same kernel, so their images must be monoidally isomorphic. Conversely, no monoidal isomorphism can map $\eta\mapsto\eta'$ if $\delta^2=\eta^\dag\eta\neq\eta'^\dag\eta'=\delta'^2$.
\end{proof}

We will use the same argumentation to prove analogous result for Hadamard matrices in Theorem~\ref{T.hadamard}.

As another example, suppose we have two classical graphs on $N$ vertices given by adjacency matrices $A$ and $A'$. We define them to be \emph{quantum isomorphic} by the following two equivalent conditions:
\begin{enumerate}
\item There is a magic unitary $u\in M_N(\Alg)$ such that $uA=A'u$.
\item There is a monoidal equivalence of categories $\langle m,\eta^\dag,A\rangle\to\langle m,\eta^\dag,A'\rangle$ mapping $m\mapsto m$, $\eta^\dag\mapsto\eta^\dag$, $A\mapsto A'$, where $m$ and $\eta^\dag$ correspond to the classical space of $N$ points.
\end{enumerate}
See also \cite{BCE+20} for more details on quantum isomorphisms of (quantum) graphs and \cite{MR20} for a remarkable combinatorial characterization of quantum isomorphisms of graphs.

\section{Quantum groups corresponding to $\NCBipartEven_N$}
\label{sec.qgH}

\subsection{Proving non-classicality}

In this section, we show that the quantum groups corresponding to $\NCBipartEven_N$ are not groups, but proper quantum groups.

The following lemma actually follows from \cite[Example~4.9]{GD4}, but we prove it in a more straightforward way using the results of Section~\ref{sec.cat}.
\begin{lem}
We have that $\NCBipartEven_N(2,2)=\spanlin\{\Paabb,\Pabba,\Gconnecter,\Gwhite\Gconnecter\}$.
\end{lem}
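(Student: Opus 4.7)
The plan is to combine the dimension count from Lemma~\ref{L.Catalan} with an explicit computation of the images of the four listed diagrams under the monoidal isomorphism $F\colon\NCBipartEven_{\delta^2}\to\NCPair_\delta\tiltimes\NCPair_\delta$ from Theorem~\ref{T.Catalan}. Lemma~\ref{L.Catalan} immediately gives $\dim\NCBipartEven_N(2,2)=C_2^2=4$, so it suffices to verify that the four morphisms in the statement are linearly independent; once that is done, they must span and thus form a basis of the morphism space.

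To establish linear independence, I would trace the generators through the three functors composing $F$ (Proposition~\ref{P.FunctorHad}, Lemma~\ref{L.FNCBipart}, and the coloring isomorphism of Lemma~\ref{L.FPair'}). The identity is preserved, so $F(\Paabb)=\Paabb\times\Paabb$. Since $F(\pairpart)=\pairpart\times\pairpart$ and $\Pabba$ is the composition of a cup and a cap, one obtains $F(\Pabba)=\Pabba\times\Pabba$. For the connectors, one first sends $\Gconnecter\mapsto\TLconnecter$ and $\Gwhite\Gconnecter\mapsto\LTconnecter$ via Remark~\ref{R.FNCBipart}, then applies the red/blue coloring convention from the proof of Lemma~\ref{L.FPair'}. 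In $\TLconnecter$ the two outer (red) strands are parallel while the two inner (blue) strands form a cap-cup, giving $F(\Gconnecter)=\Paabb\times\Pabba$; for $\LTconnecter$ the roles swap, yielding $F(\Gwhite\Gconnecter)=\Pabba\times\Paabb$.

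These four images $\Paabb\times\Paabb,\ \Pabba\times\Pabba,\ \Paabb\times\Pabba,\ \Pabba\times\Paabb$ are precisely the natural tensor basis of $\NCPair_\delta(2,2)\otimes\NCPair_\delta(2,2)$ (recall that $\{\Paabb,\Pabba\}$ is a basis of the two-dimensional space $\NCPair_\delta(2,2)$). They are therefore linearly independent, and since $F$ is an isomorphism their preimages $\Paabb,\Pabba,\Gconnecter,\Gwhite\Gconnecter$ are linearly independent in $\NCBipartEven_N(2,2)$. Together with $\dim\NCBipartEven_N(2,2)=4$ this proves that these four morphisms form a basis.

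The main obstacle is the bookkeeping needed to identify $F(\Gconnecter)$ and $F(\Gwhite\Gconnecter)$, since one must correctly track the alternating red/blue coloring used in Lemma~\ref{L.FPair'} through several intermediate categories. A purely combinatorial alternative would be to enumerate reduced diagrams via Proposition~\ref{P.NCBipartEven}: a simple planar bipartite bilabelled graph with two inputs, two outputs, and all vertices of even degree not equal to $0$ or $2$ either has no internal vertex (giving $\Paabb$ or $\Pabba$) or exactly one internal tetravalent vertex connecting all four external strings (giving $\Gconnecter$ or $\Gwhite\Gconnecter$); to lift this enumeration of reduced diagrams to a statement about the morphism space one would still need faithfulness, most easily obtained from the fibre functor of Example~\ref{E.transpose}.
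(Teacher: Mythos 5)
Your proof is correct and follows essentially the same route as the paper: the paper's own proof likewise consists of noting that the four diagrams obviously lie in $\NCBipartEven_N(2,2)$ and that $\dim\NCBipartEven_N(2,2)=C_2^2=4$ by Lemma~\ref{L.Catalan}, and your computation of the images under $F$ (which the paper records as $F(\Gconnecter)=\Pabba\times\Paabb$, $F(\Gwhite\Gconnecter)=\Paabb\times\Pabba$ in the proof of Proposition~\ref{P.Catalan}) merely makes the linear-independence step explicit. One small quibble: you have the labels swapped relative to the paper's convention, in which $\Paabb$ denotes the Temperley--Lieb cap-cup generator and $\Pabba$ the identity --- harmless here, since the claimed set is symmetric in the two.
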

\begin{proof}
The inclusion $\supset$ is obvious, so it is enough to show that the dimension is correct $\dim\NCBipartEven_N(2,2)=4$. But this follows directly from Corollary~\ref{C.Catalan}.
\end{proof}

\begin{prop}
Consider $N>2$. Then for any fibre functor $F\colon\Bipart_N\to\Mat$ we have that $F(\crosspart)\not\in F(\NCBipartEven_N)$.
\end{prop}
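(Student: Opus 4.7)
The plan is to assume for contradiction that $F(\crosspart)\in F(\NCBipart_N)(2,2)$ and to derive a contradiction from a direct matrix-element computation. Since $\Bipart_N$ is a symmetric monoidal $\dagger$-category with symmetry $\crosspart$ and $F$ is monoidal, $F(\crosspart)$ is forced to be the flip map on $F(1)^{\otimes 2}=(\C^N)^{\otimes 2}$; Proposition~\ref{P.FibreBipart} then identifies $F$ (up to change of basis) with $F_H$ for some real Hadamard matrix $H$ of order $N$, so the existence of such an $F$ together with $N>2$ forces $N\ge 4$.

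First I would enumerate the reduced diagrams in $\NCBipart_N(2,2)$, i.e.\ the simple planar bipartite bilabelled graphs with four external strings and every internal vertex of degree at least three. A short case analysis on the number of internal vertices (zero, one, two, with three or more forcing a multi-edge that reduces further) produces exactly eight such diagrams: the identity, $\Paabb$, the two degree-four spiders $\Gconnecter$ and $\Gwhite\Gconnecter$, and four ``mixed'' diagrams in which a degree-three black vertex is joined by a single edge to a degree-three white vertex, each carrying two external strings. The four configurations correspond to the four ways of splitting the cyclic sequence of external strings into two consecutive pairs.

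Next I would compute $\langle e_i\otimes e_j,\,F_H(T)(e_k\otimes e_l)\rangle$ for each of these eight diagrams. Every diagram other than $\Gwhite\Gconnecter$ contains at least one black spider or standard-basis pairing, whose Kronecker-delta structure forces the entry to vanish unless some coincidence among $\{i,j,k,l\}$ holds; an explicit contraction shows that the respective conditions are $(i{=}k,\,j{=}l)$, $(i{=}j,\,k{=}l)$, $i{=}j{=}k{=}l$, $k{=}l$, $i{=}j$, $j{=}l$ and $i{=}k$. By contrast, $F_H(\Gwhite\Gconnecter)$ has entry $N^{-2}S_{ijkl}$ where $S_{ijkl}:=\sum_p H_{ip}H_{jp}H_{kp}H_{lp}$, which can be nonzero even when $i,j,k,l$ are pairwise distinct.

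To prevent $F_H(\Gwhite\Gconnecter)$ from absorbing the flip's off-diagonal entries, I would establish the moment identity
\[\sum_{i,j,k,l \text{ pairwise distinct}} S_{ijkl}^2 \;=\; N^3(N-1)(N-2),\]
by setting $a_i:=H_{ip}H_{iq}$, using $\sum_i a_i=N\delta_{pq}$ together with $a_i^2=1$, and reading off the fourth elementary symmetric polynomial of the $a_i$'s via Newton's identities. For $N\ge 3$ the right-hand side is strictly positive, so some $S_{ijkl}$ is nonzero at a pairwise-distinct position; since the flip vanishes there, the coefficient of $\Gwhite\Gconnecter$ in any hypothetical decomposition must be zero. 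But then at the entry $(i,j,k,l)=(i,j,j,i)$ with $i\ne j$ every one of the seven remaining coincidence conditions fails, so all seven diagrams contribute zero, whereas the flip contributes $\delta_{ii}\delta_{jj}=1$. This contradiction finishes the proof. The main obstacle is the tensor computation for the two ``horizontal'' mixed configurations, whose spider-contractions do not factor through a single intermediate vector space and require a careful direct calculation to confirm the vanishing patterns $j{=}l$ and $i{=}k$.
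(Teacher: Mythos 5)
There is a genuine gap at the very first step. You assert that, since $\crosspart$ is a symmetry of $\Bipart_N$ and $F$ is monoidal, $F(\crosspart)$ ``is forced to be the flip map.'' A monoidal $\dagger$-functor need not carry a symmetry to the symmetry of the target, and nothing in the definition of a fibre functor requires it: the relations imposed on $\crosspart$ in $\Bipart_N$ (it is a self-adjoint involution absorbed by all spiders) do not obviously pin down its image. This is exactly why Proposition~\ref{P.FibreBipart} and Corollary~\ref{C.NCBQ} carry the explicit hypothesis ``such that $F(\crosspart)$ is the flip map,'' and the $SO_4^{-1}$ discussion shows that images of these diagrammatic categories can contain involutive braidings that are not the flip. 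Without a proof of your claim, your argument only covers the fibre functors $F_H$ attached to Hadamard matrices, not \emph{any} fibre functor as the statement demands. The paper's proof is built to sidestep this: it forms the Gram matrix of the five diagrams $\Paabb$, $\Pabba$, $\Gconnecter$, $\Gwhite\Gconnecter$, $\crosspart$ with respect to $\langle a,b\rangle=\CTr(a^\dag b)$, evaluates each entry as a closed diagram using only the relations of $\Bipart_N$ (so the entries are scalars independent of $F$), and reads off the determinant $N^3(N-1)^4(N-2)\neq 0$ for $N>2$; positive-definiteness of the Hilbert--Schmidt form under any fibre functor then yields linear independence of the five images. You should either supply the missing justification, restate your result under the flip hypothesis (which is all Corollary~\ref{C.NCBQ} actually uses), or switch to the intrinsic Gram-matrix computation.

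Granting the reduction to $F=F_H$, the remainder of your argument is correct and takes a genuinely different, more computational route. The enumeration of the eight reduced diagrams of $\NCBipart_N(2,2)$, the vanishing patterns of their matrix entries, the fourth-moment identity $\sum_{i,j,k,l\text{ distinct}}S_{ijkl}^2=N^3(N-1)(N-2)$ (which I checked via Newton's identities applied to $a_i=H_{ip}H_{iq}$, and which is vacuously consistent at $N=3$ since no Hadamard matrix of that order exists), and the final evaluation at $(i,j,j,i)$ all hold up. Two things this buys over the paper's argument: it excludes the flip from the span of all of $\NCBipart_N(2,2)$, whereas the paper's proof only treats the four-element spanning set of $\NCBipartEven_N(2,2)$ (so your version matches the literal wording of the statement more closely), and it exhibits a concrete witness entry rather than a determinant. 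What it costs is generality in the fibre functor, which is precisely where the gap sits.
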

\begin{proof}
We need to show that $F(\crosspart)$ is not a linear combination of the elements $F(\Paabb)$, $F(\Pabba)$, $F(\Gconnecter)$, $F(\Gwhite\Gconnecter)$, which span $F(\NCBipartEven_N(2,2))$ according to the lemma above. Hence, we must prove that these five elements are linearly independent. This is true if and only if the Gram matrix of these elements with respect to the Hilbert--Schmidt inner product $\langle f,g\rangle=\Tr(f^\dag g)$ is regular. The Hilbert--Schmidt inner product can be expressed solely inside the category without explicit knowledge of the fibre functor $F$ as
$$\det
{
\def\entry#1{
	\scriptscriptstyle\vrule height 0.9em depth 0.65em width 0pt
	\Graph{#1
		\draw (2,1.5) -- (2,1.7) -- (2.5,1.7) -- (2.5,-0.7) -- (2,-0.7) -- (2,-0.5);
		\draw (1,1.5) -- (1,1.9) -- (3,1.9) -- (3,-.9) -- (1,-.9) -- (1,-.5);}}
\def\row#1{
\entry{\draw(1,-.5) -- (1,-.2) -- (2,-.2) -- (2,-.5);\draw(1,0.5) -- (1,0.2) -- (2,0.2) -- (2,0.5);                                            #1}&
\entry{\draw(1,-.5) -- (1,0.5);\draw(2,-.5) -- (2,0.5);                                                                                        #1}&
\entry{\draw (1,-.5) -- (1,-.2) -- (1.5,0) -- (2,-.2) -- (2,-.5);\draw (1,0.5) -- (1,0.2) -- (1.5,0) node {} -- (2,0.2) -- (2,0.5);            #1}&
\entry{\draw (1,-.5) -- (1,-.2) -- (1.5,0) -- (2,-.2) -- (2,-.5);\draw (1,0.5) -- (1,0.2) -- (1.5,0) node[fill=white] {} -- (2,0.2) -- (2,0.5);#1}&
\entry{\draw (1,-.5) -- (2,0.5); \draw (2,-.5) -- (1,0.5);                                                                                     #1}\\
}
\begin{pmatrix}
\row{\draw(1,0.5) -- (1,0.8) -- (2,0.8) -- (2,0.5);\draw(1,1.5) -- (1,1.2) -- (2,1.2) -- (2,1.5);}
\row{\draw(1,0.5) -- (1,1.5);\draw(2,0.5) -- (2,1.5);}
\row{\draw (1,0.5) -- (1,0.8) -- (1.5,1) -- (2,0.8) -- (2,0.5);\draw (1,1.5) -- (1,1.2) -- (1.5,1) node {} -- (2,1.2) -- (2,1.5);}
\row{\draw (1,0.5) -- (1,0.8) -- (1.5,1) -- (2,0.8) -- (2,0.5);\draw (1,1.5) -- (1,1.2) -- (1.5,1) node[fill=white] {} -- (2,1.2) -- (2,1.5);}
\row{\draw (1,0.5) -- (2,1.5); \draw (2,0.5) -- (1,1.5);}
\end{pmatrix}
}
=\det
\begin{pmatrix}
N^2&N&N&N&N\\
N&N^2&N&N&N\\
N&N&N&1&N\\
N&N&1&N&N\\
N&N&N&N&N^2
\end{pmatrix}
=N^3(N-1)^4(N-2)
$$

So, the Gram matrix is regular if and only if $N\neq 0,1,2$.
\end{proof}

\begin{rem}
The assumption $N>2$ seems too restrictive as we could have just excluded the possibilities $N=0,1,2$. But actually there is no fibre functor unless $N>2$ or $N=1,2$ (the category is undefined for $N=0$). Indeed, as we mentioned in Remark~\ref{R.FibreExistence}, the fibre functor $F$ can exist only if the above Gram matrix is positive semidefinite. The Gram matrix is singluar if and only if $N=1,2$ -- in this case the fibre functor clearly exists, so the matrix is actually positive semidefinite. Computing the leading minors, we get that the Gram matrix is positive definite if and only if $N>2$.
\end{rem}

\begin{cor}\label{C.NCBQ}
Consider $N>2$ and a fibre functor $F\colon\Bipart_N\to\Mat$ such that $F(\crosspart)$ is the flip map. Then the quantum group corresponding to $F(\NCBipartEven_N)$ is not a group, but a proper quantum group.
\end{cor}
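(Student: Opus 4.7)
The plan is to reduce the claim to the preceding proposition via a standard dictionary between commutativity of $\Olg(G)$ and the presence of the flip map in the representation category. By Theorem~\ref{T.TK}, the concrete category $\RCat := F(\NCBipart_N) \subset \Mat_N$ corresponds to a unique orthogonal compact matrix quantum group $G$ with $\RCat_G = \RCat$, so it only remains to show that $\Olg(G)$ fails to be commutative.

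First I would record the general fact that, for any orthogonal compact matrix quantum group $G$ with fundamental representation $u$, the algebra $\Olg(G)$ is commutative if and only if the flip $\sigma\colon \C^N\otimes\C^N \to \C^N\otimes\C^N$ lies in $\RCat_G(2,2)$. This is just the observation that $\sigma \in \Mor(u\otimes u, u\otimes u)$ unpacks entrywise to $u^i_k u^j_l = u^j_l u^i_k$ for all $i,j,k,l$, which is equivalent to commutativity of the generators of $\Olg(G)$. Classicality of $G$ is in turn equivalent to commutativity of $\Olg(G)$, by Gelfand duality.

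Second, I would invoke the immediately preceding proposition, which asserts precisely that $F(\crosspart) \notin F(\NCBipart_N) = \RCat_G$. Since $F(\crosspart)$ is by hypothesis the flip map $\sigma$, this says $\sigma \notin \RCat_G(2,2)$. Combining with the first step, $\Olg(G)$ is non-commutative, and hence $G$ is a proper (non-classical) quantum group, as required.

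There is essentially no obstacle left: all the combinatorial and linear-algebraic content sits in the preceding proposition, where the Gram-matrix computation shows that the five morphisms $\Paabb$, $\Pabba$, $\Gconnecter$, $\Gwhite\Gconnecter$, $\crosspart$ are linearly independent for $N>2$, so $\crosspart$ cannot be expressed in terms of the spanning set of $F(\NCBipart_N(2,2))$ provided by the $(2,2)$-lemma. The only thing to be careful about in writing the corollary is the verification of the ``commutative iff flip is an intertwiner'' equivalence, which is well known but worth stating explicitly.
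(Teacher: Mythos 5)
Your proposal is correct and is precisely the argument the paper intends: the corollary carries no written proof because it is meant to follow from the preceding proposition via the standard equivalence ``$\Olg(G)$ commutative $\Leftrightarrow$ the flip lies in $\RCat_G(2,2)$'', which you state and verify. Nothing is missing.
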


\subsection{Case $N=4$}
In this section, we are going to show that the quantum group corresponding to the category $F_H(\NCBipartEven_4)$ for some Hadamard matrix $H$ is the anticommutative $SO_4^{-1}$. First, recall what are the generators of this representation category.

We denote by $\hat\Pabcd$ a certain linear combination of non-crossing partitions (diagrams involving black spiders only) such that $[F_4(\hat\Pabcd)]^{ij}_{kl}$ equals to one if and only if all the indices are mutually distinct and zero otherwise. See \cite[Sections 1.6, 2.1]{GD4}. In addition, we denote $\tilde\crosspart:=2\Gconnecter-\crosspart$, so that
$$[F_4(\tilde\crosspart)]^{ij}_{kl}=\begin{cases}1&\text{if $i=j=k=l$}\\-1&\text{if $i=l\neq j=k$}\\0&\text{otherwise}\end{cases}$$
With this notation, we can say the following:

\begin{prop}[{\cite[Example~2.5]{GD4}}]\label{P.SO4}
The representation category of $SO_4^{-1}$ is given by $F_4(\langle\hat\Pabcd,\tilde\crosspart,\pairpart\rangle)$.
\end{prop}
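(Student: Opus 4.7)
By Woronowicz--Tannaka--Krein (Theorem~\ref{T.TK}), the concrete category $F_4(\langle\hat\Pabcd,\tilde\crosspart,\pairpart\rangle)\subset\Mat_4$ is the representation category of a unique orthogonal compact matrix quantum group $G$; my plan is to identify $G$ with $SO_4^{-1}$ by translating each generating intertwiner into an explicit family of algebraic relations on the entries $u^i_j$ of the fundamental representation, and then matching the resulting presentation with a known presentation of $\Olg(SO_4^{-1})$. Since $F_4(\pairpart)$ is the standard symmetric pairing, we already get $G\subset O_4^+$, which encodes orthogonality $uu^{\mathrm T}=u^{\mathrm T}u=1$ and self-adjointness $u^i_j=(u^i_j)^*$.

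Expanding the intertwiner condition $F_4(\tilde\crosspart)\,u^{\otimes 2}=u^{\otimes 2}\,F_4(\tilde\crosspart)$ using the explicit $\{0,\pm1\}$ entries of $F_4(\tilde\crosspart)$ recorded just before the proposition, and splitting into cases according to whether $i=j$ and whether $k=l$, I expect to obtain
\[
u^i_k u^i_l+u^i_l u^i_k=0\ (k\neq l),\quad u^i_k u^j_k+u^j_k u^i_k=0\ (i\neq j),\quad u^i_l u^j_k=u^j_k u^i_l\ (i\neq j,\ k\neq l).
\]
These are precisely the anticommutation/commutation rules that present $O_4^{-1}$ on top of orthogonality, so the quantum group cut out by $\pairpart$ and $\tilde\crosspart$ alone should be $O_4^{-1}$.

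It then remains to see that adding the intertwiner $\hat\Pabcd$ cuts $O_4^{-1}$ down to $SO_4^{-1}$. Because $[F_4(\hat\Pabcd)]^{ij}_{kl}$ vanishes unless all four indices are distinct, the cases with $i=j$ or $k=l$ are automatically satisfied modulo the anticommutation above (the two surviving summands cancel by the first relation), while the main case $i\neq j,\ k\neq l$ should reduce, after complementary-pair bookkeeping in $\{1,2,3,4\}$, to the single family
\[
u^a_k u^b_l+u^b_k u^a_l=u^i_c u^j_d+u^i_d u^j_c,\quad \{a,b\}=\{1,2,3,4\}\setminus\{i,j\},\ \{c,d\}=\{1,2,3,4\}\setminus\{k,l\}.
\]
This is the $q=-1$ analogue of the classical identity witnessing $\det u=1$ in $SO_4$: the Levi-Civita signs are absorbed into the anticommutation of the $u^i_j$'s, which is exactly the reason the index-wise ``symmetric'' tensor $\hat\Pabcd$ can replace the classical antisymmetrizer at the deformation parameter $-1$.

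The main obstacle will be this final step, namely to verify that the relations extracted from $\hat\Pabcd$-intertwining are \emph{exactly} the quantum-determinant-one relations cutting $O_4^{-1}$ down to $SO_4^{-1}$, neither strictly stronger (which would give a proper subgroup) nor strictly weaker (which would give all of $O_4^{-1}$). Concretely, one has to check that the displayed quadratic relations, together with orthogonality and the $(-1)$-anticommutations, generate the same $*$-ideal as the standard $SO_4^{-1}$ presentation. Once this match is established, uniqueness in Tannaka--Krein yields $G=SO_4^{-1}$.
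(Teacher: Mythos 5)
First, note that the paper does not actually prove Proposition~\ref{P.SO4}: it is imported wholesale from \cite[Example~2.5]{GD4}, so there is no in-paper argument to compare yours with. Judged on its own terms, your strategy --- read off a presentation of $\Olg(G)$ from the generating intertwiners via Tannaka--Krein and match it against a presentation of $\Olg(SO_4^{-1})$ --- is legitimate, and the parts you actually carry out are correct. The case analysis of $F_4(\tilde\crosspart)u^{\otimes2}=u^{\otimes2}F_4(\tilde\crosspart)$ does yield exactly the same-row and same-column anticommutation together with the mixed commutation, so $\langle\tilde\crosspart,\pairpart\rangle$ cuts out $O_4^{-1}$; and your expansion of the $\hat\Pabcd$-relations, including the observation that the degenerate cases $i=j$ or $k=l$ reduce to the anticommutation relations, is also right.

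The problem is that the entire mathematical content of the proposition sits in the step you defer. You still owe two directions: (a) the complementary-minor relations $u^a_ku^b_l+u^b_ku^a_l=u^i_cu^j_d+u^i_du^j_c$ actually hold in $\Olg(SO_4^{-1})$, and (b) over $\Olg(O_4^{-1})$ they imply the twisted determinant relation $\sum_{\sigma\in S_4}u^1_{\sigma(1)}u^2_{\sigma(2)}u^3_{\sigma(3)}u^4_{\sigma(4)}=1$ and nothing stronger. Neither is automatic. Direction (b) is a concrete but genuinely nontrivial computation: group the permutations by the unordered pair $\{\sigma(1),\sigma(2)\}$, use the commutation relations and your relation with upper indices $(1,2)$ to rewrite each complementary factor $u^3_{\sigma(3)}u^4_{\sigma(4)}+u^3_{\sigma(4)}u^4_{\sigma(3)}$ as $u^1_{\sigma(1)}u^2_{\sigma(2)}+u^1_{\sigma(2)}u^2_{\sigma(1)}$, and then collapse the resulting sum of squares to $1$ using orthogonality of rows $1$ and $2$. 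Direction (a) is the $q=-1$ analogue of the adjugate identity $\operatorname{adj}(u)=\det(u)\,u^{\mathsf T}$ restricted to $2\times2$ minors (equivalently, that the twisted Hodge star on $\Lambda^2\C^4$ intertwines $u^{\otimes2}$ precisely when the twisted determinant is $1$), and it has to be derived from whatever presentation of $SO_4^{-1}$ you take as your definition. Until both are written out, you have only exhibited two quantum subgroups of $O_4^{-1}$ cut out by analogous-looking degree-$2$, respectively degree-$4$, relations; the identification of the two $*$-ideals is exactly the statement being cited from \cite[Example~2.5]{GD4}, so as it stands your argument assumes what it needs to prove.
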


Now, consider the matrices
$$H=
\begin{pmatrix}
1&-1&-1&-1\\
-1&1&-1&-1\\
-1&-1&1&-1\\
-1&-1&-1&1
\end{pmatrix},
\qquad
\F=
\begin{pmatrix}
1&1&1&1\\
1&-1&1&-1\\
1&1&-1&-1\\
1&-1&-1&1
\end{pmatrix}.
$$
Both are Hadamard of the size four. The first one can actually be expressed as $H=F_4(2\idpart-\Gdisconnecter)$ ($F_4$ being the standard interpretation of black spiders) while the second one is actually the Fourier transform on $\Z_2\times\Z_2$. Note in addition that both are actually self-adjoint, so $H=H^\dag=4H^{-1}$, $\F=\F^\dag=4\F^{-1}$.

\begin{lem}\label{L.SO4}
With the notation above, we have that
\begin{align*}
F_H(\Gconnecter)&=\frac{1}{4}\F^{\otimes 2} F_4(-\tilde\crosspart+\hat\Pabcd+\Paabb+\Pabba)\F^{-1\,\otimes 2}\\
F_H(\Gwhite\Gconnecter)&=\frac{1}{4}\F^{\otimes 2}F_4(-\tilde\crosspart-\hat\Pabcd+\Paabb+\Pabba)\F^{-1\,\otimes 2}
\end{align*}
\end{lem}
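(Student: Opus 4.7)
The plan is to interpret both sides of the claimed equations through explicit computation in the Fourier basis given by $\F$, which is the character table of $\Z_2 \times \Z_2$. Note first that $F_H(\Gconnecter) = T_{2,2}$ is the ``all-four-equal'' tensor with $[T_{2,2}]^{ij}_{kl} = \delta_{ijkl}$ in the standard basis of $\C^4$, while $F_H(\Gwhite\Gconnecter) = U^{\otimes 2} T_{2,2} U^{\otimes 2}$ is its image under the basis-change $U := H/2$, which is unitary and Hermitian (using $H^2 = 4I$). After absorbing the factors of $\F$ and $\F^{-1}$ on both sides, the two claims reduce to computing $4\,\F^{-1\otimes 2} T_{2,2} \F^{\otimes 2}$ and $4\,\F^{-1\otimes 2} U^{\otimes 2} T_{2,2} U^{\otimes 2} \F^{\otimes 2}$ and matching them against the two partition sums.

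Identify $\{1,2,3,4\}$ with $\Z_2 \times \Z_2$ so that $\F$ is literally the character table, with the index $1$ corresponding to the group identity. From $H = 2I - J$ and $\F\mathbf{1} = 4 e_1$ one readily computes $\F^{-1} H \F$ to be the diagonal matrix with entries $(-2,2,2,2)$, so $\F^{-1} U \F$ is diagonal with entries $\sigma = (-1,1,1,1)$. A direct entry-wise computation using the character identity $\sum_a \chi_s(a) = 4\,\delta_{s,0}$ gives
$$[\F^{-1\otimes 2} T_{2,2} \F^{\otimes 2}]^{ij}_{kl} = \tfrac{1}{4}\,\delta_{i+j,\,k+l},$$
and since diagonal conjugation multiplies entries elementwise by $\sigma_i\sigma_j\sigma_k\sigma_l$, the analogous formula for the white case is $[\F^{-1\otimes 2} U^{\otimes 2} T_{2,2} U^{\otimes 2} \F^{\otimes 2}]^{ij}_{kl} = \tfrac{1}{4}\,\sigma_i\sigma_j\sigma_k\sigma_l\,\delta_{i+j,\,k+l}$.

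It remains to verify that these tensors match the partition sums on the right-hand sides. The relation $i+j = k+l$ in $\Z_2\times\Z_2$ holds precisely in five patterns: all four equal; $i=j$ and $k=l$ with $i\neq k$; the identity pattern $i=k$, $j=l$ with $i\neq j$; the crossing pattern $i=l$, $j=k$ with $i\neq j$; and all four distinct. Using that $\tilde\crosspart$ equals $+1$ on the ``all equal'' locus and $-1$ on the ``crossing'' locus, that $\Paabb$ corresponds to $\delta_{ij}\delta_{kl}$, that $\Pabba$ is the identity $\delta_{ik}\delta_{jl}$, and that $\hat\Pabcd$ is the indicator of ``all distinct'', a direct case analysis shows that $-\tilde\crosspart + \hat\Pabcd + \Paabb + \Pabba$ evaluates to $1$ in each of these five patterns and $0$ elsewhere, matching the first claim. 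For the second claim the only change is the sign of $\hat\Pabcd$; this exactly accounts for the factor $\sigma_i\sigma_j\sigma_k\sigma_l = -1$ in the ``all distinct'' pattern (where the four indices exhaust $\Z_2\times\Z_2$ and so exactly one equals the group identity), while in the other four patterns each index value occurs an even number of times and $\sigma_i\sigma_j\sigma_k\sigma_l = +1$ automatically. The main obstacle is purely bookkeeping: fixing the correspondence between indices of $\C^4$ and elements of $\Z_2\times\Z_2$ so that $H$ is diagonalized by $\F$ exactly in the form above.
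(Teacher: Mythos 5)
Your proof is correct and rests on the same basic strategy as the paper's: conjugate everything by the Fourier transform $\F$ of $\Z_2\times\Z_2$ and verify the identities entrywise, using the character identity $\sum_a\chi_s(a)=4\delta_{s,0}$ to turn the black spider into the tensor $\tfrac14\delta_{i+j,k+l}$. Where you genuinely diverge is in the treatment of the white spider: the paper writes $\Gwhite\Gconnecter$ as the black spider with $\idpart-\tfrac12\Gdisconnecter$ attached to each leg, expands the result as a linear combination of partitions, and then checks the sum and the difference of the two claimed identities; you instead observe that $\F$ diagonalizes $H/2$ with spectrum $(-1,1,1,1)$, so that in the Fourier basis the white spider is the black one with entries multiplied by $\sigma_i\sigma_j\sigma_k\sigma_l$, which equals $-1$ exactly on the all-distinct locus (one index is the identity) and $+1$ on the other four patterns (each value occurs an even number of times). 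This diagonalization shortcut avoids the partition expansion entirely and makes it transparent why the only difference between the two identities is the sign of $\hat\Pabcd$. Your enumeration of the five patterns realizing $i+j=k+l$ is complete, and your reading of $\tilde\crosspart$, $\hat\Pabcd$, $\Paabb$, $\Pabba$ matches the paper's conventions, so the final case analysis goes through.
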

\begin{proof}
These equalities can be checked in a straightforward way as they only involve adding and multiplying some $16\times 16$ matrices. But, let us sketch also a more abstract argument.

First, recall that $F_H(\Gconnecter)=F_4(\Gconnecter)$. Denote $\Graph{\draw (1,0) -- (1,0.5) \Gmapq{} -- (1,1);}:=\idpart-\frac{1}{2}\Gdisconnecter$, which corresponds to the normalized Hadamard matrix $\frac{1}{2}H=F_4(\Graph{\draw (1,0) -- (1,0.5) \Gmapq{} -- (1,1);})$. Thus, $F_H({\Gwhite\Gconnecter})=F_4(
\Graph{
\draw (1,-.3) -- (1,0) \Gmapq{} -- (1,0.3) -- (1.5,0.5) -- (2,0.3) -- (2,0) \Gmapq{} -- (2,-.3);
\draw (1,1.3) -- (1,1) \Gmapq{} -- (1,0.7) -- (1.5,0.5) node {} -- (2,0.7) -- (2,1) \Gmapq{} -- (2,1.3);
})$ (cf. \cite[Section~3.2]{GD4}). Finally, note that
$$[\F^{-1\,\otimes l}F_4(\spider)\F^{\otimes k}]^{j_1,\dots,j_l}_{i_1,\dots,i_k}=4^{1-l}\delta_{i_1\oplus\cdots\oplus i_k,j_1\oplus\cdots\oplus j_l},$$
where $\oplus$ denotes the group operation on $\Z_2\times\Z_2$ (this actually works for any abelian group and its Fourier transform, see \cite[eq.~(3.2)]{GroAbSym}).

Now this allows us to check this relatively easily by hand: Adding and subtracting the equalities from the statement, we equivalently need to show that
\begin{align*}
\F^{-1\,\otimes 2}F_H(\Gconnecter-\Gwhite\Gconnecter)\F^{\otimes 2}&=\frac{1}{2}F_4(\hat\Pabcd),\\
\F^{-1\,\otimes 2}F_H(\Gconnecter+\Gwhite\Gconnecter)\F^{\otimes 2}&=\frac{1}{2}F_4(-\tilde\crosspart+\Paabb+\Pabba).
\end{align*}

So, for the first equation, we have
\begin{align*}
&[\F^{-1\,\otimes 2}F_H(\Gconnecter-{\Gwhite\Gconnecter})\F^{\otimes 2}]^{ij}_{kl}=[\F^{-1\,\otimes 2}F_4(\Gconnecter-
\Graph{
\draw (1,-.3) -- (1,0) \Gmapq{} -- (1,0.3) -- (1.5,0.5) -- (2,0.3) -- (2,0) \Gmapq{} -- (2,-.3);
\draw (1,1.3) -- (1,1) \Gmapq{} -- (1,0.7) -- (1.5,0.5) node {} -- (2,0.7) -- (2,1) \Gmapq{} -- (2,1.3);
})\F^{\otimes 2}]^{ij}_{kl}\\
&=[\F^{-1\,\otimes 2}F_4\left(\frac{1}{2}(\Pabbb+\Pabaa+\Paaba+\Paaab)-\frac{1}{4}(\Paabc+\Pabac+\dots)+\frac{1}{4}\Pabcd\right)\F^{\otimes 2}]^{ij}_{kl}\\
&=\frac{1}{2}(\delta_{j,e}\delta_{i,k\oplus l}+\delta_{i,e}\delta_{j,k\oplus l}+\dots)-(\delta_{i,j}\delta_{k,e}\delta_{l,e}+\dots)+4\delta_{i,e}\delta_{j,e}\delta_{k,e}\delta_{l,e},
\end{align*}
where $e$ denotes the (index corresponding to) the group identity in $\Z_2\times\Z_2$. It is quite easy to check that this equals to $1/2$ if all the indices are mutually distinct and zero otherwise. The second equality can be checked in a similar manner.
\end{proof}

\begin{thm}\label{T.SO4}
Consider arbitrary Hadamard matrix $H$ of size 4. Then the quantum group corresponding to the category $F_H(\NCBipartEven_4)$ is isomorphic to $SO_4^{-1}$.
\end{thm}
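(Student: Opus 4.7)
The plan is to first verify the claim for the specific Hadamard matrix $H$ appearing in Lemma~\ref{L.SO4}, where an explicit identification is available, and then extend the conclusion to arbitrary $4\times 4$ Hadamard matrices using the classical fact that all such matrices lie in a single Hadamard-equivalence class. By Theorem~\ref{T.TK}, identifying the quantum group amounts to identifying $F_H(\NCBipartEven_4)\subset\Mat_4$ as a concrete category up to an orthogonal change of basis; concretely, I will show that after conjugation by the orthogonal matrix $A:=\F/2$ (note that $\F$ is symmetric with $\F^2=4\,\id$, hence $A$ is self-inverse and orthogonal), the category $F_H(\NCBipartEven_4)$ coincides with the representation category $F_4(\langle\hat\Pabcd,\tilde\crosspart,\pairpart\rangle)$ of $SO_4^{-1}$ from Proposition~\ref{P.SO4}.

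Lemma~\ref{L.SO4} directly provides one inclusion: a short computation (using $\F^2=4\,\id$) gives $A^{\otimes 2}F_H(\Gconnecter)A^{-\otimes 2}=\tfrac14 F_4(-\tilde\crosspart+\hat\Pabcd+\Paabb+\Pabba)$ and the analogous formula for $F_H(\Gwhite\Gconnecter)$, both of which lie in the representation category of $SO_4^{-1}$; meanwhile $F_H(\pairpart)=F_4(\pairpart)$ is fixed by the orthogonal conjugation. For the reverse inclusion, I will take the sum and difference of the two equations of Lemma~\ref{L.SO4}: the difference isolates $F_4(\hat\Pabcd)$ (up to a scalar) as the $A$-conjugate of $F_H(\Gconnecter)-F_H(\Gwhite\Gconnecter)$, while the sum yields $F_4(-\tilde\crosspart+\Paabb+\Pabba)$. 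Since $\Paabb,\Pabba\in\NCBipartEven_4$ are pair partitions without internal spiders, their $F_4$-images are invariant under $A^{\otimes 2}$-conjugation (the morphism $F_4(\Paabb)$ is the rank-one projector $|R\rangle\langle R|$ which is fixed because $A^{\otimes 2}R=R$; $F_4(\Pabba)$ is the identity on $(\C^4)^{\otimes 2}$). Subtracting these from the sum isolates $F_4(\tilde\crosspart)$ inside the $A$-conjugate of $F_H(\NCBipartEven_4)$. Together with $F_4(\pairpart)$ we thus recover all three generators of the representation category of $SO_4^{-1}$, completing the identification for this specific $H$.

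For an arbitrary Hadamard matrix $H'$ of order $4$, I will invoke the classical uniqueness theorem to write $H'=BHC$ for signed permutation matrices $B,C\in M_4(\{0,\pm 1\})$. A direct check then shows $F_{H'}(x)=B^{\otimes l}F_H(x)(B^{-1})^{\otimes k}$ on every generator of $\NCBipartEven_4$: the factor $C$ merely permutes and signs the white basis $(f_j)$, leaving the symmetric $(2,2)$-spider $F_H(\Gwhite\Gconnecter)=\sum_j f_jf_j^*\otimes f_jf_j^*$ unchanged (the signs cancel in the squared tensors and the sum is symmetric in the basis), while $B$ being a signed permutation preserves both the standard-basis black spider and the duality morphism. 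Hence $F_{H'}(\NCBipartEven_4)$ is the orthogonal $B$-conjugate of $F_H(\NCBipartEven_4)$, and Tannaka--Krein again yields an isomorphism of the corresponding quantum group with $SO_4^{-1}$. The principal obstacle I anticipate is the careful scalar bookkeeping in the $A$-conjugation step (to ensure that the isolated morphisms genuinely generate the full representation category of $SO_4^{-1}$ rather than a proper subcategory), together with verifying rigorously that the signed-permutation intertwining for arbitrary $H'$ delivers an isomorphism of concrete categories and not merely a monoidal equivalence -- the latter would only furnish a Hopf bi-Galois extension rather than an actual isomorphism of quantum groups.
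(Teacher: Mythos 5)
Your proposal is correct and follows essentially the same route as the paper: establish the identification for the specific $H$ of Lemma~\ref{L.SO4} (your sum/difference argument to recover $F_4(\hat\Pabcd)$ and $F_4(\tilde\crosspart)$ is exactly the intended content of that lemma, spelled out), and then transport the result to an arbitrary $4\times4$ Hadamard matrix via the classical fact that they are all equivalent, so the concrete categories differ only by conjugation with a signed permutation matrix. The paper's own proof is just a terser version of the same two steps.
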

\begin{proof}
For the particular choice of $H$, we made above, it follows from Lemma~\ref{L.SO4} that $F_H(\NCBipartEven_4)=F_H(\langle\Gconnecter,{\Gwhite\Gconnecter},\pairpart\rangle)=F_4(\hat\Pabcd,\tilde\crosspart,\pairpart)$, which is by Proposition~\ref{P.SO4} exactly the representation category of $SO_4^{-1}$.

It is well known that all Hadamard matrices of size four are mutually equivalent (see Section~\ref{secc.hadamard} for a definition). This means that the images of the corresponding functors $F_H$ differ only by conjugation with a certain matrix. Hence, the corresponding quantum groups must be isomorphic.
\end{proof}



\begin{rem}
It was recently shown in \cite{CE23} that the representation category of the classical $SO_4$ can be modelled by $\NCPair_2\times\NCPair_2$, which corresponds to the well known fact that the representation categories of $q$-deformed quantum groups (e.g.\ $SO_4$ and $SO_4^{-1}$) are equivalent, but not necessarily monoidally equivalent.
\end{rem}

\begin{rem}
In \cite{GD4}, we interpreted the corresponding quantum group as the free quantum version of Coxeter group $D_4$ as we did not know that it is actually isomorphic to $SO_4^{-1}$. Although there is no mistake in the article \cite{GD4}, we can see that this interpretation is somewhat misleading: $SO_4^{-1}$ is definitely not free -- it obeys some (anti)commutation relations. The problem lies in the fact that although we proved that the flip map is not contained in the category and, moreover, the category can be modelled by non-crossing diagrams, this does not necessarily imply that there are no commutation relations. In our case, the deformed commutation relations correspond to $\tilde\crosspart$, i.e.
$$-2\Gconnecter-2\Gwhite\Gconnecter+\Paabb+\Pabba.$$
\end{rem}

\section{Hadamard matrices and their symmetries}
\label{sec.main}

\subsection{Hyperoctahedral (quantum) group}

In the following text, we will denote by $H_N$ the hyperoctahedral group -- the symmetry group of the $N$-dimensional hypercube. Structurally, it can be written as a wreath product $H_N=\Z_2\wr S_N$. Therefore, it can be realized as a matrix group by $N\times N$ signed permutation matrices. That is, matrices with entries $\pm 1$ or zero such that in each row and column there is only one nonzero entry. As can be easily seen, this is equivalent to writing
$$H_N=\{P\in O_N\mid P^i_jP^i_k=0=P_i^jP_i^k\text{ for $j\neq k$}\}.$$

Let's say that a matrix $P$ (with possibly non-commutative entries) is \emph{cubic} if it is orthogonal and satisfies the relation $P^i_jP^i_k=0=P_i^jP_i^k$. It is then natural to define the \emph{free quantum hyperoctahedral group} $H_N^+$ by
$$\Olg(H_N^+)=\staralg(u^i_j\mid\text{$u$ is cubic}).$$

The relation $P^i_jP^i_k=0=P_i^jP_i^k$ can be equivalently written as $(P\otimes P)T=T(P\otimes P)$ with $T=F_N(\Gconnecter)$. Thus, $H_N$ is the quantum group with representation category given by $F_N(\langle\Gconnecter,\crosspart,\pairpart\rangle)$ while $H_N^+$ corresponds to the category $F_N(\langle\Gconnecter,\pairpart\rangle)$.

This quantum group was first introduced by Bichon in \cite{Bic04}, where he also defined the free quantum counterpart of the wreath product. By the way, the free hyperoctahedral quantum group is actually \emph{not} the quantum automorphism group of the $N$-dimensional hypercube graph, but it \emph{is} the quantum automorphism group of a graph given by $N$ segments \cite{BBC07}.

\subsection{Hadamard matrices}
\label{secc.hadamard}

Two Hadamard matrices $H$ and $H'$ are called \emph{equivalent} if one can be transformed to the other using the following operations: permuting rows, permuting columns, multiplying a row by $-1$, multiplying a column by $-1$. It is easy to see that this transformation can be encoded using two signed permutation matrices $P,Q\in H_N$ such that $H'=PHQ^{-1}$.

This motivates the definition of an \emph{automorphism group} $\Aut H$ of a given Hadamard matrix $H$. Note that if $H=PHQ^{-1}$, then the matrix $P$ is already determined by $Q$ as $P=HQH^{-1}$. This allows the following convenient description of $\Aut H$ as a matrix group:
$$\Aut H=\{Q\in H_N\mid HQH^{-1}\in H_N\}=H_N\cap H^{-1}H_NH.$$

Now it is already straightforward to quantize this definition.

\begin{defn}
Let $H$ be an $N\times N$ Hadamard matrix. We define the \emph{quantum automorphism group} of $H$ to be $\Aut^+H=H_N^+\cap H^{-1}H_N^+H$.
\end{defn}

That is, it is a compact matrix quantum group, where the associated Hopf $*$-algebra can be defined using the following relations
$$\Olg(\Aut^+ H)=\staralg(u_{ij}\mid\text{$u$ and $HuH^{-1}$ are cubic}).$$

Alternatively, one can describe the quantum group in terms of the associated category
$$\RCat_{\Aut^+H}=F_H(\langle\Gconnecter,
\Graph{
\draw (1,-0.2) -- (1,0) \Gmapsc{} -- (1,0.3) -- (1.5,0.5) -- (2,0.3) -- (2,0) \Gmapsc{} -- (2,-0.2);
\draw (1,1.2) -- (1,1) \Gmapscr{} -- (1,0.7) -- (1.5,0.5) node {} -- (2,0.7) -- (2,1) \Gmapscr{} -- (2,1.2);
},
\pairpart\rangle)
=F_H(\langle\Gconnecter,\Gwhite\Gconnecter,\pairpart\rangle).$$

\begin{defn}
Two Hadamard matrices $H$ and $H'$ of size $N$ and $N'$ respectively are \emph{quantum isomorphic} if there exists a $*$-algebra $\Alg$ generated by elements of a cubic matrix $q\in\Mat(N,N')\otimes \Alg$ such that $p=H'qH^{-1}$ is also a cubic matrix.
\end{defn}

\begin{thm}\label{T.hadamard}
Two Hadamard matrices are quantum isomorphic if and only if they have the same size.
\end{thm}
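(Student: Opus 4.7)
The plan splits into the two directions. The ``only if'' direction is immediate from the definition: a cubic matrix $q\in\Mat(N,N')\otimes\Alg$ is in particular orthogonal, hence $qq^{\rm T}=1_{N}$ and $q^{\rm T}q=1_{N'}$ in $\Alg$, which forces $N=N'$ whenever $\Alg\neq 0$.

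For the nontrivial ``if'' direction the plan is to use the Hopf--Galois characterisation of quantum isomorphism (Theorem~\ref{T.Galois}) together with the categorical description
$$\RCat_{\Aut^+H}=F_H(\langle\Gconnecter,\Gwhite\Gconnecter,\pairpart\rangle)$$
recorded in Section~\ref{secc.hadamard}. The key structural input is that the category $\NCBipartEven_N=\langle\Gconnecter,\Gwhite\Gconnecter,\pairpart\rangle$ is pure, so by Proposition~\ref{P.pure} every fibre functor on it has kernel exactly equal to the tensor ideal $\Neg$ of negligible morphisms. In particular, for every Hadamard matrix $H$ of size $N$, the functor $F_H$ descends to a monoidal $*$-isomorphism $\bar F_H\colon \NCBipartEven_N/\Neg \xrightarrow{\sim}\RCat_{\Aut^+H}$.

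Given two Hadamard matrices $H,H'$ of the same size $N$, I would then form the composition $\bar F_{H'}\circ\bar F_H^{-1}$, which is a monoidal $*$-isomorphism $\RCat_{\Aut^+H}\to\RCat_{\Aut^+H'}$ mapping
$$F_H(\Gconnecter)\mapsto F_{H'}(\Gconnecter),\qquad F_H(\Gwhite\Gconnecter)\mapsto F_{H'}(\Gwhite\Gconnecter),\qquad F_H(\pairpart)\mapsto F_{H'}(\pairpart).$$
Applying Theorem~\ref{T.Galois} to this generating data produces a $*$-algebra $\Alg$ together with a unitary matrix $q\in M_N(\Alg)$ satisfying the intertwining relations for the three generators above.

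The final step is to translate these three intertwining relations back into the defining conditions of quantum isomorphism. Intertwining $q^{\otimes 2}$ with $F_H(\Gconnecter)=F_{H'}(\Gconnecter)=F_N(\Gconnecter)$ says precisely that $q$ is a cubic matrix, while intertwining with the image of $\pairpart$ is automatic for a unitary. For the white spider, Proposition~\ref{P.FunctorHad} expresses $F_H(\Gwhite\Gconnecter)$ as $F_N(\Gconnecter)$ sandwiched with normalised Hadamard morphisms built out of $H$; unpacking the corresponding intertwiner equation (and similarly for $H'$) then yields that the conjugated matrix $H^{-1}qH'$ is cubic as well, which is exactly the definition. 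The main bookkeeping obstacle is precisely this last step: one has to check that Proposition~\ref{P.FunctorHad}'s color-change formula really converts the white-spider intertwiner into the cubic condition on $H^{-1}qH'$ rather than on some variant such as $HqH'^{-1}$, but the self-adjointness $H=H^\dag$ and the relation $H^{-1}=H^{\rm T}/N$ for Hadamard matrices make the two formulations equivalent up to a harmless rescaling.
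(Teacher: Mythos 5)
Your argument is essentially the paper's: purity of $\NCBipartEven_N$ plus Proposition~\ref{P.pure} shows that all fibre functors have kernel exactly $\Neg$, so for equal sizes the categories $\RCat_{\Aut^+H}$ and $\RCat_{\Aut^+H'}$ are isomorphic as quotients of the same diagrammatic category by the same ideal, and Theorem~\ref{T.Galois} converts this generator-preserving monoidal isomorphism into a quantum isomorphism. Two small imprecisions, neither fatal: in the ``only if'' direction, orthogonality of a rectangular unitary over a noncommutative $*$-algebra does not by itself force $N=N'$ --- you need to sum the cubic relations $\sum_j(q^i_j)^2=1=\sum_i(q^i_j)^2$ over all rows and columns (or, as the paper does, note that the size is the categorical scalar $\pairpart\cdot\uppairpart$, which any monoidal isomorphism preserves); and in your last paragraph the claim $H=H^\dag$ is false for a general Hadamard matrix --- what holds is $H=\bar H$, i.e.\ $H^\dag=H^{\rm T}=NH^{-1}$, which is all that unpacking the white-spider intertwiner actually requires.
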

\begin{proof}
As follows from Theorem~\ref{T.Galois}, two Hadamard matrices are quantum isomorphic if and only if there exist a monoidal isomorphism $\RCat_{\Aut^+X}\to\RCat_{\Aut^+X'}$ mapping generators to generators.

If two Hadamard matrices are supposed to be quantum isomorphic, they have to have the same size as the size can be expressed in terms of the associated category as $\pairpart\cdot\uppairpart$.

On the other hand, suppose that two Hadamard matrices have the same size $N$. As we indicated above, the representation category $\RCat_G$ corresponding to $G=\Aut^+H$ is the image of $\NCBipartEven_N=\langle\Gconnecter,\Gwhite\Gconnecter,\pairpart\rangle$ under an appropriate fibre functor. Since the diagrammatic category $\NCBipartEven_N$ is pure, all the fibre functors have the same kernel according to Proposition~\ref{P.pure}. Hence, the images must be isomorphic.
\end{proof}

%
%
%

\subsection{Hadamard graphs}
\label{secc.graphs}

Let $H$ be an $N\times N$ Hadamard matrix. We define a graph $\HGr$ called a \emph{Hadamard graph} corresponding to each such matrix as follows \cite[Section~1.8]{BCN89}.

The set of vertices is given by $V=\{r_i^+,r_i^-,c_i^+,c_i^-\}_{i=1}^N$, i.e.\ we have $4N$ vertices. Here, the letter $r$ stands for \emph{row} and $c$ stands for \emph{column}. For every $i,j=1,\dots,N$, we have a pair of edges between $r_i^\pm$ and $c_j^\pm$ if $H_{ij}=+1$ and a pair of edges between $r_i^\pm$ and $c_j^\mp$ if $H_{ij}=-1$. There are no other edges. (Hence, the graph is bipartite with row vertices and column vertices forming the two parts.)

We can write down the adjacency matrix of $\HGr$ in a block-wise form as follows, where the blocks stand for $r^+, r^-,c^+,c^-$:
$$A=\begin{pmatrix}
0&0&H^+&H^-\\
0&0&H^-&H^+\\
H^{+\rm T}&H^{-\rm T}&0&0\\
H^{-\rm T}&H^{+\rm T}&0&0
\end{pmatrix}.
$$

Here, $H^+=\frac{1}{2}(J+H)$, where $J$ is the all-one-matrix. That is, the entries of $H^+$ are \emph{zeros} and \emph{ones} with \emph{one} exactly in the places, where $+1$ is in $H$. Similarly, $H^-=\frac{1}{2}(J-H)$ having \emph{one} exactly in those places, where $H$ has $-1$.

The idea is that the graph not only encodes the structure of the Hadamard matrix, but equivalence operations on Hadamard matrices correspond exactly to permutations of vertices preserving the two parts: multiplying $i$-th row (resp.\ column) by $-1$ corresponds to swapping $r_i^+$ with $r_i^-$ (resp.\ $c_i^+$ with $c_i^-$), swapping $i$-th row (resp.\ column) with $j$-th row (resp.\ column) corresponds to swapping $(r_i^+,r_i^-)$ with $(r_j^+,r_j^-)$ (resp.\ $(c_i^+,c_i^-)$ with $(c_j^+,c_j^-)$).

This means that if we really want to reconstruct a Hadamard matrix from a Hadamard graph, we need, in addition, to keep track of which vertices correspond to rows and which correspond to column. We may either colour them or put a loop to all row vertices (and have no loop at the column vertices). We will denote such a graph by $\HGr_0$ and call it the \emph{looped Hadamard graph}. Its adjacency matrix will be denoted by $A_0$.

\begin{prop}[{\cite{McK79}}]
There is a one-to-one correspondence between equivalence classes of Hadamard matrices and isomorphism classes of looped Hadamard graphs.
\end{prop}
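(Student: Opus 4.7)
The plan is to prove both directions of the correspondence explicitly. For the forward direction, I will check that each of the four elementary equivalence operations on Hadamard matrices lifts to a natural vertex permutation of $\HGr_0$: swapping rows $i$ and $j$ corresponds to the simultaneous swaps $r_i^\pm \leftrightarrow r_j^\pm$; negating row $i$ corresponds to the single swap $r_i^+ \leftrightarrow r_i^-$; and analogously for columns. A direct inspection of the block adjacency matrix $A$ confirms that these vertex permutations preserve both the edges and (trivially) the loops on the row block. Since any $P, Q \in H_N$ decomposes as a product of such elementary operations, the equivalence $H' = PHQ^{-1}$ yields an isomorphism $\HGr_0 \cong \HGr_0'$.

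For the reverse direction, given a graph isomorphism $\phi \colon \HGr_0 \to \HGr_0'$, the loops force $\phi$ to send row vertices to row vertices and column vertices to column vertices. The crucial step is to recover the twin pairs $\{r_i^+, r_i^-\}$ and $\{c_j^+, c_j^-\}$ from the bare graph. I will show that two distinct row vertices $v, w$ form a twin pair if and only if $N(v) \cap N(w) = \emptyset$. For twins this is immediate from the definition of $A$. For non-twins $r_i^\epsilon, r_k^\delta$ with $i \neq k$, the intersection $N(r_i^\epsilon) \cap N(r_k^\delta)$ consists of column vertices $c_j^\gamma$ with $\gamma = \epsilon H_{ij} = \delta H_{kj}$, so its size equals the number of columns $j$ with $\epsilon H_{ij} = \delta H_{kj}$; by orthogonality of distinct rows of $H$ this count is exactly $N/2$, which is nonzero for $N \ge 2$ (the case $N = 1$ being vacuous). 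The analogous characterization for column-vertex twins follows by the same argument applied to $H^{\rm T}$.

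Having identified the twin structure, $\phi$ induces a permutation $\sigma \in S_N$ of row indices together with signs $\epsilon_i \in \{\pm 1\}$ recording whether $\phi(r_i^+)$ equals $r_{\sigma(i)}^+$ or $r_{\sigma(i)}^-$, and analogously a permutation $\tau$ and signs $\delta_j$ for columns. Packaging $(\sigma, \epsilon)$ and $(\tau, \delta)$ into signed permutation matrices $P, Q \in H_N$, the condition that $\phi$ preserves edges reads off entrywise as $H' = PHQ^{-1}$, so $H$ and $H'$ are equivalent. Since the two constructions are mutually inverse on the level of classes, they establish the claimed bijection.

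The main obstacle is the twin-recovery step: without the Hadamard orthogonality condition, a non-twin row pair could accidentally have disjoint neighborhoods, and the induced map on twin pairs would not be well-defined. Once this graph-theoretic characterization of twins is in place, the remaining translation between twin-preserving vertex permutations and signed permutation matrices in $H_N$ is routine bookkeeping.
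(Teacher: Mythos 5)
Your proof is correct, and it is more complete than what the paper actually records: the paper only describes the forward direction (how the four elementary equivalence operations translate into loop-preserving vertex permutations of $\HGr_0$) and then defers to the citation with ``by construction.'' Your forward direction coincides with that sketch. The genuine added content is the reverse direction, where you must show that an \emph{arbitrary} isomorphism $\phi\colon\HGr_0\to\HGr_0'$ is of this special form; the key step, correctly identified, is recovering the twin pairs $\{r_i^+,r_i^-\}$ intrinsically from the graph. Your characterization is right: twins have disjoint neighbourhoods by construction, while for $i\neq k$ the common neighbours of $r_i^\epsilon$ and $r_k^\delta$ are the $c_j^\gamma$ with $\gamma=\epsilon H_{ij}=\delta H_{kj}$, and orthogonality of distinct rows gives exactly $N/2>0$ such columns for every sign choice. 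This is precisely where the Hadamard condition enters and where the argument would break for a general $\pm1$ matrix, as you note. The remaining bookkeeping ($\phi(r_i^+)=r_{\sigma(i)}^{\epsilon_i}$ etc.\ assembling into $P,Q\in H_N$ with $H'_{\sigma(i)\tau(j)}=\epsilon_i\delta_jH_{ij}$, i.e.\ $H'=PHQ^{-1}$) is routine and you have it right. In short: same approach where the paper has one, plus a correct proof of the half the paper omits.
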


Now, we would like to formulate a similar result for quantum automorphisms and isomorphisms.

\begin{lem}\label{L.HadGr}
Let $H$ be a Hadamard matrix and $\HGr$, $\HGr_0$ the corresponding Hadamard graphs. Then $\Aut^+ H$ acts on both $\HGr$ and $\HGr_0$. More precisely, consider the following matrix
$$u=\begin{pmatrix}
p^+&p^-&0&0\\
p^-&p^+&0&0\\
0&0&q^+&q^-\\
0&0&q^-&q^+
\end{pmatrix},\qquad
\begin{matrix}
[p^\pm]^i_j=\frac{1}{2}((p^i_j)^2\pm p^i_j),\\
[q^\pm]^i_j=\frac{1}{2}((q^i_j)^2\pm q^i_j),
\end{matrix}
$$
where $q$ is the fundamental representation of $\Aut^+H$, $p=HqH^{-1}$. Then $u$ is a magic unitary and satisfies $uA=Au$, $uA_0=A_0u$.
\end{lem}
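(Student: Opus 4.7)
The plan has three parts: verify the magic unitarity of $u$, verify $uA=Au$, and deduce $uA_0=A_0u$. The key input is that $q$ and $p=HqH^{-1}$ are both cubic (by definition of $\Aut^+H = H_N^+\cap H^{-1}H_N^+H$), i.e.\ they have self-adjoint entries satisfying the cubic relations $q^i_jq^i_k=0=q^j_iq^k_i$ for $j\neq k$, orthogonality $\sum_k q^i_kq^j_k=\delta_{ij}$, and the analogous identities for $p$. From orthogonality combined with the cubic relation I would first derive the cube identity $(q^i_j)^3=q^i_j\cdot\sum_k(q^i_k)^2=q^i_j$ (only the term $k=j$ survives). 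Setting $x=q^i_j$, a short calculation then gives $([q^\pm]^i_j)^2=\tfrac14(x^4\pm 2x^3+x^2)=\tfrac12(x^2\pm x)=[q^\pm]^i_j$, so the $[q^\pm]^i_j$ are projections, with $[q^+]^i_j[q^-]^i_j=\tfrac14(x^4-x^2)=0$ and $[q^+]^i_j+[q^-]^i_j=(q^i_j)^2$. The same applies to $p^\pm$.

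To finish checking that $u$ is a magic unitary, it remains to show that the entries within each row (resp.\ column) are mutually orthogonal and sum to $1$. Mutual orthogonality is clear: the cubic relation $p^i_jp^i_k=0$ for $j\neq k$ forces any monomial built from positive powers of $p^i_j$ and $p^i_k$ to vanish, so $[p^{\epsilon}]^i_j[p^{\epsilon'}]^i_k=0$ for all signs $\epsilon,\epsilon'$; the second cubic relation on $p^T$ handles columns, and the same arguments work for $q$. For the row sum along row $r_i^+$ I get $\sum_j([p^+]^i_j+[p^-]^i_j)=\sum_j(p^i_j)^2=1$ by orthogonality of $p$, and the remaining row and column sums work identically.

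For the intertwining $uA=Au$ I would organize the computation in block form: $u=\mathrm{diag}(P,Q)$ with $P=\bigl(\begin{smallmatrix}p^+&p^-\\p^-&p^+\end{smallmatrix}\bigr)$, $Q=\bigl(\begin{smallmatrix}q^+&q^-\\q^-&q^+\end{smallmatrix}\bigr)$, and $A=\bigl(\begin{smallmatrix}0&M\\M^T&0\end{smallmatrix}\bigr)$ with $M=\bigl(\begin{smallmatrix}H^+&H^-\\H^-&H^+\end{smallmatrix}\bigr)$. The equation then splits into $PM=MQ$ and $QM^T=M^TP$, each of which has only two distinct block entries by the $\bigl(\begin{smallmatrix}a&b\\b&a\end{smallmatrix}\bigr)$-symmetry. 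Summing the two block equations in either case yields $p^2J=Jq^2$ (with $p^2$ denoting the matrix of squared entries and $J$ the all-one matrix), and both sides equal $J$ by the row/column sum identities from the previous paragraph. Subtracting yields $pH=Hq$ (from $PM=MQ$) and $qH^T=H^Tp$ (from $QM^T=M^TP$); the first is just the definition $p=HqH^{-1}$, and the second follows by combining it with $H^T=NH^{-1}$. Finally, $uA_0=A_0u$ is automatic: $A_0-A$ is the block-diagonal projection onto the row-vertices, and $u$ is block-diagonal in the same row/column partition, so $u$ commutes with $A_0-A$ for free. The main effort lies in the block arithmetic of the third step, but since everything reduces cleanly to the definition of $p$ and the row/column sums of squared entries, I do not anticipate a genuine obstacle.
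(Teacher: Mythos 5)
Your proposal is correct and follows essentially the same route as the paper: the intertwining relations are verified exactly via the sum/difference decomposition $H^\pm=\tfrac12(J\pm H)$, $q^++q^-=(q^i_j)^2$-matrix, $q^+-q^-=q$, reducing everything to $Jq^{\bullet 2}=J=p^{\bullet 2}J$ and $pH=Hq$. The only differences are cosmetic: you write out the magic-unitary verification that the paper delegates to the proof of \cite[Theorem~6.2]{BBC07}, and you handle $A_0$ by noting $u$ commutes with $A_0-A$ rather than redoing the block computation.
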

\begin{proof}
First, it is straightforward to check that $u$ is magic. (See the proof of \cite[Theorem~6.2]{BBC07}.)

Secondly, we need to prove that $uA=Au$ and $uA_0=A_0u$. We will prove the statement with $\HGr_0$. The case for $\HGr$ is literally the same. So, we expand the left-hand and right-hand side first:
$$
A_0u=
\begin{pmatrix}
I&0&H^+&H^-\\
0&I&H^-&H^+\\
H^{+\rm T}&H^{-\rm T}&0&0\\
H^{-\rm T}&H^{+\rm T}&0&0
\end{pmatrix}
\begin{pmatrix}
p^+&p^-&0&0\\
p^-&p^+&0&0\\
0&0&q^+&q^-\\
0&0&q^-&q^+
\end{pmatrix}
=
\begin{pmatrix}
p^+&p^-&\heartsuit&\diamondsuit\\
p^-&p^+&\diamondsuit&\heartsuit\\
\clubsuit&\spadesuit&0&0\\
\spadesuit&\clubsuit&0&0
\end{pmatrix},
$$
\begin{align*}
\heartsuit&=H^+q^++H^-q^-,&\diamondsuit&=H^+q^-+H^-q^+,\\
\clubsuit&=H^{+\rm T}p^++H^{-\rm T}p^-,&\spadesuit&=H^{+\rm T}p^-+H^{-\rm T}p^+.
\end{align*}

Multiplying $uA_0$ looks the same with
\begin{align*}
\heartsuit&=p^+H^++p^-H^-,&\diamondsuit&=p^+H^-+p^-H^+,\\
\clubsuit&=q^+H^{+\rm T}+q^-H^{-\rm T},&\spadesuit&=q^+H^{-\rm T}+q^-H^{+\rm T}.
\end{align*}

So, we have four relations to prove. Let us look, for instance, on the heart relation.
\begin{align*}
H^+q^++H^-q^-&=\frac{1}{2}(J+H)q^++\frac{1}{2}(J-H)q^-=\frac{1}{2}J(q^++q^-)+\frac{1}{2}H(q^+-q^-)\\
&=\frac{1}{2}J+\frac{1}{2}Hq=\frac{1}{2}J+\frac{1}{2}pH=\dots=p^+H^++p^-H^-
\end{align*}
Here, we used the fact that $J(q^++q^-)=J=(p^++p^-)J$, which follows from orthogonality of $q$ and $p$.

All the other relations are proven in a similar way.
\end{proof}

\begin{prop}
Let $H$ be a Hadamard matrix and $\HGr_0$ the corresponding looped Hadamard graph. Then $\Aut^+\HGr_0=\Aut^+ H$.
\end{prop}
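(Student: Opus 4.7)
The inclusion $\Aut^+H\subset\Aut^+\HGr_0$ is already provided by Lemma~\ref{L.HadGr}, which yields a surjective Hopf $*$-algebra morphism $\pi\colon\Olg(\Aut^+\HGr_0)\twoheadrightarrow\Olg(\Aut^+H)$. The plan is to construct an inverse $\iota$ by extracting from the fundamental representation $v\in M_{4N}(\Olg(\Aut^+\HGr_0))$ a pair of cubic $N\times N$ matrices $p,q$ satisfying $p=HqH^{-1}$.

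I would first show $v$ splits as $v=v_r\oplus v_c$ according to the row/column partition. Writing $R$ for the indicator of the looped row vertices, the identity $A_0\mathbf{1}=R+N\mathbf{1}$ together with $v\mathbf{1}=\mathbf{1}$ and $vA_0=A_0v$ forces $vR=R$; positivity of magic-unitary entries then kills the off-diagonal row/column blocks. Next I would recover the $\pm$-pair structure: let $M_r$ be the involution on row vertices swapping $r_i^+\leftrightarrow r_i^-$. A direct count of common neighbours in $\HGr_0$ gives $(A_0^2)|_{r,r}=(\tfrac{N}{2}+1)I_{2N}+\tfrac{N}{2}J_{2N}-\tfrac{N}{2}M_r$, so the $4N\times 4N$ extension of $M_r$ is a polynomial in $A_0$, the row-projection $L$, and $RR^{\rm T}$. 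All three commute with $v$ (the latter two follow from $v=v_r\oplus v_c$ and $vR=R$), hence $v_r$ commutes with $M_r$, forcing $v_r=\begin{pmatrix}p^+ & p^- \\ p^- & p^+\end{pmatrix}$; the symmetric argument gives $v_c=\begin{pmatrix}q^+ & q^- \\ q^- & q^+\end{pmatrix}$.

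Setting $p:=p^+-p^-$ and $q:=q^+-q^-$, mutual orthogonality of entries within a row of $v_r$ yields $(p^i_j)^2=p^+_{ij}+p^-_{ij}$ and $p^i_jp^i_k=0$ for $j\neq k$, while $v_rv_r^{\rm T}=I$ gives $pp^{\rm T}=I$; hence $p$ (and similarly $q$) is cubic. Expanding the intertwining $v_r\cdot(A_0)_{\mathrm{rc}}=(A_0)_{\mathrm{rc}}\cdot v_c$ with $H^{\pm}=\tfrac12(J\pm H)$, the $J$-parts collapse because $p^++p^-$ and $q^++q^-$ are themselves magic, while the $H$-parts yield $pH=Hq$. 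By the universal description of $\Olg(\Aut^+H)$, this determines the Hopf $*$-algebra morphism $\iota\colon\Olg(\Aut^+H)\to\Olg(\Aut^+\HGr_0)$ sending the fundamental representation of $\Aut^+H$ to $q$.

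Finally, the recovery identities $p^{\pm}_{ij}=\tfrac12((p^i_j)^2\pm p^i_j)$ reduce $\pi\circ\iota=\id$ and $\iota\circ\pi=\id$ to a direct check on generators. The main obstacle is the second step above: extracting the pair structure purely from the quantum graph data, which hinges on the Hadamard orthogonality of the rows of $H$ producing the exact count $N/2$ of common neighbours for any two unpaired row vertices and thereby realising $M_r$ as a polynomial in the adjacency matrix.
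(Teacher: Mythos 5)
Your proposal is correct, and it follows the same overall architecture as the paper (one inclusion from Lemma~\ref{L.HadGr}, the other by decomposing the fundamental representation of $\Aut^+\HGr_0$ into the $2\times 2$ block form and extracting cubic matrices $p,q$ with $pH=Hq$ via the BBC07-style computation), but the two key intermediate reductions are obtained by genuinely different means. For the row/column splitting, the paper invokes the external fact that quantum automorphisms preserve cycle lengths (hence loops), whereas you derive $vR=R$ from $A_0\mathbf{1}=N\mathbf{1}+R$ and kill the off-diagonal blocks by positivity of the magic-unitary entries; this is more self-contained. For the pair structure $a=d$, $b=c$, the paper expands $uA_0=A_0u$ into eight block equations, cancels the $J$-terms using $uJ=J$, and reads off the identities, while you instead realise the pair-swap involution $M_r$ as a polynomial in $A_0$, the row projection and $RR^{\rm T}$ (using the common-neighbour count $N/2$ coming from Hadamard orthogonality -- i.e.\ exactly the distance-regularity of Hadamard graphs mentioned in the concluding section) and conclude by commutation. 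Your route costs a short computation of $(A_0^2)|_{r,r}$ but makes explicit \emph{why} the pair structure is forced, whereas the paper's route is shorter on the page but leans on the reader to extract $a=d$, $b=c$ from the remaining equations. The only points worth tightening in your write-up are routine: you also need the column version ($R^{\rm T}v=R^{\rm T}$, which follows from magicness) to see that $RR^{\rm T}$ commutes with $v$, and the column orthogonality relations $p^j_ip^k_i=0$ for cubicity, which follow symmetrically from the columns of $v_r$.
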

\begin{proof}
We already proved the inclusion $\supset$ in the previous lemma, so it remains to prove $\subset$. That is, we need to prove that $\Olg(\Aut^+\HGr_0)$ is a quotient of $\Olg(\Aut^+ H)$.

So, denote by $u$ the fundamental representation of $\Aut^+\HGr_0$. By \cite[Lemma 3.2.3]{Ful06}, quantum automorphisms preserve cycles of length $k$. In particular, it must preserve vertices with loops. That is, $u_{ij}=0$ if $i$-th vertex has a loop and $j$-th does not or the other way around. Consequently, $u$ has the form
$$u=
\begin{pmatrix}
a&b&0&0\\
c&d&0&0\\
0&0&\tilde a&\tilde b\\
0&0&\tilde c&\tilde d
\end{pmatrix},
$$
where $a$, $b$, $c$, $d$, $\tilde a$, $\tilde b$, $\tilde c$, $\tilde d$ are some $N\times N$ matrices.

Expanding the relation $uA=Au$, we get eight equations as follows
\begin{align*}
(a+b)J+(a-b)H&=J(\tilde a+\tilde c)+H(\tilde a-\tilde c)\\
(a+b)J-(a-b)H&=J(\tilde b+\tilde d)+H(\tilde b-\tilde d)\\
(c+d)J+(c-d)H&=J(\tilde a+\tilde c)-H(\tilde a-\tilde c)\\
&\dots
\end{align*}

From the definition of $\Aut^+\HGr$, we have that $u\eta=\eta$ and hence $uJ=J$ (as $J=\eta\eta^\dag$). Consequently, $(a+b)J=J$, $(c+d)J=J$ and so on, so we can cancel the first term on each side of each of the equation above. From what remains, it is easy to derive that $a=d$, $b=c$, $\tilde a=\tilde d$, $\tilde b=\tilde c$.

It is straightforward to check (see the proof of \cite[Theorem~6.2]{BBC07}) that $p:=a-b$ and $q:=\tilde a-\tilde b$ are cubic matrices. If we substitute this to one of the equations above, we get $pH=Hq$, which is the last relation we needed to derive.
\end{proof}

We say that a graph $\GGr$ \emph{has quantum symmetries} if $\Aut^+\GGr$ is ``larger'' than $\Aut\GGr$. More precisely, if $\Olg(\Aut^+\GGr)$ is non-commutative, so $\Aut^+\GGr$ is a proper quantum group.

\begin{prop}
For $N\ge 4$, considering a Hadamard matrix of size $N$, both the corresponding Hadamard graphs $\HGr$ and $\HGr_0$ have quantum symmetries.
\end{prop}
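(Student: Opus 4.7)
The plan is to show that $\Olg(\Aut^+\HGr_0)$ is non-commutative for $N \geq 4$, from which the same conclusion for $\Olg(\Aut^+\HGr)$ follows by a short diagram chase. The key observation is that the quantum automorphism group of a graph is a classical group precisely when the flip $\crosspart\colon\C^N\otimes\C^N\to\C^N\otimes\C^N$ is an intertwiner of $u^{\otimes 2}$, since the relation $\crosspart(u\otimes u)=(u\otimes u)\crosspart$ unpacks entrywise to commutativity of the entries $u^i_j$.

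First I would handle $\HGr_0$. The preceding proposition identifies $\Aut^+\HGr_0 = \Aut^+H$, whose representation category is $F_H(\NCBipartEven_N)$. The lemma immediately before Corollary~\ref{C.NCBQ} shows $\NCBipartEven_N(2,2)=\{\Paabb,\Pabba,\Gconnecter,\Gwhite\Gconnecter\}$, and the proposition preceding Corollary~\ref{C.NCBQ} computes that the Gram matrix of these four together with $\crosspart$ has determinant $N^3(N-1)^4(N-2)$, which is nonzero for $N \geq 4$. Extending $F_H$ to a fibre functor $\Bipart_N \to \Mat$ sending $\crosspart$ to the honest flip (legitimate by Proposition~\ref{P.FibreBipart}), this linear independence persists after applying the functor, so the flip does not lie in $F_H(\NCBipartEven_N(2,2))=\RCat_{\Aut^+\HGr_0}(2,2)$. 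Consequently $\Olg(\Aut^+\HGr_0)$ is non-commutative, so $\Aut^+\HGr_0$ is a proper quantum group.

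For the unlooped graph I would then observe that $A_0 = A + D$, where $D$ is the diagonal projection onto the row vertices. Any quantum automorphism of $\HGr_0$ must preserve the loops (1-cycles) and so commutes with $D$, hence also with $A = A_0 - D$, exhibiting it as a quantum automorphism of $\HGr$. On the level of function algebras this yields a surjection $\Olg(\Aut^+\HGr)\twoheadrightarrow\Olg(\Aut^+\HGr_0)$. Since any quotient of a commutative algebra is commutative, non-commutativity of the target forces non-commutativity of the source, and $\Aut^+\HGr$ is likewise a proper quantum group.

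There is no serious obstacle here: the argument is essentially a bookkeeping assembly of the Gram determinant computation, the identification $\Aut^+\HGr_0=\Aut^+H$, and the obvious quantum-subgroup inclusion $\Aut^+\HGr_0\subset\Aut^+\HGr$. The subtlest step is merely the legitimate extension of $F_H$ from $\NCBipart_N$ to $\Bipart_N$, which is immediate since the standard spider interpretation is symmetric so the relations \eqref{eq.blacksym} hold automatically.
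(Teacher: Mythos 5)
Your proposal is correct and follows essentially the same route as the paper: the non-commutativity comes from Corollary~\ref{C.NCBQ} (the Gram determinant computation showing the flip is not in $F_H(\NCBipartEven_N(2,2))$), and this is then transferred to the two graphs. The only cosmetic difference is that for the unlooped graph $\HGr$ you derive the inclusion $\Aut^+\HGr_0\subset\Aut^+\HGr$ from loop-preservation, whereas the paper simply cites Lemma~\ref{L.HadGr}, whose explicit magic unitary already commutes with both $A$ and $A_0$; both are valid.
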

\begin{proof}
By Corollary~\ref{C.NCBQ}, $\Aut^+ H$ is a proper quantum group. By Lemma~\ref{L.HadGr}, it acts on both $\HGr$ and $\HGr_0$.
\end{proof}

\begin{thm}\label{T.graphs}
Two Hadamard graphs or two looped Hadamard graphs are quantum isomorphic if and only if they have the same size
\end{thm}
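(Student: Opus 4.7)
The ``only if'' direction is immediate, since the number of vertices $4N$ of a Hadamard graph is obviously an isomorphism invariant (it can be recovered as the dimension of the fundamental representation inside the associated category).

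For the converse, my plan is to lift the quantum isomorphism of the underlying Hadamard matrices provided by Theorem~\ref{T.hadamard} to the corresponding (looped) Hadamard graphs by mimicking the construction of Lemma~\ref{L.HadGr}. So let $H,H'$ be Hadamard matrices of common size $N$. Theorem~\ref{T.hadamard} yields a $*$-algebra $\Alg$ generated by the entries of a cubic matrix $q\in M_N(\Alg)$ such that $p:=H'qH^{-1}$ is also cubic (equivalently, $pH=H'q$). Set
$$v=\begin{pmatrix} p^+ & p^- & 0 & 0 \\ p^- & p^+ & 0 & 0 \\ 0 & 0 & q^+ & q^- \\ 0 & 0 & q^- & q^+ \end{pmatrix}\in M_{4N}(\Alg),$$
with $p^\pm,q^\pm$ defined entry-wise as in Lemma~\ref{L.HadGr}. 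Since that lemma verifies the magic-unitary property using only cubicity of $p$ and $q$ (and not any relation between $p$ and $q$), the same argument applies here.

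It then remains to check the intertwining relations $vA=A'v$ and $vA_0=A_0'v$, where $A,A_0$ (resp.\ $A',A_0'$) are the adjacency matrices of $\HGr,\HGr_0$ (resp.\ $\HGr',\HGr_0'$). This is done by the exact same block-by-block computation as in Lemma~\ref{L.HadGr}, with the key identity upgraded from $H^+q^++H^-q^-=\tfrac12 J+\tfrac12 Hq=\tfrac12 J+\tfrac12 pH=p^+H^++p^-H^-$ to $(H')^+q^++(H')^-q^-=\tfrac12 J+\tfrac12 H'q=\tfrac12 J+\tfrac12 pH=p^+H^++p^-H^-$, in which the central equality $H'q=pH$ is precisely the quantum-isomorphism hypothesis. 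The diagonal $I$-blocks that distinguish $A_0$ from $A$ manifestly commute with $v$, because $v$ is block-diagonal with respect to the splitting of vertices into row-vertices and column-vertices.

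The main obstacle is essentially a matter of bookkeeping: one must fix conventions for the direction of the quantum isomorphism in Theorem~\ref{T.hadamard} so that the relation $pH=H'q$ really holds with $p,q$ in the correct positions of $v$. Once this is done, the proof is a direct transcription of Lemma~\ref{L.HadGr} with $H$ replaced by $H'$ on the appropriate side, and no new ideas are needed beyond those already in Sections~\ref{secc.hadamard} and \ref{secc.graphs}.
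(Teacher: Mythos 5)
Your proposal is correct and follows exactly the paper's own argument: the ``only if'' direction from invariance of the vertex count, and the ``if'' direction by taking the quantum isomorphism $pH=H'q$ from Theorem~\ref{T.hadamard} and transporting it to the graphs via the block matrix of Lemma~\ref{L.HadGr}, with the same primed block computation. No differences worth noting.
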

\begin{proof}
It is known that quantum isomorphisms preserve the number of vertices of a graph, so we have the left-right implication.

For the right-left implication, take two Hadamard matrices of the same size. By Theorem~\ref{T.hadamard}, there is a quantum isomorphism mapping $H$ to $H'$, so there is a pair of cubic matrices $p$ and $q$ with non-commutative entries such that $H'q=pH$. Now we define $u$ as in Lemma~\ref{L.HadGr} and show that $A'u=uA$ (or $A_0'u=uA_0$). The proof of this can be copied from Lemma~\ref{L.HadGr} (just adding the primes to one side of the equations).
\end{proof}

\begin{rem}
As we mentioned in the introduction, this theorem was at the same time independently proved by \cite{CM22}. Although their proof is written in a ``different language'', the crux of the proof is similar to ours. Instead of studying quantum isomorphisms of Hadamard matrices, they focus just on the graphs. The category associated to any graph can be modelled by all planar bilabelled graphs \cite{MR20}. For every graph $\GGr$, there is a fibre functor $F_\GGr$ given essentially by sending every edge to the adjacency matrix of $\GGr$. (The article \cite{CM22} does not refer to monoidal categories at all; instead the images of the diagrams under $F_\GGr$ are called \emph{scaffolds}.) They need to show that for any two Hadamard graphs of a given size, the associated categories are monoidally equivalent. The diagrammatic category given by all planar bilabelled graphs is not pure (otherwise all graphs of a given size would be mutually quantum isomorphic); nevertheless, to prove that two fibre functors have the same kernel, it is enough to show that they map closed diagrams the same way. This is exactly what they do in \cite{CM22}. In fact, they do it in a more general way and show that any two \emph{exactly triply regular association schemes} are quantum isomorphic if and only if they have the same \emph{Delta-Wye parameters}.
\end{rem}

\subsection{Quantum Hadamard matrices and graphs}

All the results of this section can be reformulated in a straightforward way to the case of quantum Hadamard matrices and graphs.

Given a quantum Hadamard matrix $H$ defined over a finite quantum space $X$, we define its quantum automorphism group $\Aut^+ H$ through the category $\RCat_{\Aut^+ H}=F_H(\langle\Gconnecter,\Gwhite\Gconnecter,\pairpart\rangle)$. More concretely, it is given by the $*$-algebra
$$\Olg(\Aut^+H)=\staralg(u_{ij}\mid\text{$u$ and $HuH^{-1}$ are $X$-cubic}).$$
Here, $u$ being $X$-cubic means that it is unitary, it satisfies $(u\otimes u)F_X(\uppairpart)=F_X(\uppairpart)$ (is $X$-orthogonal) and $(u\otimes u)F_X(\Gconnecter)=F_X(\Gconnecter)(u\otimes u)$, where $F_X$ denotes the fibre functor mapping $\Gfork\mapsto m$, $\Gsing\mapsto\eta^\dag$.

Two quantum Hadamard matrices $H$ and $H'$ over quantum spaces $X$ and $X'$ are called \emph{quantum isomorphic} if there is a unitary matrix $q$ with non-commutative entries which is $X$-$X'$-cubic and $p:=H'qH^{-1}$ is also $X$-$X'$-cubic. That is, the following relations are satisfied for both $u=p,q$
$$(u\otimes u)F_X(\uppairpart)=F_{X'}(\uppairpart),\quad (u\otimes u)F_X(\Gconnecter)=F_{X'}(\Gconnecter)(u\otimes u).$$

By the same argumentation as in Theorem~\ref{T.hadamard}, we obtain that
\begin{thm}
Two quantum Hadamard matrices $H$ and $H'$ are quantum isomorphic if and only if the underlying finite quantum spaces $X$ and $X'$ are quantum isomorphic (i.e.\ iff $\delta=\delta'$).
\end{thm}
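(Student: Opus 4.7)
The plan is to mirror the argument of Theorem~\ref{T.hadamard} verbatim, since the only ingredients used there that need quantum analogues already appear in the excerpt: Corollary~\ref{C.FibreNCBipart} gives the fibre functor $F_H \colon \NCBipartEven_{\delta^2} \to \Mat$ attached to any quantum Hadamard matrix $H$ on a quantum space $X$ with $\eta^\dag \eta = \delta^2$, and the category $\NCBipartEven_{\delta^2}$ is pure for every nonzero $\delta$. Purity together with Proposition~\ref{P.pure} forces every such fibre functor to have kernel equal to the ideal $\Neg$ of negligible morphisms, and this single observation powers both directions.

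For the forward direction, suppose $q \in \Mat(N,N') \otimes \Alg$ is an $X$--$X'$-cubic unitary implementing a quantum isomorphism $H \to H'$. The defining relation $(q \otimes q) F_X(\uppairpart) = F_{X'}(\uppairpart)$ combined with its adjoint forces the scalar $F_X(\pairpart \cdot \uppairpart) = \delta^2$ to coincide with $F_{X'}(\pairpart \cdot \uppairpart) = \delta'^2$, so $\delta = \delta'$. To upgrade this to quantum isomorphism of the underlying quantum spaces, I would apply the same purity argument to $\NCPart_{\delta^2}$: by Proposition~\ref{P.FibreNCPart1} the spaces $X$ and $X'$ correspond to fibre functors on $\NCPart_{\delta^2}$, these functors share the same kernel $\Neg$ by purity of $\NCPart_{\delta^2}$, and Theorem~\ref{T.Galois} converts the resulting monoidal isomorphism of representation categories, sending $m \mapsto m'$ and $\eta^\dag \mapsto \eta'^\dag$, into a bi-Galois object, i.e.\ a quantum isomorphism $X \to X'$.

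For the backward direction, assume $\delta = \delta'$. By Corollary~\ref{C.FibreNCBipart}, the two quantum Hadamard matrices induce fibre functors $F_H, F_{H'} \colon \NCBipartEven_{\delta^2} \to \Mat$ whose images are $\RCat_{\Aut^+ H}$ and $\RCat_{\Aut^+ H'}$. Purity of $\NCBipartEven_{\delta^2}$ and Proposition~\ref{P.pure} force $\ker F_H = \Neg = \ker F_{H'}$, so both images are canonically isomorphic, as rigid monoidal $\dagger$-categories, to the quotient $\NCBipartEven_{\delta^2}/\Neg$, with the isomorphism sending $F_H(\Gconnecter) \mapsto F_{H'}(\Gconnecter)$, $F_H({\Gwhite\Gconnecter}) \mapsto F_{H'}({\Gwhite\Gconnecter})$, and $F_H(\pairpart) \mapsto F_{H'}(\pairpart)$. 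Theorem~\ref{T.Galois} then produces a $*$-algebra $\Alg$ and a unitary matrix $q \in \Mat(N,N') \otimes \Alg$ intertwining these generators; by construction $q$ is $X$--$X'$-cubic, and writing $p := H'^{-1} q H$ one reads off the companion relations, giving the required quantum isomorphism of $H$ with $H'$.

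I do not expect a genuinely hard step: every ingredient is already in place. The one bookkeeping point worth some care is verifying that the companion matrix $p$ obtained from $q$ is automatically also $X$--$X'$-cubic. This falls out of Proposition~\ref{P.FunctorHad}, which expresses the white generator ${\Gwhite\Gconnecter}$ as a composite involving $\Gconnecter$ and the Hadamard morphism; the intertwiner relation for ${\Gwhite\Gconnecter}$ under $q$ then translates into the corresponding relation for $\Gconnecter$ under $p$, and symmetrically. Once this is checked, the equivalence of the two conditions in the theorem statement (quantum isomorphism of the matrices and $\delta = \delta'$) is complete, and, as noted in the forward direction, the latter is in turn equivalent to quantum isomorphism of the underlying finite quantum spaces.
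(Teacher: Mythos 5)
Your proposal is correct and follows essentially the same route as the paper, which itself just says ``by the same argumentation as in Theorem~\ref{T.hadamard}'': purity of $\NCBipartEven_{\delta^2}$ plus Proposition~\ref{P.pure} forces all fibre functors $F_H$ to share the kernel $\Neg$, and Theorem~\ref{T.Galois} converts the resulting monoidal isomorphism of images into a quantum isomorphism, with the forward direction reading $\delta^2$ off as $\pairpart\cdot\uppairpart$. Your write-up is in fact more detailed than the paper's, and correctly uses $\NCBipartEven$ where the proof of Theorem~\ref{T.hadamard} has a slip writing $\NCBipart$ (which is not pure).
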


Now for every quantum Hadamard matrix $H$, we can construct the matrices $H^\pm:=\frac{1}{2}(J\pm H)$, where $J=\eta\eta^\dag$. Since $H=H^*$, we also have $H^{\pm *}=H^\pm$. In addition,
$$H^\pm\bullet H^\pm=\frac{1}{4}(J\bullet J\pm J\bullet H\pm H\bullet J+H\bullet H)=H^\pm,$$
where we used the fact that $J\bullet A=A=A\bullet J$ for any $A$.

This is the quantum analogue of the property that the matrices $H^\pm$ consist only of zeros and ones. By the way, this means that we can consider them to encode two directed quantum graphs, where one is the complement of the other. But this is of course \emph{not} the Hadamard graph in the sense of the definition in the previous subsection.

In order to construct a quantum Hadamard graph, we need to introduce a new quantum space $Y:=X\sqcup X\sqcup X\sqcup X$, that is,
$$C(Y)=C(X)\oplus C(X)\oplus C(X)\oplus C(X)=\C^4\otimes C(X).$$

Now, we can define two quantum adjacency matrices $A,A_0\colon l^2(Y)\to l^2(Y)$ by the same formula as classically:
$$A=\begin{pmatrix}
0&0&H^+&H^-\\
0&0&H^-&H^+\\
H^{+\dag}&H^{-\dag}&0&0\\
H^{-\dag}&H^{+\dag}&0&0
\end{pmatrix}
,\qquad
A_0=\begin{pmatrix}
I&0&H^+&H^-\\
0&I&H^-&H^+\\
H^{+\dag}&H^{-\dag}&0&0\\
H^{-\dag}&H^{+\dag}&0&0
\end{pmatrix}.
$$

It is straightforward to check that both $A$ and $A_0$ satisfy $A\bullet A=A$, $A=A^*$, $A=A^\dag$, so they can be considered as undirected quantum graphs $\HGr$ and $\HGr_0$ on $Y$. In addition, $A$ also satisfies $A\bullet I=0$, where $I$ is the identity matrix, so it encodes a quantum graph with no loops. See \cite{GQGraph} for a definition of a quantum graph.

Now all the statements about Hadamard graphs must hold also for quantum Hadamard graphs since all the relations that are to be checked have exactly the same form. In particular, we have the following.

\begin{prop}
Let $H$ be a quantum Hadamard matrix and $\HGr$, $\HGr_0$ the corresponding quantum Hadamard graphs. Then $\Aut^+ H$ acts on both $\HGr$ and $\HGr_0$. Moreover, $\Aut^+\HGr_0=\Aut^+ H$.
\end{prop}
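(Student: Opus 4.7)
The plan is to transcribe the proofs of Lemma~\ref{L.HadGr} and of the classical proposition preceding it, observing that every step in them is diagrammatic and uses only the Frobenius-algebra structure of $C(X)$ together with the defining relations \eqref{eq.HadMor} of a (quantum) Hadamard morphism. Since these ingredients are available for any finite quantum space $X$ carrying a quantum Hadamard matrix $H$, the arguments transfer verbatim.

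For the action of $\Aut^+H$ on $\HGr$ and $\HGr_0$, let $q$ be the fundamental representation of $\Aut^+H$ and set $p:=HqH^{-1}$. I would define $p^\pm:=\tfrac12(p\bullet p\pm p)$, $q^\pm:=\tfrac12(q\bullet q\pm q)$, and assemble the same block matrix $u$ as in Lemma~\ref{L.HadGr}, now viewed as living in $M_4(\Olg(\Aut^+H))\otimes\mathrm{End}(l^2(X))$. The fact that $u$ is a magic unitary with respect to $Y=X\sqcup X\sqcup X\sqcup X$ is a purely diagrammatic consequence of the $X$-cubic relations on $p$ and $q$. The intertwining conditions $uA=Au$ and $uA_0=A_0u$ split into four blockwise identities, the representative one being
$$H^+q^++H^-q^- \;=\; p^+H^++p^-H^-.$$
Expanding $H^\pm=\tfrac12(J\pm H)$ and the definitions of $p^\pm,q^\pm$, the $J$-terms collapse via $J(q^++q^-)=J=(p^++p^-)J$, which holds because $q$ and $p$ are $X$-orthogonal (and hence preserve $\eta$, and consequently $J=\eta\eta^\dag$). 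What remains of the identity is exactly $Hq=pH$, which holds by construction. The other three blocks are handled symmetrically.

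For the equality $\Aut^+\HGr_0=\Aut^+H$, the previous paragraph already gives the inclusion $\Aut^+H\subset\Aut^+\HGr_0$. For the reverse, let $u$ be the fundamental representation of $\Aut^+\HGr_0$ acting on $l^2(Y)$. The first step is to show $u$ respects the splitting into row- and column-vertices, producing a block-diagonal form with $N\times N$ blocks $a,b,c,d$ and $\tilde a,\tilde b,\tilde c,\tilde d$, as in the classical proposition. Classically this is loop-preservation via \cite[Lemma 3.2.3]{Ful06}; diagrammatically it follows from $u$ intertwining $A_0\bullet I$ (the matrix picking out the looped vertices), which is a formal consequence of $u$ intertwining $A_0$ and coacting on $C(Y)$ by an $*$-homomorphism. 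Expanding $uA_0=A_0u$ and cancelling the $J$-terms using $uJ=J$ (which follows from $u\eta=\eta$) forces $a=d$, $b=c$, $\tilde a=\tilde d$, $\tilde b=\tilde c$. Setting $p:=a-b$ and $q:=\tilde a-\tilde b$, the same block computation verifies that $p$ and $q$ are $X$-cubic and satisfy $pH=Hq$, producing the required homomorphism $\Olg(\Aut^+H)\to\Olg(\Aut^+\HGr_0)$ inverse to the surjection from the first half.

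The main obstacle is the loop-preservation step in the quantum setting: one has to check that the classical statement "quantum automorphisms preserve vertices carrying loops" survives the passage to the quantum graph $\HGr_0$ on $Y$. The rest of the argument is a routine diagrammatic translation and involves no new ideas beyond those already used in Lemma~\ref{L.HadGr}, so once the loop-preservation is cleanly written down, the remaining calculations are mechanical bookkeeping on $2\times 2$ blocks.
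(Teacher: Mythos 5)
Your proposal is correct and follows essentially the same route as the paper, which itself only remarks that the classical proofs of Lemma~\ref{L.HadGr} and the looped-graph proposition transfer verbatim because all the relations to be checked have the same form. Your replacement of the loop-preservation step (citing $u$ commuting with $A_0\bullet I$, which follows formally from $u(A\bullet B)=(uA)\bullet(uB)$-type manipulations for a $Y$-magic unitary, rather than the classical cycle-preservation lemma of \cite{Ful06}) is exactly the one detail the paper glosses over, and your argument for it is valid.
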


In this case, the action goes via the $Y$-magic unitary
$$u=\begin{pmatrix}
p^+&p^-&0&0\\
p^-&p^+&0&0\\
0&0&q^+&q^-\\
0&0&q^-&q^+
\end{pmatrix},\qquad
\begin{matrix}
p^\pm=\frac{1}{2}(p\bullet p\pm p),\\
q^\pm=\frac{1}{2}(q\bullet q\pm q),\\
\end{matrix}
$$
where $q$ is the fundamental representation of $\Aut^+H$, $p=HqH^{-1}$.

\begin{thm}
Two quantum Hadamard graphs are quantum isomorphic if and only if the underlying finite quantum spaces $X$ and $X'$ are quantum isomorphic (i.e.\ iff $\delta=\delta'$).
\end{thm}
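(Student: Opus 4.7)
The plan is to follow the same two-step strategy as in Theorem~\ref{T.graphs}, lifted to the quantum setting.

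For the forward implication, I would observe that any quantum isomorphism of quantum graphs in particular yields a quantum isomorphism of the underlying quantum spaces: the representation category $\RCat_{\Aut^+\HGr}$ is generated by the multiplication $m$ and the counit $\eta^\dag$ of $C(Y)$ together with the adjacency matrix, so a monoidal equivalence sending generators to generators restricts to the $\langle m,\eta^\dag\rangle$-subcategory and hence witnesses a quantum isomorphism $Y \to Y'$. Applied to $Y = X\sqcup X\sqcup X\sqcup X$ and the analogous $Y'$, this forces $\delta_Y = \delta_{Y'}$, hence $\delta = \delta'$, so $X$ and $X'$ are quantum isomorphic.

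For the converse, assume $\delta = \delta'$. By the immediately preceding theorem on quantum Hadamard matrices applied to $H$ and $H'$, there exists a $*$-algebra $\Alg$ generated by the entries of an $X$-$X'$-cubic matrix $q$ such that $p := H'qH^{-1}$ is also $X$-$X'$-cubic and satisfies $pH = H'q$. Mirroring the block formula used in Lemma~\ref{L.HadGr} and in the proposition on $\Aut^+\HGr_0$ just above, I would set
\[
u=\begin{pmatrix}p^+&p^-&0&0\\ p^-&p^+&0&0\\ 0&0&q^+&q^-\\ 0&0&q^-&q^+\end{pmatrix},\qquad p^\pm := \tfrac{1}{2}(p\bullet p\pm p),\quad q^\pm := \tfrac{1}{2}(q\bullet q\pm q),
\]
and claim it is a $Y$-$Y'$-magic unitary intertwining the adjacency matrices of $\HGr$ with $\HGr'$ (and simultaneously of $\HGr_0$ with $\HGr_0'$), thereby realizing the required quantum isomorphism.

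The remaining work splits into two routine checks. That $u$ is a $Y$-$Y'$-magic unitary follows from the cubic relations for $p$ and $q$ by the same computation as in \cite{BBC07} and in the preceding proposition; the only change is the systematic replacement of entrywise multiplication by the Schur product $\bullet$ in the definitions of $p^\pm, q^\pm$. That $A'u = uA$ and $A_0' u = uA_0$ splits after block expansion into four identities of the form $\heartsuit,\diamondsuit,\clubsuit,\spadesuit$ exactly as in Lemma~\ref{L.HadGr}; each follows from the single quantum input $pH = H'q$ together with the unit-preservation identities $(p^++p^-)\bullet J = J = J\bullet(q^++q^-)$. The main obstacle I anticipate is purely notational, namely carefully distinguishing composition of maps on $l^2(X)$ from the Schur product and from occurrences of $J = \eta\eta^\dag$; since, however, every identity has the same diagrammatic form as in the classical case, no substantively new argument should be required.
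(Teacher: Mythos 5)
Your proposal matches the paper's argument: the paper proves this theorem simply by observing that every relation to be checked has the same diagrammatic form as in the classical case, so the proofs of Lemma~\ref{L.HadGr} and Theorem~\ref{T.graphs} carry over with the Schur product and $J=\eta\eta^\dag$ in place of entrywise multiplication and the all-one matrix --- exactly the route you take. One small correction to the detail you flag yourself as delicate: the identity needed in the $\heartsuit$-type computations is $J(q^++q^-)=J=(p^++p^-)J$ with \emph{composition} (it follows from the $X$-orthogonality of $q$ and $p$ together with $mR=\eta$), not the Schur-product identity $(p^++p^-)\bullet J=J$, which merely restates that $J$ is the unit for $\bullet$ and returns $p\bullet p$ rather than $J$.
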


\section{Concluding remarks and open problems}
\label{sec.conclusion}

First of all, let us mention that this article answers most of the open questions we raised in \cite{GD4}. We
\begin{itemize}
\item found a lot of fibre functors for $\NCBipartEven_N$ and hence answered Question 5.4;
\item described the structure of $\NCBipartEven_{\delta^2}$ as a product $\NCPair_\delta\tiltimes\NCPair_\delta$ and hence answered Question 5.7 solving Conjectures 5.8, 5.9;
\item this also answers the semisimplicity questions 5.5 and 5.6 as we know the answers for $\NCPair_\delta$;
\item we interpreted the quantum groups as quantum symmetries of Hadamard graphs, which answers Question 5.12.
\end{itemize}

Nevertheless, our work also raises new questions. First, did we found all the fibre functors?

\begin{quest}
Is there a fibre functor $\NCBipartEven_N\to\Mat$ that does not factor through $\NCHad_N$?
\end{quest}

Secondly, we showed that for $N=4$, the corresponding quantum group is actually isomorphic to $SO_4^{-1}$. Can we say something more about the structure of the other quantum groups?

\begin{quest}
What is the structure of the quantum groups corresponding to $\NCBipartEven_N$?
\end{quest}

This is particularly interesting from the following viewpoint: Non-equivalent Hadamard matrices often have non-isomorphic automorphism groups. Therefore, their quantum automorphism groups must be non-isomorphic as well. But we showed that they are monoidally equivalent. So, this brings many examples of monoidal equivalences among quantum groups. Can we describe them in some systematical manner?

Regarding the quantum isomorphism of Hadamard matrices or Hadamard graphs, we proved that it exists, but we did not find it explicitly. In particular, it is not clear, whether the quantum isomorphism can be realized via a finite-dimensional algebra in the following sense:\footnote{This is sometimes called being \emph{quantum tensor isomorphic}. Actually, in the quantum information literature, the finite-dimensionality assumption is often quite important. Hence, this notion is often referred to as the \emph{quantum isomorphism} and our original definition is called the \emph{quantum commuting isomorphism}. Compare with \cite{AMR19}.}

\begin{quest}
Given some two Hadamard matrices $H$, $H'$ of size $N$, is there a finite-dimensional Hilbert space $\mathscr{H}$ and a set of linear operators $q_{ij}$, $i,j=1,\dots,N$ on $\mathscr{H}$ such that $q=(q_{ij})$ is cubic and $p:=H'qH^{-1}$ is cubic?
\end{quest}

Finally, we introduced quantum Hadamard graphs, but did not study them much. They certainly deserve more attention. Most importantly, we should look for more examples. It is not clear, for which quantum spaces a Hadamard matrix can exist. We only have them for
\begin{itemize}
\item $X$ being a classical space of size $4n$ (this is actually also open, whether there is one for every $n$ -- the famous Hadamard conjecture),
\item $X=M_n$ the quantum space of $n\times n$ matrices -- here we have the transposition example (Example~\ref{E.transpose}),
\item tensor product of the above constructions.
\end{itemize}

\begin{quest}
Find more examples of quantum Hadamard matrices. Is there one for $X$ not being of the form above? In particular, is there one for a finite quantum space with non-tracial state $\eta^\dag$?
\end{quest}

Classically, Hadamard graphs with $4N$ vertices (corresponding to a Hadamard matrix of size $N$) are exactly the distance-regular graphs with intersection array $(N,N-1,N/2,1;1,N/2,N-1,N)$ \cite[Section~1.8]{BCN89}.

\begin{quest}
Is there a similar abstract characterization of quantum Hadamard graphs?
\end{quest}

\bibliographystyle{halpha}
\bibliography{mybase}

\end{document}